\documentclass[11pt]{amsart}

\usepackage[OT2, T1]{fontenc}
\usepackage{url}
\usepackage{amsmath}
\usepackage{array}
\usepackage{graphicx}
\usepackage{amsfonts}
\usepackage{amssymb}
\usepackage{amstext}
\usepackage{amsthm}
\usepackage{hyperref}
\usepackage{colonequals}
\usepackage{enumitem}
\usepackage[alphabetic,lite]{amsrefs}
\usepackage{cleveref}
\usepackage[all,cmtip]{xy}
\usepackage{fullpage}
\usepackage{appendix}
\usepackage{scalerel}
\usepackage{mathrsfs}
\usepackage{comment}

\setcounter{totalnumber}{3}
\setcounter{topnumber}{1}
\setcounter{bottomnumber}{3}
\setcounter{secnumdepth}{3}

\numberwithin{equation}{section}

\newtheorem{theorem}{Theorem}
\newtheorem{lemma}[theorem]{Lemma}
\newtheorem{proposition}[theorem]{Proposition}
\newtheorem{corollary}[theorem]{Corollary}

\newtheorem{conj}[theorem]{Conjecture}

\theoremstyle{definition}
\newtheorem{defn}[theorem]{Definition}

\newtheorem{para}[theorem]{}

\theoremstyle{remark}
\newtheorem{remark}[theorem]{Remark}

\newtheorem*{claim}{Claim}

\newcommand{\pdiv}{\mathscr{G}}
\newcommand{\bA}{\mathbb{A}}

\newcommand{\bC}{\mathbb{C}}
\newcommand{\bD}{\mathbb{D}}

\newcommand{\bF}{\mathbb{F}}
\newcommand{\bG}{\mathbb{G}}
\newcommand{\bH}{\mathbb{H}}
\newcommand{\bI}{\mathbb{I}}

\newcommand{\bK}{\mathbb{K}}
\newcommand{\bL}{\mathbb{L}}
\newcommand{\bN}{\mathbb{N}}

\newcommand{\bQ}{\mathbb{Q}}
\newcommand{\bR}{\mathbb{R}}
\newcommand{\bS}{\mathbb{S}}

\newcommand{\bZ}{\mathbb{Z}}


\newcommand{\bbB}{\mathbf{B}}

\newcommand{\bbH}{\mathbf{H}}

\newcommand{\bbL}{\mathbf{L}}

\newcommand{\bbQ}{\mathbf{Q}}

\newcommand{\cA}{\mathcal{A}}

\newcommand{\cD}{\mathcal{D}}
\newcommand{\cE}{\mathcal{E}}

\newcommand{\cL}{\mathcal{L}}
\newcommand{\cM}{\mathcal{M}}

\newcommand{\cO}{\mathcal{O}}

\newcommand{\cS}{\mathcal{S}}

\newcommand{\cV}{\mathcal{V}}

\newcommand{\cZ}{\mathcal{Z}}

\newcommand{\Sh}{\mathcal{S}}

\newcommand{\fe}{\mathfrak{e}}

\newcommand{\fp}{\mathfrak{p}}

\newcommand{\F}{F_{\infty}}

\newcommand{\univ}{\textrm{univ}}
\newcommand{\et}{{\text{\'et}}}
\newcommand{\dR}{{\mathrm{dR}}}
\newcommand{\cris}{{\mathrm{cris}}}

\newcommand{\thh}{^{\text {th}}}

\newcommand{\la}{\langle}
\newcommand{\ra}{\rangle}
\newcommand{\Imin}{\bI^{\min}}
\newcommand{\BB}{{\mathrm{BB}}}

\DeclareMathOperator{\GL}{GL}

\DeclareMathOperator{\GSp}{GSp}

\DeclareMathOperator{\SO}{SO}
\DeclareMathOperator{\GSpin}{GSpin}

\DeclareMathOperator{\Spf}{Spf}
\DeclareMathOperator{\Mp}{Mp}
\DeclareMathOperator{\mt}{mt}
\DeclareMathOperator{\err}{err}

\DeclareMathOperator{\End}{End}
\DeclareMathOperator{\Hom}{Hom}

\DeclareMathOperator{\Res}{Res}
\DeclareMathOperator{\Spec}{Spec}

\DeclareMathOperator{\Span}{Span}
\DeclareMathOperator{\der}{der}
\DeclareMathOperator{\Fil}{Fil}
\DeclareMathOperator{\disc}{disc}

\DeclareMathOperator{\Id}{Id}

\DeclareMathOperator{\rk}{rk}
\DeclareMathOperator{\tor}{tor}
\DeclareMathOperator{\bb}{bb}
\DeclareMathOperator{\grph}{graph}
\DeclareMathOperator{\Frob}{Frob}
\DeclareMathOperator{\gr}{gr}
\DeclareMathOperator{\im}{im}

\usepackage[usenames,dvipsnames]{color}  

\begin{document}

\title{Picard ranks of K3 surfaces over function fields and the Hecke orbit conjecture}

\author{Davesh Maulik}
\author{Ananth N. Shankar}
\author {Yunqing Tang}

\maketitle

\numberwithin{theorem}{section}
\begin{abstract}

Let $\mathscr{X} \rightarrow C$ be a non-isotrivial and generically ordinary family of K3 surfaces over a proper curve $C$ in characteristic $p \geq 5$.  We 
prove that the geometric Picard rank jumps at infinitely many closed points of $C$.
 More generally, suppose that we are given the canonical model of a Shimura variety $\Sh$ of orthogonal type, associated to a lattice of signature $(b,2)$ that is self-dual at $p$.  We prove that any generically ordinary proper curve $C$ in $\Sh_{\overline{\bF}_p}$ intersects \emph{special divisors} of $\Sh_{\overline{\bF}_p}$ at infinitely many points.
 As an application, we prove the ordinary Hecke orbit conjecture of Chai--Oort in this setting; that is, we show that ordinary points in $\Sh_{\overline{\bF}_p}$ have Zariski-dense Hecke orbits.  We also deduce the ordinary Hecke orbit conjecture for certain families of unitary Shimura varieties.

\end{abstract}
\section{Introduction}
\subsection{Families of K3 surfaces}

Given a family of complex K3 surfaces, it is a well-known fact that the Picard ranks of the fibers will jump at infinitely many special points, so long as the family is not isotrivial.
More precisely, let us recall the following Hodge-theoretic result, due to Green \cite{Voisin} and Oguiso \cite{Oguiso}.
Let $\Delta$ be the unit disc in $\bC$, and let 
$\mathscr{X} \rightarrow \Delta$ be a non-isotrivial family of (compact) K3 surfaces. If $\rho$ denotes the minimal Picard rank of $\mathscr{X}_s$, $s\in \Delta$, then the set of points $t\in \Delta$ for which the Picard rank of $\mathscr{X}_t$ is greater than $\rho$ is a countable, dense subset of $\Delta$.  In particular, there are infinitely many such points.

In positive characteristic, this question is more subtle. Suppose we are given
$\mathscr{X}\rightarrow C$, where $C/\overline{\bF}_p$ is a curve and $\mathscr{X}$ is a non-isotrivial family of K3 surfaces.  It is now no longer the case that the Picard rank has to jump at infinitely many points of $C$.  
For example, there exist families where every fiber $X_t$ is a supersingular K3 surface, in which case the rank is always $22$.  By studying families of non-ordinary Kummer surfaces, one can produce families where the Picard rank jumps, but only at finitely many points ((see \ref{ss_intro_ss} for an example).

In both of these examples, the generic fiber is not \emph{ordinary}.  The first main result of this paper shows that, under additional hypotheses, if the generic fiber is ordinary, then there will be infinitely many points where the Picard rank jumps.  That is, we show the following:



\begin{theorem}\label{thm_K3}
Let $C/\overline{\bF}_p$ denote a smooth proper curve where $p \geq 5$ is a prime number, and let $\mathscr{X}\rightarrow C$ denote a generically ordinary non-isotrivial family of K3 surfaces. Suppose that the discriminant\footnote{Note that the Picard lattice of a K3 surface is equipped with a non-degenerate quadratic form arising from the intersection pairing. The discriminant of the Picard lattice is defined to be the discriminant of this quadratic form.} of the generic Picard lattice is prime to $p$. Then there exist infinitely many points $c\in C(\overline{\bF}_p)$ such that the Picard rank of $\mathscr{X}_c$ is greater than the generic Picard rank of $\mathscr{X}$. 
\end{theorem}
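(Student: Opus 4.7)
The plan is to deduce Theorem \ref{thm_K3} from the main theorem of the paper about special divisors on orthogonal Shimura varieties, using the period map for K3 surfaces. The heart of the work will already have been done in the Shimura-variety setting, so the proof of the K3 statement is essentially a translation.

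First I would set up the relevant orthogonal Shimura datum. Let $\rho$ be the generic Picard rank of $\mathscr{X}\to C$, let $P$ be the generic Picard lattice, and let $T = P^\perp$ be its orthogonal complement in the K3 lattice $L_{K3}$, which is even and unimodular of signature $(3,19)$. Since $L_{K3}$ is unimodular, the discriminant of $T$ equals that of $P$ up to sign, so by hypothesis $T$ is self-dual at $p$; its signature is $(20-\rho,2)$. Let $\Sh$ denote the canonical model of the associated Shimura variety of orthogonal type from the abstract.

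Next, I would construct the map $C \to \Sh_{\overline{\bF}_p}$. After passing to a finite étale cover of $C$ to install the needed lattice-level structure (which does not affect the conclusion), the Kuga--Satake construction applied to the primitive cohomology orthogonal to the generic Picard lattice produces an integral period map $\varphi \colon C \to \Sh_{\overline{\bF}_p}$. Non-isotriviality of $\mathscr{X}\to C$ forces $\varphi$ to be non-constant, because a constant period point would recover $\mathscr{X}_c$ as a constant family via the Torelli theorem for K3 surfaces (with the given polarization data). The hypothesis that the generic fiber is ordinary translates, via the Kuga--Satake correspondence matching the Newton stratifications, into generic ordinarity of the image $\varphi(C)$ in $\Sh_{\overline{\bF}_p}$. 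Finally, Picard rank jumping at a point $c$ is equivalent to the appearance of a new algebraic class in $\mathscr{X}_c$ not coming from $P$; such a class corresponds to a vector $v \in T$ that becomes of Hodge type $(1,1)$ at $c$, which is precisely the condition that $\varphi(c)$ lies on a special (Noether--Lefschetz) divisor of $\Sh_{\overline{\bF}_p}$. Applying the main theorem to $\varphi(C)$ then yields infinitely many such $c$.

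The main obstacle is not in the above dictionary (which is fairly standard) but in verifying that all components of the reduction actually work integrally at $p$ and over $\overline{\bF}_p$: namely, that the integral Kuga--Satake period map extends over all of $C$ with values in $\Sh$ (rather than a toroidal compactification or a variant where $T$ fails to be self-dual), that the ordinarity of a K3 surface matches ordinarity at the corresponding point of $\Sh$, and that an intersection with a special divisor really does force a new line bundle to lift crystalline-to-Picard, which requires invoking results on the Tate conjecture for K3 surfaces (or equivalently the lifting of algebraic classes from crystalline cohomology). The substantive new content --- producing infinitely many special-divisor intersections along a generically ordinary proper curve --- is the province of the main theorem and is not touched by this reduction.
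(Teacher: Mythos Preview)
Your proposal is correct and matches the paper's approach exactly: the paper simply asserts that Theorem~\ref{thm_int} ``directly implies'' Theorem~\ref{thm_K3}, and your write-up spells out precisely this reduction via the Kuga--Satake period map for the transcendental lattice $T$, together with the standard dictionary (non-isotrivial $\Rightarrow$ non-constant period map, ordinarity matches, Picard jump $\Leftrightarrow$ special divisor, Tate conjecture to lift the crystalline class). Two small points worth tightening: you should note that the image of $C$ is not contained in any $Z(m)$ (otherwise the generic Picard rank would exceed $\rho$), and that the hypothesis $b\geq 3$ in Theorem~\ref{thm_int} covers $\rho\leq 17$, while the residual cases $b=1,2$ are handled by \cite{MST} (as the paper remarks) or are elementary.
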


Broadly speaking, Theorem \ref{thm_K3} is proved by studying moduli spaces of K3 surfaces, viewed as (canonical integral models of) GSpin Shimura varieties $\Sh$ associated to quadratic $\bZ$-lattices $(L,Q)$ having signature $(b,2)$. These Shimura varieties admit families of ``special divisors'', which are themselves GSpin Shimura varieties associated to sublattices of $(L,Q)$ having signature $(b-1,2)$, whose points parameterize K3 surfaces with Picard rank greater than those parameterized by ``generic points'' of the ambient Shimura variety. 

The notion of special divisors makes sense in the more general setting of GSpin Shimura varieties $\Sh$ associated to quadratic lattices $(L,Q)$ having signature $(b,2)$ for all positive integers $b$. For every positive integer $m$, there exists a divisor $\cZ(m) \subset \Sh$ which, if not empty, is also (the integral model of) a GSpin Shimura variety. We prove the following theorem which directly implies Theorem \ref{thm_K3}.

\begin{theorem}\label{thm_int}
Let $\Sh$ denote the canonical integral model over $\bZ_p$ of the GSpin Shimura variety associated to a quadratic $\bZ$-lattice $(L,Q)$ of signature $(b,2)$, such that $p$ does not divide the discriminant of $(L,Q)$. Assume that $b\geq 3, p\geq 5$. Let $C$ be an irreducible smooth proper curve with a finite morphism $C\rightarrow \Sh_{\bar{\bF}_p}$ such that the generic point of $C$ is ordinary and that the image of $C$ does not lie in any special divisors $Z(m):=\cZ(m)_{\bar{\bF}_p},m\in \bZ_{>0}$. Then there exist infinitely many $\bar{\bF}_p$-points on $C$ which lie $\displaystyle\cup_{m\in \bN, p\nmid m}Z(m)$.
\end{theorem}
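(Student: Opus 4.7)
The plan is to argue by contradiction and compare global and local intersection-theoretic estimates. Suppose $S := C \cap \bigcup_{m\geq 1,\,p\nmid m} Z(m)$ is finite. Then
\[
\Sigma(N) \;:=\; \sum_{\substack{m \leq N \\ p\nmid m}} C \cdot Z(m) \;=\; \sum_{s \in S} \sum_{\substack{m\leq N \\ p\nmid m}} (C\cdot Z(m))_s,
\]
and I would produce a global lower bound for $\Sigma(N)$ together with matching local upper bounds on each summand on the right, which become incompatible for $N$ sufficiently large.

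For the global lower bound, the main input is the modularity of the generating series of special divisors, due to Borcherds and Bruinier in characteristic zero and extended to good integral models of GSpin Shimura varieties by Howard--Madapusi Pera. Under the hypothesis $p\nmid \disc(L,Q)$, the integral model $\Sh$ is smooth over $\bZ_p$ and each $\cZ(m)$ is a flat Cartier divisor when $p\nmid m$, so the intersection numbers $C \cdot Z(m)$ agree with those computed on a characteristic-zero lift. After normalizing the constant term by $\deg_C \omega$, where $\omega$ is the Hodge bundle on $\Sh$, the series $\sum_{m \geq 0}(C\cdot Z(m))\, q^m$ becomes a non-zero holomorphic modular form of weight $1 + b/2$ (non-zero because $\omega$ is ample on the minimal compactification and $C \to \Sh_{\bar{\bF}_p}$ is finite, so $\deg_C \omega > 0$). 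A classical Fourier-coefficient bound, together with a sieve removing the $p\mid m$ terms, then yields $\Sigma(N) \gg N^{1 + b/2}$.

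For the local upper bound at an ordinary $s \in S$, I would use the Serre--Tate/formal-torus description of the completion of $\Sh$ at $s$, provided for GSpin Shimura varieties by Howard--Madapusi Pera: the completed local ring is $W(\bar{\bF}_p)[[t_1, \ldots, t_b]]$ with canonical coordinates. In these coordinates, the formal completion of $\cZ(m)$ decomposes as a locally finite union of Cartier divisors indexed by special endomorphisms of norm $m$, each cut out by an explicit power series. Since $C$ is not contained in any special divisor, the order of vanishing of each such equation along the formal curve $\widehat{C}_s$ is finite and controlled by the contact order of $C$ with the special-divisor locus near $s$. Combining this with Eichler/Minkowski-type mass formulae for special endomorphisms of norm $\leq N$ gives $\sum_{m \leq N,\,p\nmid m}(C \cdot Z(m))_s = O(N^{c})$ for some $c < 1 + b/2$. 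Since $C$ is generically ordinary, only finitely many of its points are non-ordinary; at such points I would replace Serre--Tate by the Rapoport--Zink uniformization of the local GSpin Shimura datum developed by Howard--Pappas to obtain an analogous estimate.

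The central technical difficulty will be the local bound at the non-ordinary points of $C$, and especially at supersingular ones, where the Rapoport--Zink space is neither toric nor smooth in general. The hypothesis $p\nmid m$ is indispensable here: it ensures that the special endomorphisms defining $\cZ(m)$ remain prime-to-$p$ and that their deformation spaces admit a sufficiently rigid description, ruling out pathologies like the supersingular Kummer example from the introduction where entire Newton strata lie inside $\bigcup_{p \mid m} Z(m)$. The combined assumptions $p \geq 5$, $p \nmid \disc(L,Q)$, and $b\geq 3$ serve respectively to guarantee smoothness of the integral model and of each $\cZ(m)$; flatness of $\cZ(m)$ so that characteristic-$p$ and characteristic-zero intersection numbers agree; and the strict dominance $1 + b/2 > (b+1)/2$ needed to close the global-versus-local comparison.
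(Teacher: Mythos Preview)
Your overall architecture---assume finitely many points, compare a global lower bound for $\sum_{m\le N,\,p\nmid m} C\cdot Z(m)$ coming from modularity against a sum of local upper bounds---is exactly the paper's strategy, and the global step is correct.

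The genuine gap is at supersingular points. You write that Rapoport--Zink uniformization will give ``an analogous estimate'' of the form $O(N^c)$ with $c<1+b/2$. This is false. At a supersingular $P$, the lattice $L_1$ of special endomorphisms of $A_P[p^\infty]$ has rank $b+2$, so the theta series attached to $L_1$ is a weight $1+b/2$ modular form whose Eisenstein part already gives $\#\{v\in L_1:Q(v)=m\}\asymp m^{b/2}$. Since the local multiplicity is $i_P(C\cdot Z(m))=\sum_{n\ge 1}\#\{v\in L_n:Q(v)=m\}$, even the $n=1$ term alone contributes $\asymp N^{1+b/2}$ to $\sum_{m\le N} i_P(C\cdot Z(m))$: the local and global quantities have the \emph{same} order of magnitude. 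The paper's introduction (\S1.4) makes this explicit, and the example of a non-ordinary Kummer family shows that without further input the supersingular contribution can exactly account for the entire global intersection number. What the paper actually proves is that the \emph{constant} in front of $N^{1+b/2}$ on the local side is strictly less than $1$ times the global constant; this is the content of the decay lemmas (Theorems~\ref{cor_decay_sg} and~\ref{cor_decay_ssp}), which require the explicit crystalline computations of \S\S\ref{sec_three}--\ref{sec_decay_ssp} and the comparison of Eisenstein coefficients in Lemma~\ref{den_sm} and Proposition~\ref{ssmain}. None of this is a consequence of Rapoport--Zink uniformization alone.

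A secondary but important point: even at non-supersingular points your sketch understates the argument. The ``contact order'' of $C$ with a branch of $Z(m)$ is not bounded independently of $m$; the formal-curve example in \S\ref{formal_example} shows it can grow without bound. What saves the non-supersingular case is a bootstrap: the global bound $i_P(C\cdot Z(m))\le C\cdot Z(m)\ll m^{b/2}$ forces the first successive minimum of $L_n$ to satisfy $\mu_1(n)\gg n^{1/b}$ (Lemma~\ref{firstmin}), and combined with $\rk L_n\le b$ this yields $\sum_{m\le X} i_P(C\cdot Z(m))=O(X^{b/2}\log X)$ via a geometry-of-numbers count. Serre--Tate coordinates by themselves do not give this; the algebraicity of $C$ enters through the global height bound. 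Finally, the role of $p\nmid m$ is not about rigidity of deformation spaces but about controlling the local density $\delta(p,L'_n,m)$ in Lemma~\ref{den_sm}; it is precisely what makes the Eisenstein-coefficient comparison tractable.
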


In the case of $\cS$ being Hilbert modular surfaces or Siegel modular threefold (with $b=2,3$ respectively), \Cref{thm_int} follows from our earlier paper \cite{MST}, but the general setting considered here requires additional techniques.

\subsection{The Hecke orbit conjecture.}

The second goal of this paper is to apply Theorem \ref{thm_int} to the study of Hecke orbits in characteristic $p$.  In general, Shimura varieties are naturally equipped with a set of correspondences, known as Hecke correspondences. Roughly speaking, these Hecke correspondences permute\footnote{The special divisors $Z(d)$ and $Z(m^2d)$ are in the same Hecke orbit.} the set of special divisors. In characteristic zero, the dynamics of Hecke correspondences are well-understood.  For example, work of Clozel--Oh--Ullmo \cite{COU} proves that the Hecke orbit of a point equidistributes in the analytic topology. However, in characteristic $p$, the behavior of the Hecke orbit of a point is still far from understood. 

The first result along these lines is due to Chai \cite{Chai95}, who proved that the prime-to-$p$ Hecke orbit of an ordinary point is Zariski dense in $\cA_{g,\bF_p}$, the moduli space of principally polarized abelian varieties over $\bF_p$. Guided by this, Chai and Oort have the following more general conjecture for arbitrary Shimura varieties. 
\begin{conj}[Chai--Oort]\label{conjHO}
Let $\Sh$ denote the canonical integral model of a Shimura variety of Hodge type (with hyperspecial level) and let $\Sh_{\overline{\bF}_p}$ denotes its special fiber.\footnote{The conjecture was made in the PEL case, but is expect to hold for Hodge type Shimura varieties too, which includes the case of PEL Shimura varieties.} Then the prime-to-$p$ Hecke orbit of a $\mu$-ordinary point is Zariski dense in $\Sh_{\overline{\bF}_p}$.\footnote{Being $\mu$-ordinary means that this point lies in the open Newton stratum of $\Sh_{\bar{\bF}_p}$ and it means ordinary if the ordinary locus in $\Sh_{\bar{\bF}_p}$ is nonempty, which will be the case for us in the rest of the paper.}
\end{conj}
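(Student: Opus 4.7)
The plan is to establish the conjecture in the setting where \Cref{thm_int} applies—GSpin Shimura varieties of signature $(b,2)$ with $p$ not dividing the discriminant—by induction on $b$; the case of unitary Shimura varieties claimed in the abstract can then be deduced by embedding a unitary variety into an appropriate GSpin one, and the base case $b\le 2$ is covered by known results on modular curves and Hilbert modular surfaces. Fix an ordinary point $x\in\Sh_{\overline{\bF}_p}$ and let $Z$ be the Zariski closure of its prime-to-$p$ Hecke orbit. By construction $Z$ is stable under all prime-to-$p$ Hecke correspondences, equidimensional (these correspondences act transitively on its components), and its ordinary locus is open and dense in $Z$, since ordinarity is preserved by prime-to-$p$ Hecke correspondences. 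I claim it suffices to prove that $Z$ contains some special divisor $Z(m)$ with $p\nmid m$: indeed, the prime-to-$p$ Hecke orbit of $Z(m)$ is the family $\{Z(mk^2):p\nmid k\}$, so containing $Z(m)$ would force $Z$ to contain its entire Hecke orbit, and the union of special divisors is Zariski-dense in $\Sh_{\overline{\bF}_p}$.

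To locate such a $Z(m)\subseteq Z$, I first dispose of the case where $Z$ itself lies in some special divisor $Z(m_0)$: one then applies the inductive hypothesis inside $Z(m_0)$, which is a union of GSpin Shimura varieties of signature $(b-1,2)$ still satisfying the self-duality hypothesis. Otherwise, I construct a smooth proper curve $C\subseteq Z$ passing through an ordinary point and not lying in any $Z(m)$; this should be possible by iterated hyperplane sections in a suitable projective compactification combined with a Bertini-type argument, using that the countable family of special divisors cannot be Zariski-dense in $Z$ by assumption. Applying \Cref{thm_int} to $C$ yields infinitely many points $y\in C$ that lie on special divisors $Z(m_y)$ with $p\nmid m_y$. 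Each such $y$ is ordinary in $\Sh_{\overline{\bF}_p}$ and restricts to an ordinary point of the smaller Shimura variety $Z(m_y)$, by compatibility of Newton stratifications under the embedding of a special divisor. The inductive hypothesis applied to $Z(m_y)$ then implies that the prime-to-$p$ Hecke orbit of $y$ is Zariski-dense in $Z(m_y)$; since $Z$ is closed and Hecke-stable and contains $y$, we conclude $Z\supseteq Z(m_y)$, which finishes the key step.

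The principal obstacle I expect is the construction of $C$ with all required properties simultaneously—smooth, proper, generically ordinary, and avoiding the countable family of special divisors—which requires careful work in a toroidal or Baily--Borel compactification and some control of boundary behavior; in particular, one must arrange the hyperplane sections so that $C$ remains inside $Z$, is not swallowed by the boundary, and does not get pushed into a special divisor by accident. A secondary technical point is verifying that the self-duality and discriminant hypotheses of \Cref{thm_int} propagate correctly from $\Sh$ to each $Z(m_y)$ at every inductive step; this is controlled by the explicit lattice-theoretic description of special divisors as GSpin Shimura varieties attached to orthogonal complements of vectors of norm $m_y$ in $L$, and should be manageable by restricting to $m_y$ in a suitable congruence class modulo $\disc(L,Q)$.
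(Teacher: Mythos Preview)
Your overall strategy—induction on $b$, applying \Cref{thm_int} to a proper generically ordinary curve inside $Z=\overline{T_x}$, then invoking the inductive hypothesis on the smaller GSpin Shimura variety $Z(m_y)$—is exactly the paper's approach. The genuine gap is the step you yourself flag as the principal obstacle: producing a \emph{proper} curve $C\subset Z$. A Bertini-type argument in a projective compactification does not suffice, and the paper does not attempt this.

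The difficulty is that a priori one only knows $\dim Z\geq 1$, while the boundary of $\Sh^{\BB}_k$ has $1$-dimensional cusps. If $\overline{T_x}^{\BB}$ meets the boundary in a locus of codimension $\leq 1$ inside $\overline{T_x}^{\BB}$, no hyperplane section avoids it. The paper therefore treats the boundary cases by separate, genuinely different arguments before ever looking for a curve. If $\overline{T_x}^{\BB}$ contains a $0$-dimensional cusp, it adapts Chai's analysis of Hecke-invariant ideals in the formal completion at the cusp (\Cref{pf_dim0}) to conclude directly that $\overline{T_x}=\Sh_k$. If $\overline{T_x}^{\BB}$ meets a $1$-dimensional cusp $\Upsilon$, it passes to the toroidal compactification, where $\pi^{-1}(\Upsilon)$ is an $(\cE\otimes\Lambda)$-torsor over a modular curve; an explicit computation shows the unipotent Hecke action gives all $\ell$-power torsion translates (\Cref{onedimbdrycharzero}, \Cref{onedimbdry}), forcing either $\overline{T_x}=\Sh_k$ or the boundary intersection to be $0$-dimensional with $\dim\overline{T_x}^{\BB}\geq 2$ (\Cref{pf_Hecke-1dim}). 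Only after this reduction can one choose a proper curve in $\overline{T_x}$ by avoiding finitely many points, and then your argument (and the paper's) runs.

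Two smaller corrections. First, \Cref{thm_int} does not guarantee that every intersection point $y\in C\cap Z(m_y)$ is ordinary; rather, it gives infinitely many such points, and since $C$ has only finitely many non-ordinary points, infinitely many of them are ordinary—so choose $y$ among those. Second, your case ``$Z\subset Z(m_0)$'' never occurs: the paper rules it out by an $\ell$-adic monodromy argument (\Cref{basichecke}(1)), so no separate treatment is needed.
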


There is a further generalization to non-ordinary points, which we do not discuss here. For more about the Hecke orbit conjecture and generalizations, see \cite{Ch03}, \cite{Chai05},\cite{Chai06}, \cite{CO06}, \cite{CO09} and \cite{CO19}.
Using Theorem \ref{thm_int} as our main input, we establish the ordinary Hecke orbit conjecture for GSpin Shimura varieties, as well as certain unitary Shimura varieties. 
\begin{theorem}\label{thm_Hecke}
Let $\Sh_{\bF_p}$ denote the mod $p$ (where $p \geq 5$) fiber of the canonical integral Shimura variety associated to one of the following data: 
\begin{enumerate}
    \item[The orthogonal case.] A quadratic $\bZ$-lattice with signature $(b,2)$ having discriminant prime to $p$ with the associated Shimura variety defined in \S\ref{def_Sh}.
    \item[The unitary case.] An imaginary quadratic field $K$ split at $p$, and an $\cO_K$-Hermitian lattice having signature $(n,1)$, with discriminant prime to $p$ with the associated Shimura variety defined in \cite[\S 2]{KR14} (see also \cite[\S 9.3]{SSTT}). 
\end{enumerate}
Then the prime-to-$p$ Hecke orbit of an ordinary point is dense in $\Sh_{\bF_p}$. 
\end{theorem}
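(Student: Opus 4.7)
My plan is to deduce Theorem \ref{thm_Hecke} from Theorem \ref{thm_int} by induction on the signature parameter $b$. The base cases $b\le 3$ are supplied by prior work: $b=2$ (Hilbert modular surfaces) is handled in our earlier paper \cite{MST} and classical references, while $b=3$ (Siegel threefold) is Chai's theorem \cite{Chai95}. The unitary case is handled by a parallel induction using the corresponding unitary analog of Theorem \ref{thm_int} (provable by the methods of \cite{SSTT}); I will concentrate on the orthogonal case for $b\ge 4$ below.

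Fix an ordinary point $x\in \Sh_{\bar\bF_p}$ and let $Z$ be the Zariski closure of its prime-to-$p$ Hecke orbit, a Hecke-stable closed subscheme of positive dimension (since the orbit is infinite). Assume for contradiction $Z\subsetneq \Sh_{\bar\bF_p}$; by Hecke-stability $Z$ cannot be contained in any single special divisor. Working in a toroidal compactification and using a Bertini-type argument, I construct, from an irreducible component of $Z$ through $x$, a smooth proper irreducible curve $C$ with a finite map to $\Sh_{\bar\bF_p}$ whose image lies in $Z$, is generically ordinary, and is contained in no $Z(m)$. Applying Theorem \ref{thm_int} to $C$ gives infinitely many closed points mapping into $\bigcup_{p\nmid m}Z(m)$; discarding the finitely many non-ordinary ones, I obtain an ordinary point $y\in Z\cap Z(m_0)$ with $p\nmid m_0$.

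Since $p\nmid m_0$, a component of $Z(m_0)$ through $y$ is the image of a GSpin Shimura variety $\Sh'$ of signature $(b-1,2)$ with discriminant prime to $p$, and the prime-to-$p$ Hecke operators of $\Sh'$ come from a subgroup of those of $\Sh$. The prime-to-$p$ Hecke orbit of $y$ in $\Sh'$ therefore lies in $Z$; by the inductive hypothesis it is Zariski dense in $\Sh'$, yielding $Z(m_0)\subset Z$. I now repeat the construction inside $\Sh'$: applying Theorem \ref{thm_int} at signature $(b-1,2)$ (valid since $b-1\ge 3$) to a curve $C'\subset\Sh'$, I produce a second special divisor $Z(m_1)$ of $\Sh$, arising from a vector $u$ in the sublattice $L'=v_0^\perp\cap L$ defining $\Sh'$ and distinct from $Z(m_0)$, such that $Z(m_1)\subset Z$ (by the same inductive argument applied at the new ordinary point $y'\in Z(m_0)\cap Z(m_1)$). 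At $y'$ the codimension-$1$ tangent spaces $T_{y'}Z(m_0)$ and $T_{y'}Z(m_1)$ in $T_{y'}\Sh_{\bar\bF_p}$ are distinct (since the defining vectors $v_0$ and $u$ are linearly independent), so their sum equals $T_{y'}\Sh_{\bar\bF_p}$, forcing $\dim Z=b$. Irreducibility of the ordinary locus of $\Sh_{\bar\bF_p}$ (a consequence of Chai--Oort) then yields $Z=\Sh_{\bar\bF_p}$.

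The most delicate step I expect is the transversality claim at $y'$, namely that $Z(m_0)$ and $Z(m_1)$ have distinct tangent spaces at their intersection: this hinges on a genericity analysis of the positions of special divisors at ordinary points (best done in local Serre--Tate-style coordinates), together with the selection of the curve $C'$ inside $\Sh'$ so that the divisor $Z(m_1)$ produced by Theorem \ref{thm_int} is genuinely different from $Z(m_0)$ rather than a component of it. A secondary technical issue is the Bertini-style construction of the curves $C$ and $C'$ inside possibly singular closed subvarieties of $\Sh_{\bar\bF_p}$ (respectively $\Sh'$) via a suitable toroidal compactification, while preserving generic ordinariness and avoiding all special divisors simultaneously.
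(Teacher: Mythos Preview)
Your overall inductive architecture matches the paper's: reduce to finding an ordinary point of $\overline{T_x}$ on some $Z(m_0)$ with $p\nmid m_0$ via Theorem~\ref{thm_int}, then use the inductive hypothesis on the $(b-1,2)$ Shimura variety to swallow $Z(m_0)$ into $\overline{T_x}$. However, there is a genuine gap in the step you flag as a ``secondary technical issue,'' and your endgame via transversality is both unnecessary and not quite correct as written.

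\textbf{The proper curve.} You assert that a Bertini argument in a toroidal compactification produces a smooth proper curve $C$ with image in $Z\subset\Sh_{\bar\bF_p}$. But Theorem~\ref{thm_int} requires $C$ to map into the \emph{open} Shimura variety, not its compactification. If $\overline{T_x}^{\BB}$ meets the boundary of $\Sh^{\BB}_k$, a general complete-intersection curve in $\overline{T_x}^{\BB}$ will also meet the boundary, and you cannot apply Theorem~\ref{thm_int} to it. Since a priori we only know $\dim\overline{T_x}\ge 1$, while the boundary of $\Sh^{\BB}_k$ has dimension up to $1$, there is no reason a Bertini slice can avoid the boundary. This is precisely the content of the paper's \S\ref{sec_Hecke}: the case where $\overline{T_x}^{\BB}$ contains a $0$-dimensional cusp is handled by a completely different argument (Proposition~\ref{pf_dim0}, adapting Chai's analysis of Hecke-invariant ideals in the completed local ring at the cusp), and the $1$-dimensional cusp case (Corollaries~\ref{onedimbdry}, \ref{pf_Hecke-1dim}) uses the torsor structure of the toroidal fiber to force either $\overline{T_x}=\Sh_k$ outright or $\dim(\overline{T_x}^{\BB}\setminus\overline{T_x})=0$ with $\dim\overline{T_x}\ge 2$, at which point one \emph{can} find a proper curve avoiding the finitely many boundary points. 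Your proposal does not supply any of this, and without it the induction does not get off the ground.

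\textbf{The endgame.} Once you have $Z(m_0)\subset\overline{T_x}$, your detour through a second divisor $Z(m_1)$ and a tangent-space transversality claim is both more delicate than needed and not rigorous as stated: equality of Zariski tangent spaces at $y'$ does not bound $\dim Z$ from below unless $y'$ is a smooth point of $Z$, and there is no reason it should be (e.g.\ $Z$ could locally equal $Z(m_0)\cup Z(m_1)$). The paper instead observes that Theorem~\ref{thm_int} already gives \emph{infinitely many} ordinary points on $C$ lying on distinct $Z(m)$'s; running the inductive step at each yields infinitely many distinct $(b-1)$-dimensional components $Z'(m)\subset\overline{T_x}$, forcing $\dim\overline{T_x}=b$. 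Alternatively (as in the introduction's sketch), one can simply note that the prime-to-$p$ Hecke orbit of any special divisor is Zariski dense in $\Sh_k$. Either route avoids the transversality issue entirely.
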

As far as we know, this result is the first of its kind towards settling the Hecke orbit conjecture in the setting of orthogonal Shimura varieties.

\subsection{Outline of the proof of Theorem \ref{thm_int}}
 There are two broad steps in our proof: 
\begin{enumerate}

    \item We use Borcherds theory to compute the asymptotic of the intersection numbers $(C .Z(m))$ as $m\rightarrow \infty$; 
    
    \item We then prove that given finitely many points $P_1, \cdots, P_n \in C(\overline{\bF}_p)$, the local contributions have the property $\sum_{p\nmid m, \ 1\leq m\leq X}\sum_{i=1}^n i_{P_i}(C.Z(m)) < \sum_{1\leq m\leq X} C. Z(m)$ for large enough $X\in \bZ$.

\end{enumerate}
These two steps together prove that as $X\rightarrow \infty$, more and more points $P\in C(\overline{\bF}_p)$ must contribute to the intersection $C.(\sum_{p\nmid m, 1\leq m\leq X} Z(m))$, thereby yielding Theorem \ref{thm_int}.


The second step involves both local and global techniques.  We use the moduli interpretation of the special divisors $Z(m)$ to express the local contribution $i_P(C.Z(m))$ in terms of a lattice point count in an \emph{infinite} nested sequence of lattices. This is another way in which the characteristic-$p$ nature of this work complicates matters -- the analogous expression in the characteristic $0$ setting involved a lattice point count in a \emph{finite} nested sequence of lattices, which makes matters far more tractable. The most technical part of the paper deals with controlling the main term of $i_P(C.Z(m))$ for supersingular points, which we do over Sections \ref{sec_three},\ref{sec_decay_sg} and \ref{sec_decay_ssp}. This requires using Ogus's and Kisin's work to explicitly understand the equicharacteristic deformation theory of special endomorphisms at supersingular points in terms of crystalline theory.  

One of the difficulties of this result compared to \cite{MST} is that, for $b \leq 3$, it is relatively easy to bound the error terms of 
$i_P(C.Z(m))$.  However, in general, high levels of tangency between $C$ and special divisors could in principle cause
this term to grow uncontrollably.
In order to control this, we use a global argument which first appeared in \cite{SSTT}. Note that $i_P(C .Z(m))$ is necessarily bounded above by the global intersection number $ (C.Z(m))$. We use the fact that the global bound holds for \emph{every positive integer} $m$ (representable by $(L,Q)$) in order to obtain a sufficient control on the error term of the local contribution $i_P(C. Z(m))$ \emph{on average}, as we average over all positive integers $m$. More precisely, we prove that if the error term of $i_P(C. Z(m))$ is too close to the global intersection number for several values of $m$, then there must exist a positive integer $m_0$ for which the local intersection number $i_P(C.Z(m_0))$ is greater than the global intersection number $(C.Z(m_0))$, which is a contradiction. 


It is crucial to our proof of the local bound that the curve $C$ is an algebraic curve. Given a formal curve having the form $\Spf \overline{\bF}_p[[t]] \subset \Sh_{\bF_p}$ with closed point $P$, the term $i_P(\Spf \overline{\bF}_p[[t]]. Z(m))$ is well defined. It is easy to construct examples of formal curves that have the property that $i_P(\Spf \overline{\bF}_p[[t]]. Z(m_i))$ grows exponentially fast for appropriate sequences of integers $m_i$,
as we discuss in Section \ref{formal_example}.  Indeed, the growth rate of $i_P(\Spf \overline{\bF}_p[[t]]. Z(m))$ can be used as a necessary criterion to determine whether or not a formal curve contained in $\Sh$ is algebraizable. 

\subsection{Contributions from supersingular points}\label{ss_intro_ss}
As stated in the outline, the most technical part of our paper is dealing with supersingular points.  The main difficulty is caused by the fact that the local contribution $i_P(C. Z(m))$ from a supersingular point $P\in C(\overline{\bF}_p)$ has the same order of magnitude as the global intersection number $(C.Z(m))$ as $m\rightarrow \infty$; 
Indeed, the global intersection number $(C.Z(m))$ can be expressed in terms of the $m$-th Fourier coefficient of a non-cuspidal modular form of weight $1+b/2$, whose Eisenstein part is well understood (\Cref{asymp_glo} and \Cref{Eis-cof_L}). The main term of the local contribution $i_P(C.Z(m))$ at a supersingular point $P$ is controlled by the $m$-th Fourier coefficients of the theta series associated to a nested sequence of positive definite lattices each having rank $b+2$, and is therefore also asymptotic to the $m$-th Fourier coefficient of an Eisenstein series of weight $1+b/2$ (see \S\ref{sec_lat}).

Therefore, a more refined understanding of the constants involved in the global intersection number and the supersingular contribution is needed to prove our theorem. In fact, this is precisely what goes wrong when $C$ is no longer generically ordinary.  There are examples when finitely many supersingular points can indeed conspire to fully make up the entire global intersection number. We illustrate this with the following example.

Consider the setting of $X\rightarrow C$, where $C/\overline{\bF}_p$ is a curve and $X$ is a non-isotrivial family of Kummer surfaces. Indeed, let $E\rightarrow C$ denote a non-isotrivial family of elliptic curves, and let $E_0\rightarrow C$ denote a supersingular elliptic curve pulled back to $C$. Consider the family $K(E\times_C E_0)\rightarrow C$, where $K(E\times_C E_0)$ denotes the Kummer surface associated to the abelian surface $E\times_C E_0$. The set of points $c\in C(\overline{\bF}_p)$ such that the Picard rank of $K(E\times E_0)_c$ is greater than the generic Picard rank of $K(E\times E_0)$ is precisely the set of $c\in C(\overline{\bF}_p)$ such that the fiber of $E$ at $c$ is supersingular, and therefore the total global intersection number is made up from the local contributions from these finitely many supersingular points.

\subsection{Outline of proof of Theorem \ref{thm_Hecke}}


    
    
    
We now survey the proof of the Hecke orbit conjecture, using  Theorem \ref{thm_int} as input.
We will focus on the orthogonal case, since the unitary case follows by a similar argument.

Let us first observe that Chai's approach for $\cA_g$ does not easily extend to this case. Chai's argument involves several steps, some of which generalize to the case of all Shimura varieties of Hodge type, but there are many ideas in Chai's work which don't generalize to our setting. Indeed, a key step in his paper is the so-called Hilbert trick which states that every $\overline{\bF}_p$-valued point of $\cA_g$ is contained in a positive-dimensional Shimura subvariety of $\cA_g$, namely a Hilbert modular variety. Unfortunately, this fact does not hold for most Shimura varieties.  The case of Hilbert modular varieties is more tractable than $\cA_g$ because the geometrically simple factors of the associated reductive groups have rank 1.  

Instead, our idea is to use an inductive argument on the dimension of $\Sh_{\bF_p}$.
Our argument consists of the following steps: 
\begin{enumerate}
    \item The setting of Shimura varieties associated to quadratic lattices having signature $(1,2)$ follows because these Shimura varieties are one-dimensional, and it is well known that the Hecke orbit of an ordinary point is an infinite set. Now, inductively assume that the Hecke orbit conjecture has been proved for all Shimura varieties associated to quadratic lattices having signature $(b-1,2)$, with discriminant relatively prime to $p$, where $b>1$. 
    \item Let $Z\subset \Sh_{\bF_p}$ denote a generically ordinary Hecke stable subvariety, where $\Sh$ is the canonical integral model of a Shimura variety associated to a quadratic lattice having signature $(b,2)$ with discriminant relatively prime to $p$. Such a subvariety $Z$ necessarily has to be positive dimensional, as the Hecke orbit of an ordinary point is necessarily infinite. 
    
    \item Suppose that $Z$ contains a proper curve $C$ that is generically ordinary. Then, Theorem \ref{thm_int} implies that $C$ intersects the union special divisors $\bigcup_{p\nmid m}Z(m)$ at infinitely many points, and therefore at an ordinary point $x \in Z(m_0)$. The special divisor $Z(m_0)$ is the special fiber of a Shimura variety in its own right, associated to a quadratic lattice having signature $(b-1,2)$ and prime-to-$p$ discriminant (because $p\nmid m_0$), and so the prime-to-$p$ Hecke orbit of $x$ contains a Zariski-dense subset of $Z(m_0)$ by the inductive hypothesis. Therefore, $Z(m_0) \subset Z$, and the result follows from the fact that the Hecke orbit of any special divisor is Zariski dense in $\Sh_{\bF_p}$. 
    
    \item To deal with the case when $Z$ might not contain a proper curve, we directly prove that any generically ordinary Hecke stable subvariety that intersects the boundary of the Baily--Borel compactification of $\Sh_{\bF_p}$ (constructed in \cite{MP19}) must be all of $\Sh_{\bF_p}$. 
\end{enumerate}

In other words, even though a ``generic'' $\overline{\bF}_p$-valued point of $\Sh$ may not lie in a smaller positive-dimensional Shimura variety, we are able to reduce to the case of a smaller Shimura variety using the intersection-theoretic input of Theorem \ref{thm_int}.

\subsection{Previous work}
In addition to the ones mentioned above, we discuss here  other related work in the literature.

Chai and Oort \cite{CO06} proved \Cref{thm_K3} for Kummer surfaces associated to the product of two elliptic curves and \Cref{thm_int} for $\Sh=\cA_1\times \cA_1$ without the assumption that $C$ is proper. 
The number field analogs of \Cref{thm_K3} and \Cref{thm_int} have been proved in \cite{SSTT}, based on the previous work by Charles \cite{Ch} and \cite{ST20} for $\cA_1\times \cA_1$ and Hilbert modular surfaces respectively.
For characteristic zero families, \cite{Tayou} proved an equidistribution result on the the Noether--Lefschetz locus, which is a refinement of the theorem of Green.


For the results on Hecke orbits,
Chai has also proved Conjecture \ref{conjHO} in the setting of Hilbert modular varieties, as well as for some PEL type C Shimura varieties. Building on work of Chai, the second named author \cite{Shankar} proved Conjecture \ref{conjHO} for the ordinary locus in Deligne's mod\`{e}les \'{e}tranges. 

There is also a generalization of \Cref{conjHO} to $\bar{\bF}_p$-points in other Newton strata (see \cite[Conj.~3.2]{Chai06}).  In the case of $\cA_g$, there is extensive work of Chai and Oort studying the properties of Newton strata (see their survey paper \cite{CO19} and the references there); in combination with work of Yu, this gives the full Hecke orbit conjecture  for $\cA_g$ and Hilbert modular varieties (see for instance \cite{Chai05} for the proofs). More recently, Zhou proved \Cref{conjHO} for (the $\mu$-ordinary loci of) quaternionic Shimura varieties associated to quaternion algebras over some totally real fields (\cite[Thm.~3.1.3, Rmk.~3.1.4]{Zhou}); 
and Xiao proved the generalized version for certain PEL Shimura varieties of type A and C and the points in those Newton strata which contain certain hypersymmetric points (\cite[Thm.~7.1, Cors. 7.5, 7.6]{Xiao}). 


\subsection{Organization of paper}
In \S\ref{sec_GSpin}, we recall the definitions of GSpin Shimura varieties, special endomorphisms, and special divisors. 
In \S\ref{sec_heuristic}, we formulate theorems \ref{cor_decay_sg} and \ref{cor_decay_ssp} which
describe the decay of lattices of special endomorphisms at supersingular points.  The proof of these statements occupies the next three sections, which may be skipped on a first reading. 
In \S\ref{sec_three}, we recall from Ogus's work \cite{Ogus79} the explicit description of the lattices of special endomorphisms at supersingular points and we use Kisin's work \cite{Kisin} to compute an $F$-crystal $\bbL_\cris$, which controls the deformation of special endomorphisms. In sections \S\S\ref{sec_decay_sg}-\ref{sec_decay_ssp} we use this explicit description to 
prove the decay results.  In \S\ref{sec_pf}, we prove \Cref{thm_int}
following the outline given above.    In \S\ref{sec_Hecke}, we prove \Cref{thm_Hecke} using \Cref{thm_int}; we only use the statement (not the proof) of \Cref{thm_int} and the reader who is interested in the Hecke orbit conjecture may directly proceed to \S\ref{sec_Hecke} after \S\ref{sec_GSpin}.

\subsection*{Notation} Throughout the paper, $p\geq 5$ is a prime. We write $f\asymp g$ if $f=O(g)$ and $g=O(f)$.

\subsection*{Acknowledgement} We thank George Boxer, Ching-Li Chai, Johan de Jong, Kai-Wen Lan, Keerthi Madapusi Pera, Frans Oort, Arul Shankar, Andrew Snowden, Salim Tayou, and Tonghai Yang for helpful discussions, as well as Arthur and D.W. Read for additional assistance.   Y.T. has been partially supported by the NSF grant DMS-1801237.


\section{GSpin Shimura varieties and special divisors}\label{sec_GSpin}

In this section, we review basic definitions, terminology, and notation for GSpin Shimura varieties, special endomorphisms, and special divisors that we need in the rest of the paper.

Let $(L,Q)$ be a quadratic $\bZ$-lattice of signature $(b,2)$, $b\geq 1$. We assume that $(L,Q)$ is self-dual at $p$. We recall the canonical integral model of the GSpin Shimura variety associated to $(L,Q)$ and the definition of special divisors. The main references are \cite[\S\S 3-5]{MP16} and \cite[\S\S 4.1-4.3]{AGHMP};\footnote{Since we work with the hyperspecial case, all the results listed here are in \cite{MP16} and we follow the convention of using cohomology as in \cite{MP16}.} see also \cite[\S 2]{SSTT} for a brief summary.

\begin{para}\label{def_Sh}
Let $V:=L\otimes_\bZ \bQ$ and let $[-,-]$ denote the bilinear form on $V$ given by $[x,y]=Q(x+y)-Q(x)-Q(y)$. Let $G:=\GSpin(L\otimes \bZ_{(p)},Q)$ be the group of spinor similitudes of $L\otimes \bZ_{(p)}$, which is a reductive group over $\bZ_{(p)}$. The group $G(\bR)$ acts on the Hermitian symmetric domain $D_L=\{z\in V_{\bC}\mid [z,z]=0, [z,\bar{z}]<0\}/\bC^\times$ via $G\rightarrow \SO(V)$. For $[z]\in D_L$ with $z\in V_{\bC}$, let $h_{[z]}: \Res_{\bC/\bR} \bG_m\rightarrow G_\bR$ denote the unique homomorphism which induces the Hodge decomposition on $V_\bC$ given by $V_\bC^{1,-1}=\bC z, V_\bC^{0,0}=(\bC z\oplus \bC \bar{z})^\perp, V_\bC^{-1,1}=\bC \bar{z}$. Thus $(G_\bQ, D_L)$ is a Shimura datum with reflex field $\bQ$. 

Let $\bK\subset G(\bA_f)$ be a compact open subgroup contained in $G(\bA_f)\cap C(L\otimes \widehat{\bZ})^\times$, where $C(L\otimes \widehat{\bZ})$ is the Clifford algebra of $(L\otimes \widehat{\bZ},Q)$ and we assume that $\bK_p=G(\bZ_p)$. Then we have the GSpin Shimura variety $Sh:=Sh(G_{\bQ},D_L)_{\bK}$ over $\bQ$ with $Sh(G_{\bQ},D_L)_{\bK}(\bC)=G(\bQ)\backslash D_L\times G(A_f)/\bK$ and by \cite[Theorem 2.3.8]{Kisin}, $Sh$ admits a canonical smooth integral model $\Sh:=\Sh_\bK$ over $\bZ_{(p)}$.
\end{para}

\begin{para}\label{def_bbL}
Let $H$ denote the Clifford algebra $C(L)$ equipped with the right action by itself via right multiplication and we equip $H\otimes \bZ_{(p)}$ with the action of $G$ by left multiplication. By picking a suitable symplectic form on $H$, we have $G_\bQ\rightarrow \GSp(H\otimes \bQ)$, which induces a morphism of Shimura data and thus, there is the Kuga--Satake abelian scheme $A^\univ\rightarrow Sh$ whose first $\bZ$-coefficient Betti cohomology $\bbH_B$ is the local system induced by $H$ (and its $G_\bQ$-action).
This Kuga--Satake abelian scheme $A^\univ\rightarrow Sh$
extends to an abelian scheme $\cA^\univ\rightarrow \Sh$ equipped with a left $C(L)$-action. Let $\bbH_\dR, \bbH_{\ell, \et}$ denote the first relative de Rham cohomology and $\ell$-adic \'etale cohomology with $\bZ_\ell$-coefficient of $\cA^\univ\rightarrow \Sh$ for $\ell\neq p$, and let $\bbH_\cris$ denote the first relative crystalline cohomology of $\cA^\univ_{\bF_p}\rightarrow \Sh_{\bF_p}$.

The action of $L$ on $H$ via left multiplication induces a $G_{\bQ}$ equivariant map on $L\otimes \bQ \rightarrow \End_{C(L)}(H\otimes \bQ)$ and thus we have a $\bZ$-local system $\bbL_B$ over $Sh$ with a natural embedding $\bbL_B\rightarrow \End_{C(L)}(\bbH_B)$. There are a filtered vector bundle with connection $\bbL_{\dR}\subset \End_{C(L)}(\bbH_{dR})$, a $\bZ_\ell$-lisse sheaf $\bbL_{\ell, \et}\subset \End_{C(L)}(\bbH_{dR})$ and an $F$-crystal $\bbL_\cris \subset \End_{C(L)}(\bbH_\cris)$ such that these embeddings along with $\bbL_B\rightarrow \End_{C(L)}(\bbH_B)$ are compatible under Betti-de Rham, Betti-\'etale, de Rham-crystalline comparison maps (see \cite[Prop.~3.11, 3.12, Prop.~4.7]{MP16}). By \cite[\S 4.3]{AGHMP}, $\bbL_?,?=B,\dR, (\ell,\et), \cris$ are equipped with a natural quadratic form $\bbQ$ given by $f\circ f=\bbQ(f)\cdot \Id$ for a section $f$ of $\bbL_?$.
\end{para}

\begin{defn}[{\cite[Def.~4.3.1]{AGHMP}}] Let $T$ denote an $\Sh$-scheme.
\begin{enumerate}
    \item An endomorphism $v\in \End_{C(L)}(\cA^\univ_T)$ is \emph{special} if all cohomological realizations of $v$ lie in the image of $\bbL_?\rightarrow\End_{C(L)}(\bbH_?)$, where $?=B, \dR, \cris, (\ell, \et) $, for all $\ell\neq p$.\footnote{We drop the ones which do not make sense. For instance, if $p$ is invertible in $T$, we drop $\cris$; if $T_\bQ=\emptyset$, we drop $B$.}
    \item Let $\cA^\univ_T[p^\infty]$ denote the $p$-divisible group associated to $\cA^\univ_T$. An endomorphism $v\in \End_{C(L)}(\cA^\univ_T[p^\infty])$ is \emph{special} if its crystalline realization lies in $\bbL_\cris$.
\end{enumerate}
\end{defn}

\begin{remark}\label{posdef}
For connected $T$, an endomorphism $v\in \End_{C(L)}(\cA^\univ_T)$ or $\End_{C(L)}(\cA^\univ_T[p^\infty])$ is special if and only if there exists a geometric point $t\in T$ such that $v_t\in \End_{C(L)}(\cA^\univ_t)$ or $\End_{C(L)}(\cA^\univ_t[p^\infty])$ is special (see \cite[Prop.~4.3.4, Lem.~4.3.5]{AGHMP} and their proofs). Moreover, if $T_{\bF_p}\neq \emptyset$, then we may pick a geometric point $t\in T_{\bF_p}$ and for such $t$, $v_t\in\End_{C(L)}(\cA^\univ_t)$ is special if and only if the crystalline realization of $v_t$ lies in $\bbL_\cris$ (see \cite[Cor.~5.22, \S 5.24]{MP16}). In this paper, we will mainly work with $T$ which is an $\Sh_{\bF_p}$-scheme and thus we will only use $\bbL_\cris$ to verify special endomorphisms.
\end{remark}

\begin{remark}
By \cite[Lem.~5.2]{MP16}, for $v\in \End_{C(L)}(\cA^\univ_T)$ special, we have $v\circ v=[Q(v)]$ for some $Q(v)\in \bZ_{\geq 0}$ and $v\mapsto Q(v)$ is a positive definite quadratic form on the $\bZ$-lattice of special endomorphisms of $\cA^\univ_T$.
\end{remark}

\begin{defn}\label{def_spdiv}
For $m\in \bZ_{>0}$, the \emph{special divisor} $\cZ(m)$ is the Deligne--Mumford stack over
$\cM$ with functor of points $\cZ(m)(T) = \{v\in \End(\cA^\univ_T) \text{ special } | Q(v) = m\}$ for any $\Sh$-scheme $T$. We use the same notation for the image of $\cZ(m)$ in $\Sh$. By for instance \cite[Prop.~4.5.8]{AGHMP}, $\cZ(m)$ is an effective Cartier divisor and it flat over $\bZ_{(p)}$ and hence $\cZ(m)_{\bF_p}$ is still an effective Cartier divisor of $\Sh_{\bF_p}$; we denote $\cZ(m)_{\bF_p}$ by $Z(m)$.
\end{defn}



\section{Lattice decay statements and heuristics}\label{sec_heuristic}

In this section, we formulate local intersection multiplicities in terms of counting points from a nested sequence of lattices.  In the supersingular case, we then state decay estimates for this nested sequence that will be crucial for controlling the local contributions.  Proving these estimates will occupy \S\S \ref{sec_three}, \ref{sec_decay_sg}, and \ref{sec_decay_ssp}.  
We give a heuristic explanation for why these decay estimates suffice.  Finally, at the end of the section, we construct a formal family where the local multiplicities behave wildly; as a consequence, in our argument, it is necessary to use the global geometry to control the local error terms.

\subsection*{Preliminaries and main statements}
Let $k$ denote $\overline{\bF}_p$ and recall from \Cref{thm_int} that $C \rightarrow \cS_k$ is a smooth proper curve whose generic point maps to the ordinary locus of $\cS_k$. Let $P \in C(k)$, and let $t$ be a local coordinate at $P$ (i.e., $\widehat{C}_P = \Spf k[[t]]$). Let $\cA/k[[t]]$ denote the pullback of the universal abelian scheme $\cA^{\univ}/\cS$. Finally, let $L_n$ denote the $\bZ$-module of special endomorphisms of $\cA \bmod t^n$. The moduli-theoretic description of the special divisors yields the following expression:

\begin{equation}\label{localexpm}
    i_P(C.Z(m)) = \sum_{n=1}^{\infty} \#\{v \in L_n\mid Q(v) = m  \}.  
\end{equation}

As discussed in the introduction, one of the main difficulties in comparing local and global intersections is the contribution of supersingular and especially superspecial points; these are the supersingular points for which the lattice of special endomorphisms is as large as possible (see \S\ref{par_decomp_cL} for a precise definition). We will therefore assume that the image of $P$ in $\cS_k$ is contained in the supersingular locus of $\cS_k$.  We thus have that $\cA$ is generically ordinary and specializes to a supersingular point, and hence the Hasse invariant $H$ on $\cS_k$ must vanish to some order at $P$. 

In order to control the number of points in the nested family of lattices $L_n$, as $n$ grows, we will prove that the covolumes of these lattices grow rapidly; note that the covolume of a lattice determines -- to first order -- the number of lattice points with bounded norm.  

We define $h_P$ to be $v_t(H)$, namely the $t$-adic valuation of $H$ restricted to $\widehat{C}_P$.  Our bounds will be in terms of the quantity $h_P$, and so we make the following definitions.
\begin{defn}
Let $r\geq 0$ denote an integer, and let $a =\frac{h_P}{2}$. Define $h_r = [h_P(p^r+\hdots p+1+1/p)]$, 
$h'_r = [h_P(p^r+\hdots +1) + a/p]$ and $h'_{-1} = [a/p]$.
\end{defn}

Suppose that the point $P$ is supersingular, but not superspecial. Then we have:
\begin{theorem}\label{cor_decay_sg}
The index $|L_1/L_n|$ of $L_n$ inside $L_1$ satisfies the inequality
$$|L_1/L_n| \geq p^{2+2r}$$ 
if $h_r+1\leq n\leq h_{r+1}$.  
\end{theorem}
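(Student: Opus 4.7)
The plan is to translate the lifting question into crystalline deformation theory and then to carry out an explicit Frobenius iteration, using the description of the $F$-crystal $\bbL_\cris$ over $\widehat{C}_P = \Spf k[[t]]$ that will be worked out in Section \ref{sec_three}. Concretely, by Remark \ref{posdef} and Grothendieck--Messing theory applied to the Kuga--Satake abelian scheme (equivalently, via Kisin's description in terms of Breuil--Kisin modules), an element $v \in L_1$ lies in $L_n$ if and only if its crystalline realization $\tilde v \in \bbL_\cris|_P$ extends to a horizontal, Frobenius-compatible endomorphism of $\bbL_\cris$ over $\Spec k[[t]]/(t^n)$ that preserves the Hodge filtration. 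This extendability is encoded by the vanishing modulo $t^n$ of a collection of ``obstruction'' sections, namely the projections of $\tilde v$ against the moving Hodge filtration.

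The next step is to exploit Ogus's explicit description of the supersingular non-superspecial $F$-crystal at $P$. In this case, $\bbL_\cris|_P$ decomposes (up to isogeny) as an orthogonal sum of a hyperbolic plane and an anisotropic part; the only piece on which the deformation is non-trivial is a distinguished rank-two ``non-\'etale'' summand on which the Frobenius $\varphi$ has off-diagonal $p$-adic valuation exactly $1$ (this is what distinguishes the non-superspecial case from the superspecial one). The Hasse invariant measures how far the deforming Hodge filtration has moved away from its value at $P$, and the hypothesis $v_t(H) = h_P$ forces this motion to have leading order exactly $t^{h_P}$. Pairing $\tilde v$ against the two basis vectors of the non-\'etale summand then yields two ``first-order'' obstructions whose $t$-adic orders are $h_P$ and roughly $h_P/p$ (the latter coming from the companion Verschiebung direction), explaining the term $h_P/p$ in the definition of $h_0$.

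The heart of the argument is now an induction on $r$, mimicking the divided-power Frobenius dynamics. Applying $\varphi$ to each obstruction scales its order of vanishing by $p$, so the second-order obstructions appear at orders $h_P \cdot p$ and $h_P + h_P/p$, and so on, yielding the telescoping sum $h_P(p^r + \cdots + 1)$ in $h_r'$ and $h_P(p^r + \cdots + 1 + 1/p)$ in $h_r$. At each threshold $h_r$, two new $\bZ_p$-independent linear conditions are imposed on $L_1 \otimes \bZ_p$ (one from each basis vector of the rank-two summand, twisted by $\varphi^r$), cutting the index up by an additional factor of $p^2$. Summing these cuts from $r = 0$ through the current level gives $|L_1/L_n| \geq p^{2+2r}$ for $h_r + 1 \leq n \leq h_{r+1}$.

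The main obstacle I anticipate is verifying that the two conditions added at each stage are genuinely independent of, and not absorbed by, the conditions from previous stages; equivalently, that no ``collision'' occurs between the Frobenius iterates of the two obstruction directions. This boils down to an explicit rank-two matrix computation in the non-\'etale summand, using Ogus's normal form for $\varphi$ at a supersingular non-superspecial point together with the precise speed $t^{h_P}$ of Hodge-filtration motion. A secondary subtlety is the rounding in the definitions of $h_r, h_r'$: one must check that replacing the genuine (possibly non-integral) thresholds by their floors does not destroy the inductive bookkeeping, which is handled by observing that the obstruction vanishing conditions only ever see integer $t$-adic orders, so truncation by $[\cdot]$ is harmless.
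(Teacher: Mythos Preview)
Your high-level plan---pass to the crystalline side via Kisin's description and track the $p$-adic integrality of the horizontal extension $\tilde w = F_\infty w$ through iterated Frobenius---is exactly the paper's strategy. But the specific execution you describe rests on a structural error that would make the argument break down.

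You assert that at a non-superspecial supersingular point the crystal has a ``distinguished rank-two non-\'etale summand'' and plan to extract two obstruction directions by pairing against its basis. That is the \emph{superspecial} picture ($t_P=2$). In the non-superspecial case the non-\'etale piece $\bL_0$ has rank $t_P=2n\geq 4$, and Frobenius on it has the cyclic normal form of \Cref{cyclicfrob}: a length-$2n$ shift $v_i\mapsto v_{i+1}$ with a single $p^{-1}$ and a single $p$ entry, not a $2\times 2$ block. There is no canonical pair of directions to pair against; the $p$-denominators in $F_\infty w$ come from products $P_I=K_{i_1}K_{i_2}^{(i_1)}\cdots$ indexed by tuples $I\in\{1,\dots,n+1\}^r$, and isolating the tuples of minimal $t$-adic valuation at each $p$-level is a genuine combinatorial problem (\Cref{minval}). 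Relatedly, the $h_P/p$ in $h_r$ is not a ``companion Verschiebung'' obstruction: it arises because integrality of $p^r\tilde w$ is tested in the PD envelope $D_s$, which converts the bound $\nu_{r+2}\leq h_P(1+\cdots+p^{r+1})$ into $s>\nu_{r+2}/p$.

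The independence worry you flag is real, but it is not a $2\times 2$ check. The dominant term $p^{r+2}M_{r+2}\bmod p$ has a single nonzero row equal to a $k$-linear combination of at most $n+1$ consecutive $\sigma$-twists of $\overline R_{n+1}$; showing its $\bF_p$-kernel has dimension $\leq n$ requires a Moore-determinant argument (\Cref{lastgenericdecay}). One then checks the kernels are nested in $r$ and picks a single saturated rank-$n$ (hence $\geq 2$) submodule $\Lambda$ missing all of them. It is this $\Lambda$---not a sequence of pairs of linear conditions---that gives $|L_1/L_n|\geq p^{2+2r}$.
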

We remind the reader that $L_1$ contains the lattices $L_n$ with index a power of $p$ (see \cite[Rmk.~7.2.2]{MST}). The content of the above result is that the for any $n$ that is larger than $h_P(1+1/p)$, the abelian scheme $\cA \bmod t^n$ has fewer special endomorphisms than $\cA \bmod t$, and that the index of $L_n$ in $L_1$ is at least $p^2$. For $n$ greater than $h_P(p+1+1/p)$, the $\cA \bmod t^n$ has still fewer special endomorphisms than $\cA \bmod t$, and in fact the index of $L_n$ in $L_1$ is at least $p^4$, etc. 

As the lattice of special endomorphisms at $P$ is maximal when $P$ is superspecial, we need better bounds in this case. In \S\ref{sec_decay_ssp}, we establish the following result:
\begin{theorem}\label{cor_decay_ssp}
When $P$ is superspecial, the index $|L_1/L_n|$ of $L_n$ inside $L_1$ satisfies 
one of the following two inequalities:
\begin{enumerate}
    \item  $|L_1/L_n|\geq p^{1+2r}$ if $h'_{r-1}+ap^r+1\leq n \leq h'_{r}$ and $|L_1/L_n|\geq p^{2+2r}$ if $h'_{r}+1\leq n \leq h'_{r}+ap^{r+1}$.
    \item  $|L_1/L_n|\geq p$ if $h'_{-1}+a+1\leq n \leq h'_{0}$ and $|L_1/L_n|\geq p^{3+2r}$ if $h'_{r}+1\leq n \leq h'_{r+1}$.
\end{enumerate}


\end{theorem}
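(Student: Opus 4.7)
The strategy follows the same template as the proof of Theorem~\ref{cor_decay_sg}: combine the crystalline description of $\bbL_\cris$ over $k[[t]]$ developed in \S\ref{sec_three} with the lifting criterion that $v\in L_1$ belongs to $L_n$ if and only if the unique flat extension of $v$ under the Gauss--Manin connection projects into the Hodge filtration $\Fil^0\bbL_\dR$ modulo $t^n$. The index bounds on $|L_1/L_n|$ then follow by exhibiting, for each threshold appearing in the statement, an explicit $p$-adic congruence that $v$ must satisfy in order to avoid producing a $t$-adic obstruction below that threshold.

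The first step is to unpack the crystalline description at a superspecial point using the results recalled in \S\ref{sec_three}. The key distinction from the non-superspecial supersingular case is that at a superspecial point $P$ the Frobenius on the Dieudonn\'e module of the Kuga--Satake $p$-divisible group satisfies $F^2=pu$ for a unit $u$, so that $\bbL_\cris|_P$ carries an extra symmetry corresponding to the enlarged endomorphism structure of a superspecial formal module. Consequently $L_1\otimes\bZ_p$ decomposes as an orthogonal direct sum of two isotypic components $M^+$ and $M^-$ for this additional action, and the two cases in the statement will correspond to which of these components contains the leading part of $v$.

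Next, using that the Hasse invariant vanishes to order $h_P=2a$ at $P$, I would write down an explicit expansion of $\Fil^0\bbL_\dR$ inside $\bbL_\cris|_{k[[t]]}$ in a basis adapted to the $M^\pm$-decomposition. The appearance of $a$ rather than $2a$ in the thresholds --- in particular the initial shift $h'_{-1}=[a/p]$ --- reflects the fact that the superspecial $F$-crystal has a single slope $1/2$, which splits the Hasse obstruction symmetrically between the two components. One then sees that applying $F$ multiplies the $t$-adic depth of the obstruction by $p$ and adds $a$, producing the recursive thresholds $h'_r=[h_P(p^r+\cdots+1)+a/p]$; applying the twist coming from $uF^{-1}$ introduces the intermediate threshold $h'_{r-1}+ap^r$, which accounts for the finer stratification in case (1).

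The main obstacle, and what distinguishes this result from Theorem~\ref{cor_decay_sg}, is the careful book-keeping needed to separate the contributions of $M^+$ and $M^-$: obstructions at each level arise from both components and can in principle partially cancel, so one must analyze the coupling between them level by level to confirm that the index of $L_n$ in $L_1$ really does grow by the claimed factor at each threshold. The two cases should exhaust all possibilities because $v\in L_1$ is determined by its components in $M^+$ and $M^-$, and the first obstruction it encounters is governed by whichever component has the smallest $F$-depth; case (1) covers the situation where both components contribute alternately, while case (2) covers the situation where one of them is initially trivial and only the other propagates obstructions under successive iterations of $F$.
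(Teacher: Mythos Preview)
Your proposal does not identify the actual mechanism that drives the proof, and the framework you sketch does not match what is needed.

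First, the decomposition you posit is not the one the paper uses. There is no ``$M^{+}\oplus M^{-}$'' splitting coming from $F^{2}=pu$ on the crystal of special endomorphisms; the relevant decomposition at a superspecial point is Ogus's $\cL=\cL_{0}\oplus\cL_{1}$ from \S\ref{par_decomp_cL}, where $\cL_{0}$ has rank~$2$ (with basis $e_{1},f_{1}$) and $\cL_{1}$ has rank~$b$ (containing the vectors $e'_{i},f'_{i}$). The vectors that one must track are $e_{1},f_{1}$ together with a distinguished vector $e'_{1}\in\cL_{1}$, not eigencomponents of a symmetry operator.

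Second, and more seriously, the dichotomy between cases~(1) and~(2) has nothing to do with ``which component contains the leading part of $v$.'' In the paper the theorem is deduced from the Decay Lemma~\ref{decayspecial}, whose proof is a trichotomy depending on the comparison between $h=v_{t}(Q(t))$ and an auxiliary valuation $h'=v_{t}(R(t))$ where $R(t)=-\sum_{i}\bigl(X_{i}Y_{i}^{p}+X_{i}^{p}Y_{i}\bigr)$. When $h<h'$ or when $h(1+p)=h'(1+p^{2e-1})$ one lands in case~(1); the intermediate inequalities give case~(2). Your proposal never introduces $h'$ and hence cannot explain the split.

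Third, the concrete engine of the argument is missing. One computes the horizontal continuation $\tilde{w}=F_{\infty}w$ with $F_{\infty}=\prod_{i\geq0}(I+F^{(i)})$, decomposes $F$ into its $2\times2$, $2\times 2m$, and $2m\times2$ blocks $F_{t},F_{r},F_{l}$, and determines exactly which products of $\sigma$-twists of these blocks minimize the $t$-adic valuation at each fixed $p$-adic depth (Lemmas~\ref{Finfspecial} and~\ref{evalproduct}). From this one reads off that $\Span_{\bZ_{p}}\{e_{1},f_{1}\}$ decays rapidly, that $e'_{1}$ decays very rapidly (or very rapidly to first order), and --- in Case~3 --- that at least one of $e_{1},f_{1}$ decays rapidly depending on whether $\gamma+\delta$ or $\gamma-\delta$ is a $p$-unit for certain explicit leading coefficients $\gamma,\delta$. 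The index bounds then follow by the same basis-extension trick as in the deduction of Theorem~\ref{cor_decay_sg} from Theorem~\ref{genericdecay}. Your outline of ``exhibiting, for each threshold, an explicit $p$-adic congruence'' is too vague to substitute for this computation, and the specific thresholds $h'_{r-1}+ap^{r}$ in case~(1) arise precisely from the interplay of the $F_{t}$ and $F_{r}F_{l}$ products, not from an abstract recursion on $F$-depth.
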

The above results show that there is a dichotomy between the local behavior at superspecial points, and supersingular points that are not superspecial. This is because the vanishing of the Hasse invariant on $\cS_k$ is singular precisely at superspecial points (see for instance \cite[the proof of Cor.~16]{Ogus01}). This singularity forces the covolume of $L_n$ to increase faster than it otherwise would.

\begin{para}\textbf{A heuristic.}

To motivate our approach, we give a heuristic argument here for the expectation that for $p\gg_\epsilon 1$, the sum of local intersection multiplicities $i_P(C.Z(m))$ at supersingular points on $C$ with $Z(m)$ is at most $(\frac{1}{2}+\epsilon)C.Z(m)$ as $m\rightarrow \infty$ using \Cref{cor_decay_sg,cor_decay_ssp}. The proof of \Cref{ssbound} verifies this expectation when we average over $m$.  In particular, this heuristic explains why we need a stronger decay estimate for superspecial points and why such decay should exist.
In order to just convey the basic idea, we will keep the argument presented here brief, even a little vague;  more precise statements and proofs will come later in \S\ref{sec_pf} and the reader may consult there for the precise statements and proofs. 

\Cref{cor_decay_sg,cor_decay_ssp} imply that for $p\gg 1$, the major contribution in $i_P(C.Z(m))$ comes from the intersection of $k[t]/t^{h_P}$ and $Z(m)$ (as the covolumes of $L_n$ increase). The intersection multiplicity of $k[t]/t^{h_P}$ and $Z(m)$ is at most $h_P$ times the number $b(m,P)$ of branches of the formal completion $\widehat{Z(m)}_P\subset \widehat{\Sh}_{k,P}$. 

Indeed, $b(m,P)=\#\{v\in L_1 \mid Q(v)=m\}$. By studying the theta series associated to $L_1$, we have that $b(m,P)$ is roughly $|q_L(m)|/p^{t_P/2}$, where $q_L(m)$ denotes the $m$-th Fourier coefficient of the vector-valued Eisenstein series $E_0$ of weight $1+b/2$ defined in \S\ref{def_Eis} and $t_P$ (which is an even positive integer) is the type of $P$ defined in \S\ref{par_decomp_bL}. In particular, this bound is the worst when $t_P =2$, which by definition means that $P$ is superspecial. 

We now consider the extreme case when all non-ordinary points on $C$ are superspecial. Since the Hasse invariant is a weight $p-1$ modular form on $\Sh_k$, then we have \[\sum_{P\in C(k) \text{ superspecial}} h_P=(p-1)C.\omega,\] where $\omega$ is the line bundle of modular forms of weight one.
Then without considering the first inequalities in each of (1) and (2) of \Cref{cor_decay_ssp}, an initial estimate of $\sum_{P \text{ supersingular}} i_P(C.Z(m))$ is \[\sum_{P\in C(k) \text{ superspecial}} h_P|q_L(m)|/p=\frac{p-1}{p} |q_L(m)|(C.\omega),\] 
and a priori this should be a lower bound as we have ignored tangencies of order greater than $h_P$. 

On the other hand, as we discuss in \Cref{asymp_glo}, based on Borcherds theory, this is roughly the same size as the global intersection $C.Z(m)$.
 Thus we need some extra input, which is exactly given by \Cref{cor_decay_ssp}; this result lets us replace $h_P$ by $h_P/2$ for the major term in $i_P(C.Z(m))$ and then obtain our expectation.

We can see how this works in the simplest situation, when $C$ intersects all local formal branches of $\widehat{Z(m)}_P$ transversely. Then $i_P(C.Z(m))=b(m,P)$. On the other hand, since the singular locus in the non-ordinary locus in $\Sh_k$ consists of the supersingular points, then $h_P\geq 2$ for all superspecial points and thus the total number of superspecial points on $C$ is at most $\frac{p-1}{2}C.\omega$ and thus by the above estimate of $b(m,P)$, we see that $\sum_{P \text{ supersingular}} i_P(C.Z(m))\leq (\frac{1}{2}+\epsilon)(C.Z(m))$.
\end{para}
\begin{para}\textbf{An example of a formal curve.}\label{formal_example}
We will now construct a formal curve $\Spf k[[t]]\subset \Sh_k$ with closed point $P$ where the local multiplicities $i_P(C.Z(m_i))$ grows exponentially fast for appropriate sequences of integers $m_i$. Our example will in fact be of where $P$ is ordinary. For ease of exposition, we assume that the quadratic lattice has even rank, and consequently let $b = 2c$. We may arrange for $L_1$, the $\bZ$-module of special endomorphisms, to have rank $2c = b$ and to have discriminant prime to $p$. Suppose that $e_1, f_1, e_2, f_2, \hdots e_c,f_c$ is an orthogonal $\bZ$-basis of $L_1$. Let $\cL_n = L_n\otimes \bZ_p$ denote the module of (formal) special endomorphisms $\cA[p^{\infty}] \bmod t^n$. 

 Serre--Tate theory yields the existence of coordinates $\{q_i,q'_i: 1\leq i\leq c \}$ such that the formal neighborhood of $\Sh_{k}$ at $P$ is given by $\Spf k[[q_i-1, q'_i-1]]$. Moreover, the local equation defining the locus where the formal special endomorphism $\sum_{i=1}^c(\lambda_i e_i+ \mu_i f_i) \in \cL_1,\ \lambda_i,\mu_i \in \bZ_p$ deforms is just $\displaystyle \prod_{i=1}^c q_i^{\lambda_i}{q'}^{ \mu_i} -1$. Note that this has following consequence: if $f$ is the local equation defining the locus where some special endomorphism $w$ deforms, then $f^p$ is the equation for $pw$. 
 
 We now choose $\mu_i$ to be irrational $p$-adic integers which are ``very well approximated'' by actual integers. Specifically, choose $\mu_i = \mu = \sum a_np^n $ where $0\leq a_n \leq p-1$ and $a_0 = 1$. We will choose the precise values for $a_n$, $n\geq 1$ below. 
 
 We will now construct our formal curve to satisfy the property that $\cA/\Spf \overline{\bF}_p[[t]]$ admits no non-zero special endomorphisms, but $\cA[p^\infty]/\Spf k[[t]]$ admits special endomorphisms by $\textrm{Span}_{\bZ_p}\{e_i + \mu f_i\}_{i=1}^c \subset \cL_1$. Choosing $\Spf k[[t]] \subset \Sh_k$ to be defined by the quotient map $\rho: k[[q_i,q'_i]]\rightarrow k[[t]]$, with $\rho(q_i) = (1+\alpha_i t)^{-\mu}$ and $\rho(q'_i) = (1+\alpha_i t)$, where $\alpha_i \in k$ are linearly independent over $\bF_p$, is one such example and we will treat this example. 
 
 With this setup, we are now prepared to compute the lattices $\cL_N$, and therefore also $L_N$ and $i_P(\Spf k[[t]].Z(m_i))$. The assumption that the elements $\alpha_i\in k$ are $\bF_p$-linearly independent and $\mu\in \bZ_p^\times$ implies that the local equation defining the locus in $\Spf \overline{\bF}_p[[t]]$ such that any primitive $w\in \textrm{Span}\{e_1\hdots e_c \}$ deforms is just $t$. As the endomorphisms $e_i + \mu f_i, 1\leq i \leq c$ extend to the whole of $\Spf k[[t]]$, we have $\cL_2, \dots, \cL_p = \textrm{Span} \{pe_1, \dots, pe_c, e_1 + \mu f_1, \dots, e_c + \mu f_c \}$; $\cL_{p+1},\dots, \cL_{p^2} = \textrm{Span} \{p^2e_1, \dots, p^2e_c, e_1 + \mu f_1, \dots, e_c + \mu f_c \}$; and we finally have $\cL_{p^{n-1} + a}=\textrm{Span} \{p^ne_1, \dots, p^ne_c, e_1 + \mu f_1, \dots, e_c + \mu f_c \}$, where $a\geq 1$ and $p^{n-1} + a \leq p^{n}$. Finally, we have that $L_N = L_1 \cap \cL_N$ (with the intersection in $\cL_1$). 
 
 The fact that $\displaystyle \mu = \sum_{n\geq 0}a_n p^n$ implies that $v_{i,0} = e_i+a_0f_i \in L_p$, $v_{i,1} = e_i + (a_0+a_1p)f_i\in L_{p^2}, \hdots $, $v_{i,n} = e_i + (a_0 + a_1p + \hdots a_np^n)f_i \in L_{p^{n+1}}$, etc. We finally choose our sequence of $a_n$ -- recall that we have already chosen $a_0 = 1$. To that end, define $n_0 = 0$, and recursively define $n_{j+1} = p^{2n_j}$. We define $a_{n_j} = 1$ and $a_n = 0$ if $n\neq n_j,\, \forall j \in \bZ_{\geq 0}$. For any positive integer $j_0$, we see that $v_{i,n_{j_0+1} -1} = e_i+(\sum_{j=0}^{j_0}p^{n_j})f_j \in L_{p^{n_{j+1}}} $. It is easy to see that $m_{j} :=  Q(v_{i,n_{j+1}-1}) \asymp p^{2n_j}$. Therefore, we have that $i_P(\Spf k[[t]].Z(m_j)) \geq p^{n_{j+1}}$, whose size is clearly exponential in $m_j$! We have therefore constructed an example of a formal curve, as well as a sequence of special divisors $Z(m_j)$, such that $i_P(\Spf k[[t]].Z(m_j))$ is exponential in $m_j$. In fact, $L_{p^{n_{j+1}}}$ contains a rank-$c$ sublattice with discriminant $\asymp p^{2cn_j}$ (spanned by $\{v_{i, n_{j+1}-1}\}_{i=1}^c$). Therefore, when $c>2$, by choosing our initial values $Q(e_i), Q(f_i)$ carefully, we may even arrange for $i_P(\Spf k[[t]]. Z(m))$ growing exponentially in $m_j$ (and therefore growing faster than any polynomial in $m$) for \emph{a density one} set of $m \in [m_j,m_j^N]$. 
 
 In \cite{MST}, we are able to get around this difficulty because $c \leq 2$, and hence our lattices all have relatively small rank. Indeed, in that setting, the lattices $L_{p^{n_{j+1}}}$ may contain sublattices with discriminant logarithmic in $p^{n_{j+1}}$, but these sublattices necessarily have rank bounded above by 2, and the set of integers represented by rank two positive definite lattices has density zero. 

\end{para}

\section{The $F$-crystal $\bbL_\cris$ on local deformation spaces of supersingular points}\label{sec_three}

The goal of this section and \S\S \ref{sec_decay_sg} and \ref{sec_decay_ssp}
is to prove Theorems \ref{cor_decay_sg} and \ref{cor_decay_ssp} by analyzing the deformation behavior of special endomorphisms at supersingular points.

To set up this analysis, in this section, we compute $\bbL_\cris$ over the formal neighborhoods of supersingular points in $\cS_{\bar{\bF}_p}$. As in \cite[\S 3]{MST}, we first compute $\bbL_{\cris,P}(W)$ at a supersingular point $P$, which is a quadratic space over $W:=W(\bar{\bF}_p)$ with a $\sigma$-linear Frobenius action $\varphi$, and then we use Kisin's work \cite{Kisin} to obtain $\bbL_\cris$ over the formal neighborhood of $P$. Here we use the work of Ogus \cite[\S 3]{Ogus79} to compute $\bbL_{\cris,P}(W)$ while we follow \cite{HP} in \cite{MST}; the extra input is \cite[Thm.~3.21]{Ogus79}. 

In \cite{Ogus79}, he uses the notion K3 crystals \cite[Def.~3.1]{Ogus79}, which are of weight $2$; supersingular K3 crystals are equivalent to a Tate twist applied to our $\bbL_{\cris,P}(W)$ (which are weight $0$). Our convention is the same as that in \cite{HP}. In particular, our Frobenius $\varphi$ differs from the Frobenius in \cite{Ogus79} by a factor of $1/p$. For the convenience of the reader, we give references to \cite{Ogus79} whenever possible in this paper and the reader may check \cite{MST} for the references to \cite{HP}.

\subsection*{The $F$-crystal $\bbL_\cris$ at a supersingular point}
\begin{para}\label{par_decomp_cL}
Set $k=\bar{\bF}_p, W=W(k), K=W[1/p]$ and let $\sigma$ denote the usual Frobenius action on $K$. Given a supersingular point $P$, $\bL:=\bbL_{\cris,P}(W)$ is equipped with a quadratic form $\bbQ$ (see \S\ref{def_bbL}) and a $\sigma$-linear Frobenius action $\varphi$. We note that $\varphi$ is not a endomorphism of $\bL$, but is a $\sigma$-linear map $\bL_{\cris,P}(W)\rightarrow \frac{1}{p}\bL_{\cris,P}(W)$. Let $\la-,-\ra$ denote the bilinear form on $\bL$ given by $\la x, y\ra=\bbQ(x+y)-\bbQ(x)-\bbQ(y)$. By the definition of $\bbQ$, we have $\la \varphi(x),\varphi(y)\ra=\sigma(\la x, y\ra)$. 

Let $\cL$ denote the $\bZ_p$-lattice of special endomorphisms the $p$-divisible group $A_P[p^\infty]$, where $A_P:=\cA^\univ_P$. By Dieudonn\'e theory, $\cL=\bL^{\varphi=1}$. Since $P$ is supersingular, $\rk_{\bZ_p} \cL=\rk_W \bL=\rk_{\bZ} L$ and $\bL\subset \cL\otimes_{\bZ_p} K$. 

By \cite[Thm.~3.4]{Ogus}, there is a decomposition of $\bZ_p$-quadratic lattices $(\cL, \la, \ra)=(\cL_0,\la,\ra_0)\oplus (\cL_1,\la,\ra_1)$, where $p\mid \la,\ra_0$, both $\frac{1}{p}\la,\ra_0$ and $ \la,\ra_1$ are perfect, and $2\mid \rk_{\bZ_p} \cL_0$. Thus $p\la,\ra_0$ induces a perfect $\bF_p$-valued quadratic form on the $\bF_p$-vector space $\frac{1}{p}\cL_0/\cL_0$; we also denote this quadratic form by $p\la,\ra_0$.
The type of $P$, denote by $t_P$, is defined to be $\rk_{\bZ_p} \cL_0$;\footnote{By \cite[p.~327]{Ogus01}, $t_P/2$ is the Artin invariant if $A_P$ is the Kuga--Satake abelian variety associated to a $K3$ surface.} by \cite[Cor.~3.11]{Ogus79}, $2\mid t_P, 2\leq t_P\leq \rk L$. We say $P$ is \emph{superspecial} if $t_P=2$; otherwise, we say $P$ is non-superspecial.
\end{para}

\begin{para}\label{par_decomp_bL}
The above decomposition of $\cL$ induces a decomposition of $\bL$, which allows us to compute $\bL$ explicitly.
More precisely, by \cite[Thms.~3.5,3.20]{Ogus79}, the $W$-quadratic lattice $\bL$ with Frobenius action decomposes as $\bL_0\oplus \bL_1$, where $\bL_0 \bmod \cL_0\otimes W \subset (\frac{1}{p}\cL_0/\cL_0)\otimes k$ is totally isotropic subspace with respect to $\la,\ra_0$ of dimension $t_P/2$ satisfying certain conditions and $\bL_1=\cL_1\otimes W$.
\end{para}

We first provide explicit descriptions of $\cL_0$ and $\bL_0$.

\begin{lemma}[Ogus]\label{L''_p0}
Set $n=t_P/2$ and $\lambda\in\bZ_{p^2}^\times$ such that $\lambda^2 \bmod p \in \bF_p$ is a quadratic non-residue.
 There exists a $\bZ_p$-basis $\{e_1, \dots, e_n$, $f_1, \dots, f_n\}$ of $\cL_0$ and the quadratic form $\la,\ra_0$ is given by $\langle e_i,f_i\rangle_0 = p$ for $i > 1$, $\langle e_1,e_1\rangle_0 = 2p$, $\langle  f_1,f_1\rangle_0 = -2 \lambda^2p$, and $\la v, w\ra_0=0$ for all $(v,w)\in \{e_1,\dots,e_n, f_1,\dots,f_n\}^2$ such that $(v,w)\neq (e_i,f_i),(f_i,e_i), i>1$ or $(e_1,e_1),(f_1,f_1)$.
\end{lemma}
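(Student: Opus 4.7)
The plan is to deduce this lemma from Ogus's classification of supersingular K3 crystals in \cite[Thms.~3.4, 3.21]{Ogus79}, combined with the standard classification of unimodular quadratic $\bZ_p$-lattices.

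First I would observe that, by the assumptions in \S\ref{par_decomp_cL}, the rescaled form $\frac{1}{p}\la -,- \ra_0$ makes $M := \frac{1}{p}\cL_0$ into a unimodular quadratic $\bZ_p$-lattice of rank $t_P = 2n$ with the same underlying $\bZ_p$-module as $\cL_0$. Consequently, exhibiting a basis of $\cL_0$ with the claimed Gram matrix under $\la -,- \ra_0$ is equivalent to exhibiting a basis of $M$ whose Gram matrix under $\frac{1}{p}\la -,- \ra_0$ is block-diagonal with first block $\diag(2, -2\lambda^2)$ and remaining blocks the standard hyperbolic plane. This reduces the problem to a statement about unimodular forms, where the factor of $p$ has been absorbed.

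Next I would apply the classification: for $p$ odd, every unimodular quadratic $\bZ_p$-lattice of rank $2n$ is isometric to either $H^{\oplus n}$ (split) or to $M_{\mathrm{an}} \oplus H^{\oplus (n-1)}$ (non-split), where $H$ is the hyperbolic plane and $M_{\mathrm{an}}$ is the unique anisotropic rank-$2$ unimodular $\bZ_p$-lattice, the two classes being distinguished by their discriminant class in $\bZ_p^\times/(\bZ_p^\times)^2$. I would then invoke Ogus's discriminant computation \cite[Thm.~3.21]{Ogus79} (translated from the weight-$2$ K3 normalization used there to our weight-$0$ normalization for $\bbL_\cris$) to conclude that $M$ lies in the non-split class. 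On each hyperbolic summand (indexed by $i \geq 2$) I would pick a standard isotropic basis $e_i, f_i$, and on $M_{\mathrm{an}}$ pick a basis realizing $\diag(2, -2\lambda^2)$; the latter Gram matrix is indeed anisotropic because $2(x-\lambda y)(x+\lambda y) = 0$ in $\bZ_p$ forces $x = y = 0$ (using $\lambda \notin \bZ_p$). Rescaling the pairing by $p$ yields the required basis of $\cL_0$.

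The main obstacle is verifying that $M$ is of non-split type. This is the substantive step drawing on the theory of supersingular K3 crystals, and amounts to transferring Ogus's discriminant formula across the weight normalization; once that is pinned down, the remainder is bilinear-algebraic bookkeeping.
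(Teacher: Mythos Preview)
Your approach is correct and essentially the same as the paper's: both reduce to showing that $(\cL_0,\frac{1}{p}\la,\ra_0)$ is the non-split unimodular $\bZ_p$-lattice of rank $2n$, then read off a basis. The paper's one-line proof cites \cite[Thm.~3.4]{Ogus79} together with the \emph{proof} of \cite[Lem.~3.15]{Ogus79} (where Ogus actually pins down the non-split isometry class), whereas you cite Thm.~3.21 for this step; Lemma~3.15 is the more apposite reference for the discriminant/non-split claim, but the mathematical content of your argument is identical.
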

\begin{proof}
The assertion follows from Theorem 3.4 and the proof of Lemma 3.15 in \cite{Ogus79}.
\end{proof}

\begin{lemma}[Ogus]\label{vforbL}
Fix the $\bZ_p$-quadratic space $\cL_0$ as in \Cref{L''_p0}. All possible $\bL$ attached to a supersingular point $P$ with $(\bL_0)^{\varphi=1}=\cL_0$ are given by $\bL_0=\Span_W\{v,\sigma(v),\dots,\sigma^{n-1}(v)\}+\cL_0\otimes W$, where $v\in \frac{1}{p}(\cL_0\otimes_{\bZ_p}W)$ satisfying the following conditions: 
\begin{enumerate}
    \item $\Span_W\{v, \sigma(v), \dots, \sigma^{2n-1}(v)\}=\frac{1}{p}\cL_0\otimes_{\bZ_p} W$. 
    
    \item $\Span_W\{v, \sigma(v), \dots, \sigma^{n-1}(v)\}$ is isotropic for $\la,\ra_0$.
    
    \item $\langle v,\sigma^n( v)\rangle_0 = 1/p$,
\end{enumerate}
where we use $\sigma$ to denote the action $1\otimes \sigma$ on $\cL_0\otimes_{\bZ_p} K$.
The quadratic form and $\varphi$ action on $\bL_0$ are induced by those on $\cL_0\otimes_{\bZ_p} K$ via $\bL_0\subset \cL_0\otimes_{\bZ_p} K$, where $\varphi$ on $\cL_0\otimes_{\bZ_p} K$ is given by $1\otimes \sigma$. Finally, the set of vectors $\{v, \sigma(v), \hdots, \sigma^{n-1}(v), p\sigma^{n}(v), \hdots, p\sigma^{2n-1}(v) \}$ forms a $W$-basis for $\bL_0$. 
\end{lemma}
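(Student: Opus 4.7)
The plan is to derive this lemma from Ogus's classification of supersingular K3 crystals in \cite[Thms.~3.20,~3.21]{Ogus79}, after the Tate twist that converts his weight-$2$ convention to our weight-$0$ convention. First I would recall from \S\ref{par_decomp_bL} that specifying $\bL_0$ is equivalent to specifying an $n$-dimensional totally isotropic $k$-subspace $\bar{\bL}_0 := \bL_0/(\cL_0\otimes W)$ inside $(\frac{1}{p}\cL_0/\cL_0)\otimes_{\bF_p} k$, endowed with the perfect pairing induced by $p\la,\ra_0$, subject to Ogus's strictness/non-degeneracy condition that its $\sigma$-iterates $\sum_{i=0}^{2n-1}\sigma^i(\bar{\bL}_0)$ fill up the whole $2n$-dimensional ambient space.

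The next step is to show that any such $\bar{\bL}_0$ is cyclic under $\sigma$, meaning $\bar{\bL}_0=\Span_k\{\bar v,\sigma(\bar v),\dots,\sigma^{n-1}(\bar v)\}$ for some $\bar v$. This is a standard linear-algebra fact for supersingular $F$-crystal structures: if no cyclic generator existed, then $\bar{\bL}_0$ would be contained in a proper $\sigma$-stable subspace of $(\frac{1}{p}\cL_0/\cL_0)\otimes k$, contradicting the strictness condition above. Lifting $\bar v$ to $v\in\frac{1}{p}(\cL_0\otimes W)$, I would then translate the three conditions of the lemma: (1) is exactly Ogus's strictness condition; (2) is the total isotropy of $\bar{\bL}_0$; and (3) is a normalization --- the pairing $\la v,\sigma^n(v)\ra_0$ lies in $\frac{1}{p}W$ and has nonzero image in $\frac{1}{p}W/W$ precisely when $\bar v$ is primitive in the quotient, so after rescaling $v$ by a suitable unit in $W^\times$ one arranges $\la v,\sigma^n(v)\ra_0=1/p$.

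Finally, I would verify the basis claim. Condition (1) implies that $\{v,\sigma(v),\dots,\sigma^{2n-1}(v)\}$ is a $W$-basis of $\frac{1}{p}\cL_0\otimes W$. Under the quotient map to $(\frac{1}{p}\cL_0/\cL_0)\otimes k$, the first $n$ elements give a $k$-basis of $\bar{\bL}_0$, while $p\sigma^i(v)\in\cL_0\otimes W$ for $i\geq n$ automatically. Combining these yields the advertised $W$-basis $\{v,\dots,\sigma^{n-1}(v),p\sigma^n(v),\dots,p\sigma^{2n-1}(v)\}$ of $\bL_0=\Span_W\{v,\dots,\sigma^{n-1}(v)\}+\cL_0\otimes W$.

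The main obstacle will be carefully matching Ogus's conventions with ours: his Frobenius differs from ours by a factor of $p$, so all pairing and Frobenius relations must be rescaled; the roles of $\cL_0$ and $\frac{1}{p}\cL_0$ get interchanged between the two setups because of the factor $p\mid\la,\ra_0$ in the pairing on $\cL_0$; and Ogus's non-degeneracy condition, stated in terms of characteristic subspaces, must be transcribed into the concrete condition (1). Once this book-keeping is settled, the conditions (1)--(3) and the basis assertion follow directly from \cite[Thms.~3.20,~3.21]{Ogus79}.
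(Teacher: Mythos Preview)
Your overall strategy matches the paper's: reduce to Ogus's classification (\cite[Thm.~3.5]{Ogus79}) of $\bL_0$ by an $n$-dimensional totally isotropic subspace $\overline{H}$ of $\cL_0\otimes k$ satisfying a strictness condition, extract a cyclic generator $\bar v$ from the discussion preceding \cite[Thm.~3.21]{Ogus79}, lift it, and normalize. Condition~(1) does lift from the mod~$p$ statement by Nakayama, and your treatment of the normalization~(3) is fine.

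However, there is a genuine gap in your handling of condition~(2). You write that ``(2) is the total isotropy of $\bar{\bL}_0$'', but condition~(2) demands that $\Span_W\{v,\sigma(v),\dots,\sigma^{n-1}(v)\}$ be isotropic for $\la,\ra_0$ \emph{over $W$}, not merely modulo~$p$. Concretely, for $v\in\frac{1}{p}\cL_0\otimes W$ one has $\la\sigma^i(v),\sigma^j(v)\ra_0\in\frac{1}{p}W$, and the isotropy of $\bar{\bL}_0$ only forces these pairings to lie in $W$, not to vanish. An arbitrary lift of $\bar v$ will therefore not satisfy~(2). The paper closes this gap by an explicit Hensel's lemma step: one first lifts $\overline{H}$ to an \emph{isotropic} $n$-dimensional $W$-submodule $H_0\subset\cL_0\otimes W$ together with a cyclic generator $\tilde e'$ whose $\sigma$-iterates span $H_0$, and then sets $v=\frac{1}{p}\tilde e'$. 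You need to insert this argument (or an equivalent one) before the translation of~(2) goes through.
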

\begin{proof}
Consider the inclusion $\cL_0\otimes W \subset \bL_0\subset \frac{1}{p} \cL_0\otimes W$. Recall from \S\ref{par_decomp_cL} that the quadratic form $p^{-1}\langle,\rangle_0$ yields a perfect bilinear form on $\cL_0\otimes W$. By \cite[Theorem 3.5]{Ogus79}, the data of $\bL_0$ is in bijection with the data of an $n$-dimensional subspace $\overline{H} \subset \cL_0\otimes k$ which is isotropic for $p^{-1}\langle,\rangle_0$, where $\overline{H}$ satisfies conditions 3.5.2 and 3.5.3 of \emph{loc. cit.}.\footnote{Ogus proved that the isomorphism classes so called K3 crystal (\cite[Def.~3.1]{Ogus79}) are in bijection with the data in \cite[Thm.~3.5]{Ogus79} described here; indeed, the isomorphism classes of K3 crystals in Ogus sense are isomorphism classes of $\bL$ for supersingular points by \cite{HP}.} Let $H \subset \cL_0\otimes W$ denote any lift of $\overline{H}$ and then the crystal $\bL_0$ corresponding to $\overline{H}$ is defined to be $\frac{1}{p}H + \cL_0\otimes W$. Note that $\bL_0$ only depends on $\overline{H}$ and not on $H$ itself, and that $\overline{H}$ is indeed the kernel of the natural map  $\cL_0 \otimes k \rightarrow \bL_0\otimes k$. 

The discussion in the paragraph above Theorem 3.21 in \cite{Ogus79} implies that there exists a vector $e'\in \overline{H}$ such that $\{e',\hdots, \sigma^{n-1}(e')\}$ yields a basis of $\overline{H}$, and the set $\{e',\hdots, \sigma^{2n-1}(e')\}$ is a basis of $\cL_0 \otimes k$. Note that although the discussion in \emph{loc. cit.} is in the context of $ \varphi^{-1}(\overline{H}) \subset \cL_0\otimes W$ and not $\overline{H}$, everything applies to our setting too, by defining $e'$ to be $\sigma(e)$, where $e$ is as in \cite[p.~33]{Ogus79}, and note that $\varphi(\cL_0)=\cL_0$. 

A straightforward application of Hensel's lemma yields a specific choice of an \emph{isotropic} $n$-dimensional $H_0 \subset \cL_0\otimes W$ along with a vector $\tilde{e}'$, with the property that $H_0$ and $\tilde{e}'$ reduce to $\overline{H}$ and $e'$ mod $p$ such that $H_0$ is the $W$-span of $\tilde{e}', \sigma(\tilde{e}'), \hdots, \sigma^{n-1}(\tilde{e}')$. It then follows that the $W$-span of $\tilde{e}', \sigma(\tilde{e}'), \hdots, \sigma^{2n-1}(\tilde{e}')$ equals $\cL_0 \otimes W$. By replacing $\tilde{e}'$ by an appropriate $W^{\times}$-multiple, we may also assume that $\frac{1}{p}\langle\tilde{e}',\sigma^{n}(\tilde{e}')\rangle_0 = 1$. The the lemma follows by defining $v = \frac{1}{p}\tilde{e}'$. 
%
\end{proof}

\begin{lemma}\label{cyclicfrob}
Set $v_i=\sigma^{i-1}(v), i=1,\dots, 2n$ for the vector $v$ in \Cref{vforbL}. Then there exist vectors $w_1,\dots, w_n\in \bL_0$ such that
\begin{enumerate}
    \item $v_1,\dots,v_n, w_1\dots,w_n$ form a $W$-basis of $\bL_0$;
    \item The Gram matrix of $\la,\ra_0$ with respect to this basis is $\left[
\begin{array}{c|c}
0&I\\
\hline
I&0
\end{array}
\right]$;
\item The Frobenius $\varphi$ on $\bL_0$ with respect to this basis is of form $B_0\sigma$, where\footnote{All empty entries in the matrix are $0$.} \[
B_0=
\left[
\begin{array}{cccc|cccc}
0&\hphantom{\textrm{-}b_1}&\hphantom{-b_1}&\hphantom{-b_1}&\hphantom{-b_1}&&& p\\
1&&&&&&&pb_1\\
&\ddots&&&&&&\vdots\\
&&1&&&&&pb_{\scaleto{n-1}{4pt}}\\
\hline
&&&p^{-1}&-b_1&\hdots&-b_{\scaleto{n-1}{4pt}}&0\\
&&&&1&&&0\\
&&&&&\ddots&&\vdots\\
&&&&&&1&0\\
\end{array}
\right] \text{ with } b_i\in W.
\]
\end{enumerate}
\end{lemma}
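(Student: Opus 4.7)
The plan is to construct $w_1,\dots,w_n$ via a Frobenius-driven recursion, starting from the forced base case $w_1 := pv_{n+1}$, and then to verify the three claims by induction together with one form-preservation calculation at the end. Throughout I would use the semilinearity identity $\langle\varphi(x),\varphi(y)\rangle_0 = \sigma\langle x,y\rangle_0$ (equivalently, the adjoint relation $\langle x,\varphi(y)\rangle_0 = \sigma\langle\varphi^{-1}(x),y\rangle_0$) and the three properties of $v$ in \Cref{vforbL}: cyclicity, isotropy of $\Span_W\{v,\sigma v,\dots,\sigma^{n-1}v\}$, and the normalization $\langle v,\sigma^n v\rangle_0 = 1/p$.

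First I would set $w_1 := pv_{n+1}\in\bL_0$. Expanding $\langle v_j,w_1\rangle_0 = p\sigma^{j-1}\langle v,\sigma^{n-j+1}v\rangle_0$, the isotropy and normalization conditions yield $\langle v_j,w_1\rangle_0 = \delta_{j,1}$, and similarly $\langle w_1,w_1\rangle_0 = p^2\sigma^n\langle v,v\rangle_0 = 0$. Assuming $w_1,\dots,w_k$ have been constructed satisfying the hyperbolic pairing relations, I would define $b_k := -\langle v_1,\varphi(w_k)\rangle_0$ and $w_{k+1} := \varphi(w_k) + b_k w_1$ for $k = 1,\dots,n-1$. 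Two integrality points must be checked: (i) $b_k \in W$, which follows from the self-duality of $\bL_0$ under $\langle,\rangle_0$ (combine $[\bL_0 : \cL_0\otimes W] = p^n$ from \S\ref{par_decomp_bL} with the fact from \S\ref{par_decomp_cL} that $\langle,\rangle_0$ on $\cL_0\otimes W$ is $p$ times a perfect form, so the discriminant calculation forces $\bL_0$ to have unit discriminant); and (ii) $\varphi(w_k)\in\bL_0$ for $k \le n-1$, because an easy induction shows $w_k\in\Span_W\{pv_{n+1},\dots,pv_{n+k}\}$, which $\varphi = 1\otimes\sigma$ shifts into $\Span_W\{pv_{n+2},\dots,pv_{n+k+1}\}\subset\bL_0$. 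The adjoint identity then propagates the pairing relations: $\langle v_j,\varphi(w_k)\rangle_0 = \sigma\delta_{j-1,k}$ for $j\ge 2$ (using $\varphi^{-1}(v_j) = v_{j-1}$), and $\langle w_l,\varphi(w_k)\rangle_0$ reduces via $\varphi^{-1}(w_1) = pv_n$ and $\varphi^{-1}(w_l) = w_{l-1} + \sigma^{-1}(b_{l-1})pv_n$ for $l\ge 2$, with every resulting term vanishing because $\langle v_n,w_k\rangle_0 = 0$ for $k < n$.

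Once the pairings are established, $\{v_i,w_j\}$ is a hyperbolic system with unit Gram discriminant; since $\bL_0$ is self-dual, this system must be a $W$-basis, yielding (1) and (2). The Frobenius shape in the first $2n-1$ columns of $B_0$ is then immediate: $\varphi(v_i) = v_{i+1}$ for $i<n$, $\varphi(v_n) = \sigma^n v = p^{-1}(pv_{n+1}) = p^{-1}w_1$, and $\varphi(w_k) = -b_k w_1 + w_{k+1}$ for $k<n$ by construction. The last column is the key check: writing $\varphi(w_n) = \sum_i\alpha_i v_i + \sum_j\beta_j w_j$ and pairing with $w_k$ via the adjoint identity gives $\alpha_k = \sigma\langle\varphi^{-1}(w_k),w_n\rangle_0$, which evaluates to $\alpha_1 = p$ and $\alpha_k = pb_{k-1}$ for $k\ge 2$; pairing with $v_j$ for $j\ge 2$ gives $\beta_j = \sigma\delta_{j-1,n} = 0$. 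The remaining coefficient $\beta_1$ cannot be accessed by the adjoint (since $\varphi^{-1}(v_1)\notin\bL_0$), but form preservation supplies $\langle\varphi(w_n),\varphi(w_n)\rangle_0 = \sigma\langle w_n,w_n\rangle_0 = 0$; the expansion collapses to $2p\beta_1 = 0$ because the only surviving cross term is $2\beta_1\langle pv_1,w_1\rangle_0 = 2p\beta_1$, forcing $\beta_1 = 0$.

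The main obstacle is securing the integrality $b_k\in W$: without the self-duality of $\bL_0$, the recursion would produce $b_k$ with a $1/p$-denominator and break the integral cyclic structure. Once that self-duality is extracted from the index and perfectness data of \S\S\ref{par_decomp_cL}--\ref{par_decomp_bL}, the remaining steps are careful bookkeeping with the adjoint identity.
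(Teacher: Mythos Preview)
Your proof is correct and reaches the same conclusion as the paper, but via a slightly different organizing principle that is worth noting. The paper first obtains the $w_i$ by an upper-unipotent Gram--Schmidt modification of $pv_{n+1},\dots,pv_{2n}$ to force the hyperbolic Gram matrix, and only afterwards computes how $\varphi$ acts on them (deriving $\varphi(w_i)=-b_iw_1+w_{i+1}$ as a consequence). You instead \emph{define} the $w_i$ by the Frobenius recursion $w_{k+1}:=\varphi(w_k)+b_kw_1$ and then verify the pairing inductively. Both constructions produce the same vectors (each $w_i$ lies in $\Span_W\{pv_{n+1},\dots,pv_{n+i}\}$ and is pinned down by $\langle v_j,w_i\rangle_0=\delta_{ij}$), so the approaches converge. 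Your ordering has the pleasant feature that the middle columns of $B_0$ (those for $\varphi(w_1),\dots,\varphi(w_{n-1})$) are immediate from the definition, whereas the paper must compute them separately. The treatment of the last column and the final $\beta_1=0$ step via $\langle\varphi(w_n),\varphi(w_n)\rangle_0=0$ is identical in both. One small remark: for the integrality $b_k\in W$ you invoke self-duality of $\bL_0$, which is valid, but the paper's more direct observation---that $pv_1,\varphi(w_k)\in\cL_0\otimes W$ and $p\mid\langle{,}\rangle_0$ there---gives the same conclusion without the discriminant computation.
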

\begin{proof}
By \Cref{vforbL}(1), $\{v_1,\dots,v_n, pv_{n+1},\dots, pv_{2n}\}$ is a basis of $\bL_0$ over $W$. By \Cref{vforbL}(2)(3) and the fact that $\la \varphi(x),\varphi(y)\ra_0=\sigma(\la x, y\ra_0)$, we have that $\la v_i, pv_j\ra_0=0$ for $j\leq i+n-1$ and $\la v_i, pv_{i+n} \ra_0=1$; thus 
by modifying $pv_{n+1}, \dots, pv_{2n}$ by an upper-unipotent matrix, we obtain $w_1,\dots, w_n$ satisfying condition (2). Moreover, the left half of $B_0$ in condition (3) also follows from the definition of $w_i$. 

We now consider the top-right block of $B$. To deduce that the first $n-1$ columns of this block vanish, (2) shows that it suffices to prove $\langle \varphi(w_i),w_j\rangle_0=0$ for $1\leq i\leq n-1$ and $1\leq j \leq n$. By definition, the $w_i, 1\leq i \leq n-1$ are $W$-linear combinations of $pv_{n+1} = p\varphi^n(v), \hdots, pv_{2n-1}=p\varphi^{n-2}(v) $ and thus $\varphi(w_i)$ is contained in $\Span_W\{ pv_{n+1},\hdots, pv_{2n}\}=\Span_W\{w_1,\dots, w_n\}$. Since $\Span_W\{w_1,\dots, w_n\}$ is isotropic by \Cref{vforbL}(2), then $\langle \varphi(w_i),w_j\rangle_0$ for $1\leq i\leq n-1$ and $1\leq j \leq n$ as required. In order to prove that the last column of this block is as claimed in the lemma, it suffices to prove that $\langle \varphi(w_n) , w_1\rangle_0 = \sigma(\langle w_n , \varphi^{-1}(w_1)\rangle_0) =p $ and $p\mid \langle \varphi(w_n) , w_j\rangle_0 = \sigma(\langle w_n , \varphi^{-1}(w_j)\rangle_0)$ for $j\leq n$. Note that $\varphi^{-1}(w_1) = pv_n$ and then the first equality follows. For the rest, note that $\varphi$ gives a $\sigma$-linear endomorphism of $\cL_0\otimes W$ and $p\mid \la, \ra_0$ on $\cL_0$, thus $w_1\hdots w_n, \varphi(w_n)\in \cL_0\otimes W$ and $p\mid \langle \varphi(w_n),w_j \rangle_0 $ for all $j\leq n$.

 Similarly, for the bottom-right part of $B$, it suffices to show that $\la \varphi(w_i)-w_{i+1}, v_j\ra_0=0,$  $p\la \varphi(w_i), v_1\ra_0=-\la \varphi(w_n),w_{i+1} \ra_0, \, \forall 1\leq i\leq n-1, 2\leq j \leq n$ and $\la \varphi(w_n), v_j\ra_0=0, \, \forall 1\leq j \leq n$. Note that $\la \varphi(w_i)-w_{i+1}, v_j\ra_0=\la \varphi(w_i), v_j\ra_0 -\la w_{i+1}, v_j \ra_0=\la w_i, v_{j-1}\ra_0 -\la w_{i+1}, v_j \ra_0=0$ by condition (2). Then $\varphi(w_{i})=w_{i+1}+a_i w_1$ for some $a_i\in W$. Thus $w_{i+1}=\varphi(w_i-\sigma^{-1}(a_i)pv_n)$ and then $\la \varphi(w_n), w_{i+1}\ra_0=\sigma(\la w_n, w_i-\sigma^{-1}(a_i)pv_n \ra_0)=-pa_i$; in other words, $p\la \varphi(w_i), v_1\ra_0=-\la \varphi(w_n),w_{i+1} \ra_0 $. Moreover, $\la \varphi(w_n), v_j \ra_0=\sigma(\la w_n, v_{j-1} \ra_0)=0$ for $j\geq 2$. For $\la \varphi(w_n), v_1 \ra_0=:c$, by the above discussion, $\varphi(w_n)=pv_1+pb_1 v_2 + \cdots + pb_{n-1} v_n +c w_1$ and thus $\la \varphi(w_n), \varphi(w_n)\ra_0=2pc$; on the other hand, $\la \varphi(w_n), \varphi(w_n)\ra_0=\sigma(\la w_n, w_n \ra_0)=0$ and then $c=0$, which finishes the proof of the lemma.
\end{proof}

\begin{para}\label{change-basis}
Let $S_0$ denote the change-of-basis matrix from $\{e_i, f_i\}_{i=1}^n$ to $\{v_i, w_i\}_{i=1}^n$ in \Cref{cyclicfrob}. More precisely, $S_0\in M_{2n}(K)$ whose first (resp. last) $n$ columns are the coordinates of $v_i$ (resp. $w_i$) in terms of the basis $\{e_i,f_i\}_{i=1}^n$. For the simplicity of computations in \S\ref{sec_decay_sg}, let $S'_0$ the change-of-basis matrix from $\{e_i, f_i\}_{i=1}^n$ to $\{pv_i, w_i\}_{i=1}^n$. From the proof of \Cref{cyclicfrob}, $\Span_W\{e_i, f_i\}_{i=1}^n=\Span_W\{pv_i, w_i\}_{i=1}^n$; thus $S_0'\in \GL_{2n}(W)$. Moreover, by definition, $S_0=S_0'\left[
\begin{array}{c|c}
p^{-1}I&0\\
\hline
0&I
\end{array}
\right]$.
\end{para}

\begin{para}\label{embedding}
We now describe $\bL_1$ and $\cL_1$ defined in \S\S\ref{par_decomp_cL},\ref{par_decomp_bL}. Recall that $\bL_1=\cL_1\otimes W$, the Frobenius $\varphi$ on $\bL_1$ is given by $1\otimes \sigma$ and the quadratic form on $\bL_1$ is also induced by the one on $\cL_1$, so we only need to classify $\cL_1$.  Unlike $\cL_0$, which is completely determined by $t_P$ (see \Cref{L''_p0}), the $\bZ_p$-quadratic lattice $\cL_1$ depends on $\dim L$ and $\disc L$ (see \cite[Thm.~3.4]{Ogus79} and \cite[\S 4.3.1]{HP}). Since $\cL_1$ is self-dual, we have the following three cases:\footnote{Comparing to \cite[\S 3]{MST}, \S 3.2.1 in \emph{loc.~cit.~}is a special case of the split even dimensional case, \S 3.2.2 in \emph{loc.~cit.~}is a special case of the non-split even dimensional case, and \S 3.3 in \emph{loc.~cit.~}is a special case of the odd dimensional case.}
\begin{enumerate}
    \item $\dim_{\bZ_p} \cL_1=2m$ and there is an $m$-dimensional isotropic subspace of $\cL_1$ over $\bZ_p$. We call this the \emph{split} case. 
    \item $\dim_{\bZ_p} \cL_1=2m$ and there does \emph{not} exist an $m$-dimensional isotropic subspace of $\cL_1$ over $\bZ_p$.
    \item $\dim_{\bZ_p} \cL_1$ is odd.
\end{enumerate}
Note that for cases (2)(3), one may always embed $\cL_1$ into a split $\bZ_p$-quadratic lattice of larger dimension. Therefore, we deal exclusively with the \emph{split} case and we will remark in the proofs of the decay lemmas in \S\S\ref{sec_decay_sg}-\ref{sec_decay_ssp} that by the above embedding trick, the computation in the split case will also prove the decay lemmas in all other cases. We use $\{e'_i,f'_i\}_{i=1}^m$ to denote a $\bZ_p$-basis of $\cL_1$ in the split case such that the Gram matrix with respect to this basis is $\left[
\begin{array}{c|c}
0&I\\
\hline
I&0
\end{array}
\right]$.
\end{para}

\subsection*{Description of $\bbL_{\cris}$ at the formal neighborhood}
\begin{para}\label{Frob_vw}
Following \cite[\S\S 1.4-1.5]{Kisin}, we will describe the formal neighborhood of the Shimura variety at the supersingular point $P$, and also compute the $F$-crystal $\bbL_\cris$ over this formal neighborhood (see also \cite[\S 3.1.5, \S 3.2.1]{MST}). 
We first summarize Kisin's description in abstract terms, before providing an explicit description of the $F$-crystal in terms of the coordinates provided earlier in this section. 

Recall from \S\ref{par_decomp_cL} that the quadratic form $\bbQ $ on $\bL$ is compatible with the Frobenius $\varphi$ on $\bL$; moreover, $\bbL_\dR$ defined in \S\ref{def_bbL} admits the Hodge filtration and by the canonical de Rham-crystalline comparison, $\bL\otimes_W k$ is also equipped with a filtration and we call it the $\bmod\, p$ Hodge filtration. Let $\mu: \bG_{m,W} \rightarrow \SO(\bL,\bbQ)$ denote any co-character (which we shall refer to as ``the Hodge co-character'') whose mod $p$ reduction induces the above filtration. Let $U$ denote the opposite unipotent in $\SO(\bL,\bbQ)$ with respect to $\mu$, and let $\Spf R = \widehat{U} $ denote the completion of $U$ at the identity section. Pick $\sigma: R\rightarrow R$ to be a lift of the Frobenius endomorphism on $R \bmod p$.  Let $u$ be the tautological $R$-point of $U$. 

Then, by \cite[\S\S 1.4, 1.5]{Kisin} the complete local ring of the Shimura variety at $P$ is isomorphic to $\Spf R$. The $F$-crystal $\bbL_{\cris}(R)$ equals $\bL \otimes_W R$ as an $R$-module, and the Frobenius action on $\bbL_{\cris}(R)$, denoted by $\Frob$, is given by $\Frob = u\circ (\varphi \otimes \sigma)$.

We will now provide an explicit description in terms of coordinates of the above objects. By \Cref{cyclicfrob}(3) and Mazur's theorem on determining $\bmod\, p$ Hodge filtration using $\varphi$ (see for instance \cite[p.~411]{Ogus}), the $\bmod \, p$ Hodge filtration on $\bL\otimes_W k$ is given by \[\Fil^1\bL\otimes_W k=\Span_k\{\bar{w}_n\}, \Fil^0\bL\otimes_W k=\Span_k\{\bar{v}_i,\bar{w}_j,\bar{e}'_l,\bar{f}'_l\}_{i=1,\dots,n-1,j=1,\dots,n,l=1,\dots, m}, \Fil^{-1}\bL\otimes_W k=\bL\otimes_W k,\]
where $\bar{v}_i,\bar{w}_j,\bar{e}'_l, \bar{f}'_l$ denote the reduction of $v_i, w_j, e'_l, f'_l \bmod p$. Thus, with respect to the basis $\{v_i,w_i,e'_j,f'_j\}_{i=1,\dots,n, j=1,\dots,m}$, we choose the Hodge cocharacter $\mu: \bG_{m,W}\rightarrow \SO(\bL,\bbQ)$ in the local Shimura datum to be

\[ 
\mu(t) = \left[
\begin{array}{cccc|cccc|ccc}
1&&&&&&&&&&\\
&\ddots&&&&&&&&&\\
&&1&&&&&&&&\\
&&&t^{-1}&&&&&&&\\
\hline
&&&&1&&&&&&\\
&&&&&\ddots&&&&&\\
&&&&&&1&&&&\\
&&&&&&&t&&&\\
\hline
&&&&&&&&1&&\\
&&&&&&&&&\ddots&\\
&&&&&&&&&&1
\end{array}
\right].
\]

Moreover, there exist local coordinates $\{x_i,y_i,x'_j,y'_j\}_{i=1,\dots,n-1, j=1,\dots,m}$ such that the complete local ring $\widehat{\cO}_{\Sh,P}$ of $\Sh$ at $P$ is isomorphic to $\Spf R$, where $R= W[[x_i,y_i,x'_j, y'_j]]_{i=1,\dots,n-1, j=1,\dots,m}$. We define $\sigma: R\rightarrow R$, the operator that restricts to the usual Frobenius element on $W$ and which lifts the Frobenius endomorphism on $R\mod p$, to be  $\sigma(x_i)=x_i^p, \sigma(y_i)=y_i^p, \sigma(x'_j)=(x'_j)^p, \sigma(y'_j)=(y'_j)^p$.
The tautological point of the opposite unipotent in $\SO(\bL,\bbQ)$ with respect to $\mu$ has the following description in terms of our coordinates:

\[u=I+\left[
\begin{array}{cccc|cccc|cccccc}\label{unip}
&&&&&&&-y_1&&&&&&\\
&&&&&&&\vdots&&&&&&\\
&&&&&&&-y_{n-1}&&&&&&\\
x_1&\hdots&x_{n-1}&0&y_1&\hdots&y_{n-1}&Q&x'_1&\hdots&x'_{m}&y'_1&\hdots&y'_m\\
\hline
&&&&&&&-x_1&&&&&&\\
&&&&&&&\vdots&&&&&&\\
&&&&&&&-x_{n-1}&&&&&&\\
&&&&&&&0&&&&&&\\
\hline
&&&&&&&-y'_1&&&&&&\\
&&&&&&&\vdots&&&&&&\\
&&&&&&&-y'_m&&&&&&\\
&&&&&&&-x'_1&&&&&&\\
&&&&&&&\vdots&&&&&&\\
&&&&&&&-x'_m&&&&&&
\end{array}
\right],
\]
where $\displaystyle Q = -\sum_{i=1}^{n-1} x_i y_i - \sum_{j=1}^m x'_jy'_j$. 

The Frobenius action $\Frob$ on $\bbL_\cris(R)=\bL\otimes_W R$ is given by $\Frob = u\circ(\varphi \otimes \sigma)$. Thus, with respect to the $R$-basis $\{v_i\otimes 1, w_i\otimes 1, e'_j \otimes 1, f'_j\otimes 1\}_{i=1,\dots, n, j=1,\dots, m}$, we have that $\Frob=(uB)\circ \sigma$, where $B=\left[
\begin{array}{c|c}
B_0&0\\
\hline
0&I
\end{array}
\right]$, and $B_0$ is given in Lemma \ref{cyclicfrob}.
\end{para}

\subsection*{Equation of 
the non-ordinary locus}
We now compute the local equation of the non-ordinary locus in a formal neighborhood of a supersingular point. Recall that we have the Hodge cocharacter $\mu$, whose mod $p$ reduction induces the mod $p$ Hodge filtration on $\bL_{\cris,P}(k) = \bL\otimes_W k$. This induces the Hodge filtration on $\bL_{\cris}(R\otimes_W k) = \bL\otimes_W (R\otimes_W k)$, given by \[\Fil^i(\bL_{\cris}(R\otimes_W k)) = \Fil^i(\bL_{\cris}(k))\otimes_k (R\otimes_W k), i=-1,0,1.\] As in \cite[\S 3.4]{MST}, we note that $p\Frob$ induces a map $\gr_{-1}\bL_{\cris}(R\otimes_W k) \rightarrow \gr_{-1}\bL_{\cris}(R\otimes_W k)$. Ogus proved the following result.

\begin{lemma}[Ogus]
For a supersingular point $P$,
the non-ordinary locus (over $k$) in the formal neighborhood of $P$ is given by the equation \[p\Frob|_{\gr_{-1}\bL_{\cris}(R\otimes_W k)}=0.\]
\end{lemma}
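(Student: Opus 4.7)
The plan is to compute $p\Frob|_{\gr_{-1}\bbL_\cris(R \otimes_W k)}$ explicitly using the description $\Frob=(uB)\circ(\varphi \otimes \sigma)$ from \S\ref{Frob_vw}, and then identify its vanishing locus with the non-ordinary locus via a Newton polygon argument.

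From the mod-$p$ Hodge filtration described in \S\ref{Frob_vw}, the quotient $\gr_{-1}\bbL_\cris(R \otimes_W k)$ is free of rank one with generator $\bar v_n$, since every other basis element lies in $\Fil^0$. The $v_n$-column of $B$ in \Cref{cyclicfrob}(3) has a unique nonzero entry, namely $p^{-1}$ in the $w_1$-row, so $(\varphi\otimes\sigma)(v_n\otimes 1) = p^{-1}(w_1\otimes 1)$. Applying the unipotent $u$, whose $(n+1)$-th column has its only nontrivial off-diagonal entry $y_1$ in the $v_n$-row, gives $\Frob(v_n \otimes 1) = p^{-1}(w_1 + y_1 v_n)\otimes 1$, so $p\Frob(v_n \otimes 1) = w_1\otimes 1 + y_1(v_n\otimes 1)$. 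Since $w_1 \in \Fil^0$, reducing modulo $\Fil^0$ and then modulo $p$ yields $p\Frob|_{\gr_{-1}}(\bar v_n) = \bar y_1\,\bar v_n$, so the equation $p\Frob|_{\gr_{-1}} = 0$ cuts out the principal closed subscheme $\{y_1 = 0\} \subset \Spf R \otimes_W k$.

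To identify $\{y_1 = 0\}$ with the non-ordinary locus, I use the Newton polygon at a point $Q$ of $\Spf R\otimes_W k$. The Hodge polygon of $\bbL_\cris$ has slopes $-1, 0, \ldots, 0, 1$ with multiplicities $1, b, 1$; Mazur's theorem places the Newton polygon on or above it, with equality iff $Q$ is ordinary, and ordinariness for $\bbL_\cris$ agrees with ordinariness of the Kuga--Satake abelian variety since $\bbL_\cris$ is a $\GSpin$-invariant direct summand of the endomorphism $F$-crystal of the latter. Self-duality of $\bbL_\cris$ forces its Newton polygon to be symmetric about slope $0$, so any slope-$(-1)$ segment already forces the full ordinary polygon. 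A slope-$(-1)$ submodule of $\bbL_\cris(Q)$ projects nontrivially to $\gr_{-1}(Q)$, since the slopes on $\Fil^0(Q)$ are $\geq 0$; so its existence is equivalent to the induced $\sigma$-linear map $p\Frob|_{\gr_{-1}(Q)}$ being nonzero, i.e., to $y_1(Q)\neq 0$. Conversely, if $y_1(Q) = 0$ then $p\Frob$ maps $\bbL_\cris(Q)$ into $\Fil^0(Q) + p\bbL_\cris(Q)$, precluding any slope-$(-1)$ element, so $Q$ is non-ordinary.

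The main obstacle I expect is matching the scheme-theoretic equation $y_1 = 0$ with the Hasse-invariant divisor (not merely set-theoretically), so that later computations using $h_P = v_t(H)$ as in \S\ref{sec_heuristic} go through; this should follow because the Hasse invariant for orthogonal Shimura varieties is, up to a unit, the determinant of the $\sigma$-linear map $p\Frob|_{\gr_{-1}}$, a construction explicitly carried out for K3 surfaces by Ogus and extending directly to the general GSpin setting via the embedding of $\bbL_\cris$ into the endomorphism $F$-crystal of the Kuga--Satake abelian variety.
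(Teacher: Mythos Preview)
The paper does not prove this lemma at all; it simply cites Ogus (\cite{Ogus01}, Prop.~11 and pp.~333--334) and the authors' earlier paper \cite{MST}. Your proposal therefore goes well beyond what the paper supplies, and the comparison is really between your direct argument and Ogus's.

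Your second paragraph is not a proof of the lemma but of the \emph{Corollary} that follows it (\Cref{eq-nonord}): you compute the explicit equation cut out by $p\Frob|_{\gr_{-1}}$, whereas the lemma only asserts that this map vanishes exactly on the non-ordinary locus. Moreover, your computation silently assumes $n\geq 2$. When $n=1$ (the superspecial case) there is no coordinate $y_1$; the $w_1$-column of $u$ is then the $2n$-th column, whose $v_n$-entry is $Q$, so one obtains $p\Frob|_{\gr_{-1}}(\bar v_1)=\bar Q\,\bar v_1$. The paper's Corollary separates these two cases explicitly.

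Your third paragraph is the part that actually addresses the lemma, and the idea is correct, but the phrase ``the slopes on $\Fil^0(Q)$ are $\geq 0$'' does not make sense as stated: $\Fil^0(Q)$ is a subspace of the mod-$p$ reduction and carries no Newton slopes. The clean statement is that a primitive vector $v$ in the slope-$(-1)$ part of $\bL$ satisfies $\varphi(v)\notin\bL$, hence $\bar v\notin\Fil^0$, so its image in $\gr_{-1}$ is nonzero; since $p\varphi(v)$ is a unit times $v$, the map $p\Frob|_{\gr_{-1}}$ is nonzero. The converse you give is fine. Your final paragraph rightly flags that this Newton-polygon argument is a priori only pointwise, and that the scheme-theoretic identification with the Hasse divisor is exactly the content one must extract from Ogus.
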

See \cite{Ogus01}[Prop. 11 and p 333-334] (or \cite{MST}[Lemma 3.4.1] which elaborates on \cite{Ogus01}). 

\begin{corollary}\label{eq-nonord}
For a supersingular point $P$, the non-ordinary locus (over $k$) in the formal neighborhood of $P$ is given by the equation $Q=0$ if $P$ is superspecial; otherwise, the equation is given by $y_1=0$.
\end{corollary}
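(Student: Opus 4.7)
The plan is to apply the preceding lemma of Ogus, which reduces the problem to computing $p\Frob$ on $\gr_{-1}\bL_{\cris}(R\otimes_W k)$. From the description of the mod-$p$ Hodge filtration in \S\ref{Frob_vw}, the quotient $\gr_{-1} = \bL/\Fil^0$ is the free $(R\otimes_W k)$-module of rank one generated by the class of $\bar{v}_n$ (since $\Fil^0$ is spanned by $\bar{v}_1,\ldots,\bar{v}_{n-1}, \bar{w}_1,\ldots,\bar{w}_n, \bar{e}'_l,\bar{f}'_l$). So it suffices to compute the $\bar{v}_n$-component of $p\Frob(v_n\otimes 1) \bmod \Fil^0$.

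Using $\Frob = u\circ(\varphi\otimes\sigma)$, I first read $\varphi(v_n)$ off from the $n$-th column of the matrix $B_0$ in \Cref{cyclicfrob}: this column has $p^{-1}$ in the $w_1$-slot and zeros elsewhere, so $\varphi(v_n) = p^{-1}w_1$. Since $v_n \otimes 1$ is $\sigma$-invariant, this gives $\Frob(v_n\otimes 1) = p^{-1}\, u(w_1)\otimes 1$, hence $p\Frob(v_n\otimes 1) = u(w_1)\otimes 1$. Next I read the $(n+1)$-th column of $u$ (the $w_1$-column) from the displayed matrix in \S\ref{Frob_vw}. When $n\geq 2$, the only nonzero entry of that column in $u-I$ is $y_1$ in the $v_n$-row, so $u(w_1) = w_1 + y_1 v_n$. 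When $n=1$ (the superspecial case), the $v_n$-row is the same as row $n$, whose column-$2n$ entry is $Q$, and the remaining nonzero entries of the column $w_1 = w_n$ land in the $e'_j, f'_j$ rows; so $u(w_1) = w_1 + Q v_1 - \sum_j y'_j e'_j - \sum_j x'_j f'_j$.

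Reducing modulo $\Fil^0$ kills $\bar{w}_1$ and all $\bar{e}'_l, \bar{f}'_l$, leaving $y_1\,\bar{v}_n$ in the non-superspecial case and $Q\,\bar{v}_1$ in the superspecial case. The corollary then follows from Ogus's lemma. The computation is essentially bookkeeping with the explicit matrices already set up in this section; the only subtlety is the case split at $n=1$, which arises precisely because when $n=1$ the coordinates $y_i$ no longer exist in $R$, and the role of $y_1$ is played by the quadratic expression $Q$ coming from the $v_n$-row/$w_n$-column entry of $u$.
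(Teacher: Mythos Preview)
Your proof is correct and follows essentially the same approach as the paper: both compute $p\Frob(\bar v_n)$ on $\gr_{-1}$ directly from the explicit matrix form $\Frob=(uB)\circ\sigma$, obtaining $y_1\bar v_n$ for $n\geq 2$ and $Q\bar v_1$ for $n=1$. Your write-up simply unpacks the intermediate step $\varphi(v_n)=p^{-1}w_1$ and then reads off the $(n{+}1)$-th column of $u$, which is exactly what the paper's one-line computation is doing.
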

\begin{proof}
In what follows, the number $n$ is as in Lemma \ref{cyclicfrob}, i.e., $2n = t_P = \dim_W \bL_0$ and we follow the notation in \S\ref{Frob_vw}. The space $\gr_{-1}\bL_{\cris}(R\otimes_W k)$ is spanned by $\bar{v}_n$. We use description of $\Frob = (uB)\circ\sigma$ (from Lemma \ref{cyclicfrob} and the explicit description of $u$ in \S\ref{Frob_vw}) to see that the map $p\Frob: \gr_{-1}\bL_{\cris}(R\otimes_W k)\rightarrow \gr_{-1}\bL_{\cris}(R\otimes_W k)$ has the explicit description
 \[p\Frob(\bar{v}_n) = Q\bar{v}_n  \text{ if }n=1;\,
    p\Frob(\bar{v}_n) = y_1 \bar{v}_n \text{ if }n>1.\]
The result now follows from the fact that $P$ is superspecial if and only if $t_P = 2$ if and only if $n = 1$. 
\end{proof}

\begin{para}\label{Frob_ef}
In order to compute the powers of $\Frob$ in the proofs of the decay lemmas later, we describe $\Frob$ with respect to the $K$-basis $\{e_i,f_i,e'_j,f'_j\}_{i=1,\dots,n,j=1,\dots,m}$ of $\bL\otimes_W K$. Let $S=\left[
\begin{array}{c|c}
S_0&0\\
\hline
0&I
\end{array}
\right], S'=\left[
\begin{array}{c|c}
S'_0&0\\
\hline
0&I
\end{array}
\right]$, where $S_0,S_0'$ are defined in \S\ref{change-basis} and thus $S'\in \GL_{2n+2m}(W)$. Then by definition, $B=S^{-1}\sigma(S)$.

We view $\{e_i,f_i,e'_j,f'_j\}_{i=1,\dots,n,j=1,\dots,m}$ as an $R[1/p]$-basis of $\bbL_\cris(R)\otimes_R R[1/p]$ and then $\Frob$ is given by $S(uB)\sigma(S^{-1})\circ \sigma =SuS^{-1}\circ \sigma=S'u'(S')^{-1} \circ \sigma$, where 
\[u'=I+\left[
\begin{array}{cccc|cccc|cccccc}\label{unip}
&&&&&&&-y_1/p&&&&&&\\
&&&&&&&\vdots&&&&&&\\
&&&&&&&-y_{n-1}/p&&&&&&\\
x_1&\hdots&x_{n-1}&0&y_1/p&\hdots&y_{n-1}/p&Q/p&x'_1/p&\hdots&x'_{m}/p&y'_1/p&\hdots&y'_m/p\\
\hline
&&&&&&&-x_1&&&&&&\\
&&&&&&&\vdots&&&&&&\\
&&&&&&&-x_{n-1}&&&&&&\\
&&&&&&&0&&&&&&\\
\hline
&&&&&&&-y'_1&&&&&&\\
&&&&&&&\vdots&&&&&&\\
&&&&&&&-y'_m&&&&&&\\
&&&&&&&-x'_1&&&&&&\\
&&&&&&&\vdots&&&&&&\\
&&&&&&&-x'_m&&&&&&
\end{array}
\right].
\]
\end{para}

\section{Decay for non-superspecial supersingular points}\label{sec_decay_sg}
The goal of this and the next section is to prove that, at supersingular points, special endomorphisms ``decay rapidly'' in the sense of \cite[Def.~5.1.1]{MST}, which we will recall below.  

Throughout these sections, $k=\bar{\bF}_p$, $W=W(k)$, $K=W[1/p]$. We focus on the behavior of the curve $C$ in \Cref{thm_int} in a formal neighborhood of a supersingular point $P$, so we may let $C = \Spf k[[t]]$ denote a generically ordinary formal curve in $\Sh_k$ which specializes to $P$. In this section, we will focus on the case when $P$ is non-superspecial and we treat the superspecial case in \S\ref{sec_decay_ssp}.

Let $\cA/k[[t]]$ denote the pullback of the universal abelian scheme $\cA^\univ$ over the integral model $\Sh$ of the GSpin Shimura variety via $\Spf k[[t]] \rightarrow \Sh_k$ and let $A$ denote $\cA \bmod t$, and we will consider the $p$-divisible groups $\cA[p^{\infty}], A[p^\infty]$ associated to $\cA,A$. Let $h$ denote the $t$-adic valuation of the local equation defining the non-ordinary locus given in \Cref{eq-nonord}. Recall from \S\ref{par_decomp_cL}, $\cL$ is the lattice of special endomorphisms of $A[p^\infty]$.

\begin{defn}[{\cite[Def.~5.1.1]{MST}}]\label{decaydef}
We say that $w\in \cL$ \emph{decays rapidly} if for every $r\in \bZ_{\geq 0}$, the special endomorphism $p^r w$ does not lift to an endomorphism of $\cA[p^\infty]$ modulo $t^{h_r+1}$, where $h_r:=[h(p^r+\cdots+1+1/p)]$. We say that a $\bZ_p$-submodule of $\cL$ decays rapidly if every primitive vector in this submodule decays rapidly.
\end{defn}

The main theorem of this section is the following:
\begin{theorem}[The Decay Lemma]\label{genericdecay}
There exists a rank $2$ saturated $\bZ_p$-submodule of $\cL$ which decays rapidly.
\end{theorem}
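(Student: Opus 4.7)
The plan is to exhibit an explicit rank $2$ saturated submodule of $\cL$ and verify its decay by a direct computation with the $F$-crystal $\bbL_\cris$ described in Section \ref{sec_three}.

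First, I would reduce to the split case for $\cL_1$ via the embedding trick of \S\ref{embedding}, so that the basis $\{e_i, f_i, e'_j, f'_j\}$ is available, and then pull back the description of Frobenius $\Frob = S' u' (S')^{-1} \circ \sigma$ from \S\ref{Frob_ef} to $k[[t]]$ along the map $R \twoheadrightarrow k[[t]]$ defined by $C$. By \Cref{eq-nonord}, the image of the coordinate $y_1$ has $t$-adic valuation exactly $h$, while the remaining coordinates $y_i$ ($i\geq 2$), $x_i$, $x'_j$, $y'_j$ have merely positive valuation and contribute only to higher-order terms. The criterion that $p^r w$ lifts to an endomorphism of $\cA[p^\infty]$ modulo $t^n$ is equivalent, via the crystalline description of special endomorphisms, to the statement that $p^r w\in\bL^{\varphi=1}$ extends to a Frobenius-fixed section of $\bbL_\cris$ over the divided-power thickening of $k[[t]]/t^n$. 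Iterating and clearing denominators, this becomes the condition that $\Frob^{r+1}(p^r w) \equiv p^r w \pmod{t^n}$ in $\bL \otimes_W k[[t]]/t^n$, so the lift fails precisely when the leading-$t$ term of this congruence is nonzero.

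I would then take $M := \Span_{\bZ_p}\{e_2, f_2\} \subset \cL_0$, using the non-superspecial hypothesis $n = t_P/2 \geq 2$; this submodule is saturated in $\cL$ by \Cref{L''_p0}. For a primitive $w = a e_2 + b f_2 \in M$ and $r \geq 0$, I would iterate the Frobenius $r+1$ times on $p^r w$ and track the leading-order $t$-valuation of the resulting obstruction. The key arithmetic ingredients are: the successive $\sigma$-pullbacks $y_1 \mapsto y_1^p \mapsto y_1^{p^2} \mapsto \cdots$ raising the $t$-valuation multiplicatively by $p$ at each step; the $p^{-1}$ entry of the cyclic block $B_0$ from \Cref{cyclicfrob}, which introduces extra negative $p$-adic valuation at each iteration; and the balancing factor $p^r$ in $p^r w$, which exactly cancels $r$ of these negative $p$-valuations but leaves one residual contribution, accounting for the final $h/p$ term. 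A careful accounting shows that the leading nonzero term of the obstruction appears at $t$-valuation exactly $h_r = [h(p^r + \cdots + 1 + 1/p)]$, proving that $p^r w$ does not lift modulo $t^{h_r+1}$ for any primitive $w\in M$.

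The main obstacle is the simultaneous bookkeeping of $p$-adic and $t$-adic valuations across $r$ iterations of a $\sigma$-linear operator whose matrix mixes integral and $p$-polar entries, together with verifying that the auxiliary coordinates $y_i$ ($i \geq 2$), $x'_j$, $y'_j$ cannot conspire to cancel the leading $y_1$-contribution in any of the intermediate iterates. The choice $w \in \Span_{\bZ_p}\{e_2, f_2\}$ is precisely what localizes the obstruction to the $y_1$-column of $u'$ and rules out such cancellation; together with the saturation of $M$ in $\cL$, this yields the theorem.
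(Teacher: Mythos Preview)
There is a genuine gap in your approach: the explicit choice $M = \Span_{\bZ_p}\{e_2, f_2\}$ is not justified, and there is no reason this particular pair of basis vectors should decay rapidly. Whether the leading obstruction for a given $w$ vanishes depends on the change-of-basis matrix $S'_0$ (which encodes the specific supersingular point $P$) and on the curve $C$ (via which products of Frobenius-twists achieve the minimal $t$-valuation). Concretely, in the paper's framework the relevant linear functional on $\bF_p^{2n}$ is a $k$-linear combination of $\sigma$-twists of the row $\overline{R}_{n+1}$ of $(S'_0)^{-1}\bmod p$, and its kernel can perfectly well contain $\bar{e}_2$, $\bar{f}_2$, or both. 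Your assertion that choosing $w\in\Span\{e_2,f_2\}$ ``localizes the obstruction to the $y_1$-column of $u'$'' is exactly the step that fails: the $y_1$-contribution enters through $K_1 = S'_0 A_1 (S'_0)^{-1}$, whose action on $w$ depends on $S'_0$, not on which $e_i, f_i$ span $w$.

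The paper's argument avoids this by never naming an explicit submodule. It first identifies, among all terms in the product expansion of $F_\infty(1)$ with $p$-adic valuation $-(r+1)$, those of minimal $t$-adic valuation (the set $\Imin_{r+1}$), and uses a combinatorial analysis (\Cref{minval}) to show that the weights occurring in $\Imin_{r+1}$ span an interval of length at most $n+1$. The resulting mod-$p$ obstruction is then a $k$-linear combination of at most $n+1$ consecutive $\sigma$-twists of $\overline{R}_{n+1}$, and a Moore-matrix argument (\Cref{lastgenericdecay}) bounds the $\bF_p$-kernel by $n$. Since these kernels are nested in $r$, their union $S_\infty$ still has $\bF_p$-dimension $\leq n$, and any saturated $\Lambda \subset \cL_0$ of rank $n$ whose reduction avoids $S_\infty$ decays rapidly. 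This yields rank $n\geq 2$, but $\Lambda$ is produced existentially, not explicitly. Your secondary criterion ``$\Frob^{r+1}(p^r w)\equiv p^r w$'' is also not quite the right formulation: the paper works with the full horizontal continuation $\tilde{w}=F_\infty w$ and checks its integrality in the PD-envelope $\bbL_\cris(D_s)$, which is what produces the extra factor of $1/p$ in $h_r$.
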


Theorem \ref{cor_decay_sg} follows directly from the Decay Lemma:
\begin{proof}[Proof of Theorem \ref{cor_decay_sg}]
We first note that $\cL_n$, the lattice of special endomorphisms of $\cA[p^{\infty}] \bmod t^n $, is precisely $L_n \otimes \bZ_p$. Upon choosing a basis of $\cL$ that extends a basis of the submodule that decays rapidly (which we may do, as this submodule is saturated in $\cL$), we see that the index $|\cL/\cL_n|$ of $\cL_n$ in $\cL$ is at least $p^{2r+2}$ if $n\geq h_r+1$. The corresponding statements for $L_n$ now follow directly.  
\end{proof}

\begin{para}
We first give an indication as to why such the reader should expect a statement along these lines to hold. Note that in the mixed characteristic setting, namely while deforming from $k$ to $W(k)$, applying Grothendieck--Messing theory yields that if a special endomorphism $\alpha$ lifts mod $p^n$ but not mod $p^{n+1}$, then the special endomorphism $p\alpha$ would lift mod $p^{n+1}$ but \emph{not} $p^{n+2}$ (see \cite[Lemma 4.1.2]{ST20}). However, Grothendieck--Messing theory is inherently limited in the equicharacteristic $p$ setting, and the bounds it yields are worse than the bounds it yields in the mixed characteristic setting. 

We illustrate this with the following example. Let $\pdiv $ denote a $p$-divisible group over $\Spec k[t]/(t^a)$, suppose that $\alpha$ is any endomorphism of $\pdiv$, and suppose that $\pdiv'$ over $\Spec k[t]/(t^{pa})$ is a $p$-divisible group that deforms $\pdiv$. We claim that the endomorphism $p\alpha$ deforms to $\pdiv'$ regardless of how $\alpha$ behaves. Indeed, let $\bD$ denote the Dieudonn\'e crystal of $\pdiv/ \Spec k[t]/(t^a)$. As the map $\Spec k[t]/(t^{a}) \rightarrow \Spec k[t]/(t^{pa})$ is naturally equipped with a divided powers structure (and this is the key point in this observation), we may evaluate the Dieudonn\'e crystal $\bD$ at $\Spec k[t]/(t^{pa})$, and Grothendieck--Messing theory implies that the choice of deformation $\pdiv'$ is equivalent to the choice of a filtration of $\bD(\Spec k[t]/(t^{pa}))$ which is compatible with the filtration on $\bD(\Spec k[t]/(t^a))$ given by $\pdiv$. This corresponds to $\Fil \subset \bD(\Spec k[t]/(t^{pa}))$, which is a free $k[t]/(t^{pa})$ sub-module of $\bD(\Spec k[t]/(t^{pa}))$, which itself is a free $k[t]/(t^{pa})$-module. Moreover, any endomorphism $\beta$ of $\pdiv$ induces an endomorphism of the crystal $\bD$, and therefore induces an endomorphism of $\bD(\Spec k[t]/(t^{pa}))$. Finally, $\beta$ deforms to an endomorphism of $\pdiv'$ if and only if $\beta(\Fil) \subset \Fil$. Given that $\alpha$ induces an endomorphism of $\bD(\Spec k[t]/(t^{pa}))$ (which need not preserve $\Fil$), it follows that $p\alpha$ induces the zero map on $\bD(\Spec k[t]/(t^{pa}))$, and thus tautologically preserves $\Fil$ whether or not $\alpha$ does. Therefore, it follows that if $\alpha$ is an endomorphism of $\pdiv$ over $k[t]/(t^a)$, then $p\alpha$ lifts to any deformation of $\pdiv$ to $\Spec k[t]/(t^{pa})$, which suggests that it is not possible to expect a much faster rate of decay than defined in Definition \ref{decaydef}. 

We now work in the setting of a $p$-divisible group $\cA[p^{\infty}]/ k[[t]]$. Let $\alpha$ denote an endomorphism of $\cA[p^{\infty}] \bmod t$, that extends to an endomorphism of $\cA[p^{\infty}] \bmod t^{a}$, but not $t^{a+1}$. The example considered above implies that $p\alpha$ extends to an endomorphism mod $t^{pa}$. However, Grothendieck--Messing theory cannot be naively applied to find an effective integer $b$ (in terms of $a$) which has the property that $p\alpha$ does not extend to an endomorphism of $\cA[p^\infty] \bmod t^b$. 
Therefore, in order to prove Theorem \ref{genericdecay}, we use Kisin's description of the $F$-crystal $\bbL_\cris$, which controls the $t$-adic deformation of the special endomorphisms of $\cA[p^\infty] \bmod t$ -- see the next paragraph for a sketch of how we proceed. 
\end{para}

\begin{para}
We give a rough idea of the proof of \Cref{genericdecay} here (see \S\S\ref{Yn}-\ref{dJ+Kisin} for details and references); we also provide a toy example of the explicit computation in this section. The reader should feel free to read this description and skip the details of our proof on a first reading. For a given special endomorphism $w\in \cL$, if it extends to an endomorphism of $\cA[p^\infty] \bmod t^r$, then this extension must be the restriction to $k[t]/t^r$ of the horizontal section $\widetilde{w}$ with respect to the natural connection on the $F$-crystal $\bbL_\cris$. On the other hand, given such a $w$, this horizontal section $\widetilde{w}$ is given by $\lim_{n\rightarrow \infty} \Frob^n(w)$, where as in \S\ref{Frob_vw}, $\Frob$ is the $\sigma$-linear Frobenius action on $\bbL_\cris$. Note that the expression $\lim_{n\rightarrow \infty} \Frob^n(w)$ is in general not an element in $\bbL_\cris(R)=\bL\otimes_W R$; indeed, as a $\bL$-valued power series in local coordinates of the formal neighborhood, its coefficient lies in $\bL\otimes_W K$ and the $p$-adic valuation of these coefficients may go to $+\infty$. Thus the proof of \Cref{genericdecay} boils down to study the $p$-adic valuation of the coefficients of $\widetilde{w}$, which requires an explicit computation of $\Frob^n$. Here is a toy model of such computation (we refer the reader to \S\ref{Frob_ef_ssp} for why the following is a toy model). Consider $\Frob=(I+F)\circ \sigma$ with respect to a $\varphi$-invariant basis of $\bL$, where $F=\begin{bmatrix} xy/p & x/p \\ y & 0\end{bmatrix}$, $R=W[[x,y]]$, $\sigma(x)=x^p, \sigma(y)=y^p$ and when we restrict ourselves to $C$, we plug in $x,y$ by certain power series $x(t), y(t)\in W[[t]]$, which are chosen based on $C\rightarrow \Spf k[[x,y]]$. Thus $\prod_{n=1}^\infty (I+F)\circ \sigma$ is an infinite summation of products of $F^{(i)}:=\sigma^{i}(F)$. Consider $w=[1,0]^T$ (with respect to the chosen basis of $\bL$), then a direct computation of matrix products implies that for the first coordinate of $\widetilde{w}$, among all the terms with $p$-adic valuation $-(r+1)$, the unique term with minimal $t$-adic valuation is $\prod_{i=1}^{r+1} \sigma^{i-1}(xy/p)=(xy)^{1+p+\cdots+p^{r}}/p^{r+1}$. This observation allows us to prove the Decay Lemma.
\end{para}

\subsection*{The setup}
The setup and the first reduction steps in the proof of \Cref{genericdecay} is the same as that in the proof of \cite[Thm.~5.1.2]{MST} in \cite[\S 5.1]{MST}. We briefly introduce the notation for the proof of \Cref{genericdecay} here and the reader may see \cite{MST} for more details.

\begin{para}\label{Yn}
Recall from \S\ref{Frob_vw} that $\widehat{\cO}_{\Sh,P}=\Spf W[[x_1,\dots, x_{n-1}, y_1,\dots, y_{n-1}, x'_1,\dots, x'_m,y'_1,\dots,y'_m]]$ when $\cL_1$ is split. Since $P$ is non-superspecial, we have $n\geq 2$ through out this section. 

The formal curve $C$ gives rise to the tautological ring homomorphism 
\[
W[[x_1,\dots, x_{n-1}, y_1,\dots, y_{n-1}, x'_1,\dots, x'_m,y'_1,\dots,y'_m]] \rightarrow k[[t]],
\]
and we denote by $x_i(t)$ (respectively $x'_i(t), y_i(t), y'_i(t)$) the images of $x_i$ (respectively $x'_i,y_i,y'_i$) in $k[[t]]$. For each $x_i(t)$ (respectively $x'_i(t), y_i(t), y'_i(t)$), let $X_i(t)$ (respectively $X'_i(t), Y_i(t), Y'_i(t)$) denote power series in $W[[t]]$ whose coefficients are the Teichmuller lifts of the coefficients of $x_i(t)$ (respectively $x'_i(t),y_i(t),y'_i(t)$). We define $\displaystyle Y_n(t) = -\sum_{i=1}^{n-1} X_i(t)Y_i(t) - \sum_{j=1}^m X'_j(t)Y'_j(t)$, and define $\displaystyle Y_{n+1} (t) =- \sum_{i=1}^m(X'_i(t)(Y'_i(t))^p + (X'_i(t))^pY'_i(t))$. 
Let $a_i = v_t(Y_i(t))$ for $1\leq i\leq n+1$, where $v_t$ denotes the function of taking $t$-adic valuation. By \Cref{eq-nonord}, since $P$ is a non-superspecial supersingular point, the local equation of the non-ordinary locus is given by $y_1(t) = 0$  and hence $h =v_t(y_1(t))=v_t(Y_1(t))= a_1$.
\end{para}

\begin{para}\label{dJ+Kisin}
We now relate the lift of special endomorphisms to explicit computations of powers of the Frobenius matrix given in \S\ref{Frob_ef}. For details, see \cite[Proof of Thm.~5.1.2 assuming Prop.~5.1.3]{MST}. For $s\in \bZ_{\geq 0}$, let $D_s$ denote the $p$-adic completion of the PD enveloping algebra of the ideal $(t^s, p)$ in $W[[t]]$. By de Jong's theory \cite[\S 2.3]{dJ95}, if $w\in \cL$ lifts to a special endomorphism of $\cA \bmod t^s$, then it gives rise to a horizontal section in the Dieudonn\'e module $\bbL_\cris(D_s)$. Thus, in order to find the largest $s$ such that $w$ lifts to $k[t]/(t^s)$, we first compute the horizontal section $\tilde{w}$ passing through $w$ and then study the $p$-adic integrality of $\tilde{w}$.

Here we recall the construction of $\tilde{w}$ following \cite[\S 1.5.5]{Kisin} and the rest of this section is devoted to the study of the $p$-adic integrality. Let $F$ denote the matrix $S'(u'-I)(S')^{-1}$ in \S\ref{Frob_ef} with $X_i(t)$ (resp.~$X'_j(t), Y_i(t), Y'_j(t)$) substituted in place of $x_i$ (resp.~$x'_j,y_i,y'_j$); with respect to the basis $\{e_i,f_i,e'_j,f'_j\}$, $\Frob=(I+F)\circ \sigma$, where $\sigma$ is defined in \S\ref{Frob_vw}. Let $F^{(i)}$ denote the $i$-th $\sigma$-twist of $F$; more precisely, $F^{(i)}$ is given by $\sigma^{i}(S'(u'-I)(S')^{-1})$ with $X_i(t)$ (resp.~$X'_j(t), Y_i(t), Y'_j(t)$) substituted in place of $x_i$ (resp.~$x'_j,y_i,y'_j$) (here we first do the $\sigma$-twist and then substitute). Let $\displaystyle \F = \prod_{i=0}^{\infty}(I + F^{(i)})\in M_{2n+2m}(K[[t]])$. This product is well-defined and the $\bQ_p$-span of the columns of $F_\infty$ are vectors in $\bbL_\cris(R)\otimes_R K[[x_i,y_i,x'_j,y'_j]]$ which are $\Frob$-invariant and horizontal. Thus the horizontal section $\tilde{w}$ over $W[[t]]$ is given by $F_\infty w$, where we write $w$ as a column vector with respect to the basis $\{e_i,f_i, e'_j, f'_j\}$. The horizontal section over $D_s$ is given by natural pullback of the one over $W[[t]]$. In order to show that $w$ decays rapidly, it suffices to show that for every $r$, we have that $p^r\tilde{w}$ does not lie in $\bbL_\cris (D_{h_r+1})$. Thus in what follows, we will find the term with minimal $t$-adic valuation in the expansion of $\tilde{w}=F_\infty w$ among all terms with $p$-adic valuation $-r$. 
\end{para}

\subsection*{The terms in $\F$ with minimal $t$-adic valuation among ones with fixed $p$-adic valuation}
In order to prove \Cref{genericdecay}, it suffices to work with $F_\infty(1)$, the top-left $2n\times 2n$ block of $F_\infty$ (see the first paragraph of the proof of Theorem \ref{genericdecay} right after \Cref{lastgenericdecay} for details) and in what follows, we compute $F_\infty(1)$ explicitly. 

\begin{para}\label{Ki}
Let $A$ (resp. $C$, $D$) denote the top-left $2n\times 2n$ (resp. top-right $2n\times m$ and bottom-left $m\times 2n$) block of $u'-I$ in \S\ref{Frob_ef} with $X_i(t)$ (resp.~$X'_j(t), Y_i(t), Y'_j(t)$) substituted in place of $x_i$ (resp.~$x'_j,y_i,y'_j$). For $i<n$, we let $A_i$ denote the $2n\times 2n$ matrix with the $n,n+i$ entry equal to $p^{-1}Y_i(t)$, the $i,2n$ entry equalling $-p^{-1}Y_i(t)$ and all other entries equal to 0. We let $A_n$ denote the matrix with zeros everywhere except for the $n,2n$ entry, which equals $p^{-1}Y_n(t)$. Let $K_i$ equal $S'_0A_i(S'_0)^{-1}$. Let $A_{n+1} = CD^{(1)}$ and $K_{n+1} = S'_0 A_{n+1}((S'_0)^{-1})^{(1)}$. Note that $A_{n+1}$ is a $2n\times 2n$ matrix with zeros everywhere except for the $n,2n$-entry which equals $p^{-1}Y_{n+1}(t)$.

For brevity, let $F(1)$ denote the top-left $2n\times 2n$ block of $F$. We have $F(1) = \displaystyle \sum_{i=1}^{n}  K_i + B(x)$, where $B(x)$ involves only the $X_i$ and has $p$-adically integral coefficients since $S'_0\in \GL_{2n}(W)$. Moreover, we observe that $\F(1)$ is made up of sums of finite products of $\sigma$-twists of $B(x)$ and $K_i,i=1,\dots, n+1$.
\end{para}

The following two lemmas identify the nonzero products of $\sigma$-twists of $K_i,i=1,\dots, n+1$.
For brevity, let $R_1\hdots R_{2n}$ denote the rows of the matrix $(S'_0)^{-1}$. Since $(S'_0)^{-1}\in \GL_{2n}(W)$, we have that $\{\overline{R}_i\}_{i=1}^{2n}$ is a basis of $k^{2n}$, where we use $\overline{R}_i$ to denote $R_i \bmod p$.

\begin{lemma}\label{frobactionrows}
We have $\sigma(R_i) = R_{i+1}$ for $n+1\leq i\leq 2n-1$; $\sigma(R_{2n}) = R_1$, $\sigma(R_i) = R_{i+1} - b_iR_1$ for $1\leq i \leq n-1$, and $\sigma(R_n) = R_{n+1} + \sum_{i=1}^{n-1}b_i R_{n+i+1}$.
\end{lemma}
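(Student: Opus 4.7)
The plan is to deduce the lemma from the Frobenius identity $\sigma(S'_0) = S'_0 B'_0$, where $B'_0$ is the matrix of $\varphi$ on $\bL_0$ with respect to the basis $\{pv_1,\dots,pv_n,w_1,\dots,w_n\}$. Since $\{e_i,f_i\}_{i=1}^n$ is a $\varphi$-invariant $\bZ_p$-basis of $\cL_0$ and the change-of-basis matrix to $\{pv_i,w_i\}$ is $S'_0\in\GL_{2n}(W)$, the same derivation that produced $B = S^{-1}\sigma(S)$ in \S\ref{Frob_vw} yields $B'_0 = (S'_0)^{-1}\sigma(S'_0)$, and hence $\sigma((S'_0)^{-1}) = (B'_0)^{-1}(S'_0)^{-1}$. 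Reading off rows, this single matrix identity gives
\[
\sigma(R_i) \;=\; \sum_{j=1}^{2n} \bigl((B'_0)^{-1}\bigr)_{ij}\, R_j,
\]
so the lemma reduces to writing down the entries of $(B'_0)^{-1}$ explicitly.

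First I would compute $B'_0 = \diag(pI,I)^{-1} B_0\, \diag(pI,I)$ starting from \Cref{cyclicfrob}. The conjugation clears the $p$ in the top-right block and the $p^{-1}$ in the bottom-left block of $B_0$, so $B'_0\in\GL_{2n}(W)$. Writing $\epsilon_1,\dots,\epsilon_{2n}$ for the standard basis of $W^{2n}$, the columns of $B'_0$ then give the rules
\[
B'_0\epsilon_j = \epsilon_{j+1}\text{ for }1\leq j\leq n,\qquad B'_0\epsilon_{n+j} = -b_j\epsilon_{n+1}+\epsilon_{n+j+1}\text{ for }1\leq j\leq n-1,
\]
together with $B'_0\epsilon_{2n} = \epsilon_1 + \sum_{j=1}^{n-1} b_j\,\epsilon_{j+1}$.

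Next I would invert these relations. The first family immediately yields $(B'_0)^{-1}\epsilon_{j+1} = \epsilon_j$ for $1\leq j\leq n$. The second gives $(B'_0)^{-1}\epsilon_{n+j+1} = \epsilon_{n+j} + b_j\,\epsilon_n$ for $1\leq j\leq n-1$. Applying $(B'_0)^{-1}$ to the formula for $B'_0\epsilon_{2n}$ and substituting the previous two yields
\[
(B'_0)^{-1}\epsilon_1 \;=\; \epsilon_{2n} - \sum_{j=1}^{n-1} b_j\,\epsilon_j.
\]

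Finally I would extract $((B'_0)^{-1})_{ij}$ as the coefficient of $\epsilon_i$ in $(B'_0)^{-1}\epsilon_j$. For $1\leq i\leq n-1$ the only nonzero entries in row $i$ arise from $j=1$ (contributing $-b_i$) and $j=i+1$ (contributing $1$), giving $\sigma(R_i) = R_{i+1} - b_i R_1$. For $i = n$, the contributions come from $j = n+1$ (giving $1$) and $j = n+k+1$ for $1\leq k\leq n-1$ (each giving $b_k$), yielding $\sigma(R_n) = R_{n+1} + \sum_{k=1}^{n-1} b_k R_{n+k+1}$. For $n+1\leq i\leq 2n-1$ only the column $j = i+1$ contributes (via the second family), so $\sigma(R_i) = R_{i+1}$. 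For $i = 2n$ only the column $j = 1$ contributes, giving $\sigma(R_{2n}) = R_1$. The entire argument is bookkeeping with no substantive obstacle anticipated.
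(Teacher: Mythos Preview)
Your proposal is correct and follows essentially the same approach as the paper: derive $\sigma((S'_0)^{-1})=(B'_0)^{-1}(S'_0)^{-1}$ with $B'_0=\diag(p^{-1}I,I)\,B_0\,\diag(pI,I)$, and then read off the row relations from the entries of $(B'_0)^{-1}$. The paper simply records this identity and says ``direct computation using \Cref{cyclicfrob}(3),'' whereas you have spelled out that computation in full; the content is the same.
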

\begin{proof}
Recall from \S\ref{Frob_ef} that $B_0=S_0^{-1}\sigma(S_0)$ and thus by \S\ref{change-basis}, we have $\sigma((S'_0)^{-1})=(B_0')^{-1} (S'_0)^{-1}$, where $B'_0=\left[
\begin{array}{c|c}
p^{-1}I&0\\
\hline
0&I
\end{array}
\right] B_0 \left[
\begin{array}{c|c}
pI&0\\
\hline
0&I
\end{array}
\right]$.
We then obtain the assertions by a direct computation using \Cref{cyclicfrob}(3). 
\end{proof}

\begin{lemma}\label{lintopleft}
Notation as in \S\S\ref{Yn},\ref{Ki}.
\begin{enumerate}
    \item 
    For $i,j,l\in \bZ_{>0}$ such that $i,j,l\leq n + 1$ and $l\leq i$, 
    the matrix $K_iK_j^{(l)}=0$ unless $i=l$. Moreover, the image of $K_iK_j^{(i)}$ is $\Span_K\{v_n\}$ and $K_iK_j^{(i)}=S'_0M$ where $M\in M_{2n}(K)$ is the matrix with its $n\thh$ row being $ Y_iY_j^{(i)}p^{-2}\sigma^{i+j-1}(R_{n+1})$ and all other rows being $0$.
    
    \item For $i_1, i_2,\dots, i_l\in \bZ_{\geq 1}$ such that $i_1,i_2,\dots,i_l\leq n+1$, the image of the matrix $\prod_{j=1}^l K_{i_j}^{(i_1+\cdots+i_{j-1})}$ is $\Span_K\{v_n\}$. Moreover, $\prod_{j=1}^l K_{i_j}^{(i_1+\cdots+i_{j-1})}=S'_0M$ where $M\in M_{2n}(K)$ is the matrix with its $n\thh$ row being $(\prod_{j=1}^l Y_{i_1}^{(i_1+\cdots i_{j-1})})p^{-l}\sigma^{i_1 + i_2 + \cdots +i_{l}-1}(R_{n+1})$ and all other rows being $0$.
\end{enumerate}
\end{lemma}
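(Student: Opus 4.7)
The idea is to exploit the rank-one-like structure of each $K_i$. Denote by $c_k$ (resp.\ $R_k$) the $k\thh$ column of $S'_0$ (resp.\ $k\thh$ row of $(S'_0)^{-1}$), so that $R_k c_m = \delta_{km}$ and $S'_0 e_k = c_k$. Plugging in the definitions from \S\ref{Ki} gives
\[
K_i = p^{-1}Y_i(t)\bigl(c_n R_{n+i} - c_i R_{2n}\bigr)\ \text{for }i<n,\quad K_n = p^{-1}Y_n c_n R_{2n},\quad K_{n+1} = p^{-1}Y_{n+1}\, c_n\, R_1,
\]
where the last identity uses that $A_{n+1}$ has its unique non-zero entry in position $(n,2n)$ together with $e_{2n}^T\sigma((S'_0)^{-1}) = \sigma(R_{2n}) = R_1$ by Lemma \ref{frobactionrows}. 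Iterating the relations in Lemma \ref{frobactionrows} further yields $R_{n+i} = \sigma^{i-1}(R_{n+1})$ for $1\le i\le n$ and $R_1 = \sigma^n(R_{n+1})$, so the three formulas merge into the uniform expression
\[
K_i = p^{-1}Y_i\, c_n\,\sigma^{i-1}(R_{n+1})\; -\; p^{-1}Y_i\, c_i\, R_{2n}\cdot \mathbf{1}_{i<n},
\]
and applying $\sigma^l$ entrywise gives the corresponding formula for $K_j^{(l)}$.

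Next I dualize Lemma \ref{frobactionrows} in order to evaluate scalar products $R_k\sigma^s(c_m)$. Using $R_k\sigma(c_m) = \sigma\bigl(\sigma^{-1}(R_k)\cdot c_m\bigr)$ together with the inverted Frobenius formulas from Lemma \ref{frobactionrows}, one computes $\sigma(c_m) = c_{m+1}$ for $1\le m\le n$, $\sigma(c_{n+j}) = -b_j c_{n+1} + c_{n+j+1}$ for $1\le j\le n-1$, and $\sigma(c_{2n}) = c_1 + \sum_{k=1}^{n-1}b_k c_{k+1}$. Iterating, for every $0\le l\le n$ the vector $\sigma^l(c_n)$ is a $W$-linear combination of $\{c_{n+1},\dots,c_{n+l}\}$ whose coefficient of $c_{n+l}$ equals $1$, while $\sigma^{n+1}(c_n)$ has coefficient of $c_1$ equal to $1$; moreover for $j<n$ one has $\sigma^l(c_j) = c_{l+j}$ when $l+j\le n$ and $\sigma^l(c_j) = \sigma^{l+j-n}(c_n)$ otherwise. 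Reading off coefficients gives the key identities
\[
R_{n+i}\sigma^l(c_n) = \delta_{l,i}\ (l\le i\le n-1),\quad R_{2n}\sigma^l(c_n) = \delta_{l,n},\quad R_1\sigma^l(c_n) = \delta_{l,n+1},
\]
and $R_{n+i}\sigma^l(c_j) = R_{2n}\sigma^l(c_j) = R_1\sigma^l(c_j) = 0$ for all $j<n$ and $l\le i\le n+1$; each of these is a short check using the explicit support of $\sigma^l(c_j)$.

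For part (1), I expand $K_i K_j^{(l)}$ as a sum of at most four rank-one matrices of the form (column)$\cdot$($R_a\sigma^l(c_b)$)$\cdot$(row). The constraint $l\le i$ combined with the scalar identities above forces every summand to vanish unless $l = i$; when $l = i$, only one scalar (the one pairing the $c_n$-column with $\sigma^i(c_n)$) is non-zero and equals $1$, and the $c_i R_{2n}$-contribution dies because $R_{2n}\sigma^i(c_n) = 0$ for $i\le n-1$. Hence $K_i K_j^{(i)} = p^{-2}Y_i Y_j^{(i)}\, c_n\,\sigma^{i+j-1}(R_{n+1})$; since $c_n = S'_0 e_n$, this is exactly $S'_0 M$ with $M$ as described, and its image is $\mathrm{Span}_K\{c_n\} = \mathrm{Span}_K\{v_n\}$.

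For part (2), I argue by induction on $l\ge 2$, the base case $l=2$ being part (1). Assume $\prod_{j=1}^l K_{i_j}^{(s_j)} = S'_0 M$ with $M$ having all rows zero except the $n\thh$, whose value is $r := p^{-l}\bigl(\prod_{j=1}^l Y_{i_j}^{(s_j)}\bigr)\sigma^{s-1}(R_{n+1})$, where $s_j = i_1+\cdots+i_{j-1}$ and $s = i_1+\cdots+i_l$. Multiplying on the right by $K_{i_{l+1}}^{(s)}$ we need the scalars $r\sigma^s(c_n)$ and (when $i_{l+1}<n$) $r\sigma^s(c_{i_{l+1}})$. Pulling $\sigma^{s-1}$ out of the contraction gives $\sigma^{s-1}(R_{n+1})\sigma^s(c_n) = \sigma^{s-1}(R_{n+1}\cdot\sigma(c_n)) = \sigma^{s-1}(R_{n+1}c_{n+1}) = 1$, so $r\sigma^s(c_n) = p^{-l}\prod Y_{i_j}^{(s_j)}$; and analogously $r\sigma^s(c_{i_{l+1}}) = p^{-l}\prod(\cdot)\cdot\sigma^{s-1}(R_{n+1}c_{i_{l+1}+1}) = 0$, because $i_{l+1}+1\le n$ so that $\delta_{n+1,i_{l+1}+1} = 0$. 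Consequently the $c_{i_{l+1}} R_{2n}$ piece of $K_{i_{l+1}}^{(s)}$ is annihilated and the product becomes
\[
S'_0 M\cdot K_{i_{l+1}}^{(s)} = c_n\cdot p^{-(l+1)}\Bigl(\prod_{j=1}^{l+1} Y_{i_j}^{(s_j)}\Bigr)\sigma^{s+i_{l+1}-1}(R_{n+1}),
\]
which has the required shape with image in $\mathrm{Span}_K\{v_n\}$.

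The main bookkeeping obstacle is the three-way case split $i\in\{<n,\,n,\,n+1\}$ for both $K_i$ and $K_j^{(l)}$. The $i = n+1$ case in particular arises from the off-diagonal block piece $CD^{(1)}$ of $FF^{(1)}$ and carries an extra $\sigma$-shift on the ``row'' factor; however once this shift is absorbed via $R_1 = \sigma^n(R_{n+1})$, all three cases fit into the single uniform formula above, and the remainder of the argument is a mechanical application of the scalar identities $R_k\sigma^l(c_m) = \delta_{?,?}$.
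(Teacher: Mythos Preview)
Your proof is correct and follows essentially the same strategy as the paper's: both exploit that each $K_i$ is a sum of at most two rank-one matrices, together with the Frobenius relations from Lemma~\ref{frobactionrows}, to reduce the vanishing $K_iK_j^{(l)}=0$ (for $l<i$) and the explicit form of $K_iK_j^{(i)}$ to elementary linear algebra. The paper phrases this via $\ker K_i$ and $\im K_j^{(l)}$ in the basis $\{v_i,w_i\}$, whereas you work directly with the columns $c_k$ of $S'_0$ and the rows $R_k$ of $(S'_0)^{-1}$ and compute the scalar contractions $R_k\sigma^l(c_m)$ systematically; the inductive step for part (2) is identical in both arguments. One small slip: your assertion that $\sigma^l(c_n)\in\Span_W\{c_{n+1},\dots,c_{n+l}\}$ is literally false at $l=0$, but this is harmless since only $l\ge 1$ is used and the scalar identities $R_{n+i}\sigma^l(c_n)=\delta_{l,i}$ remain valid at $l=0$ anyway.
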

\begin{proof}
(1) By \S\ref{Ki}, $\ker K_i=\Span_K\{v_1,\dots, v_n, w_1, \dots, w_{i-1}, w_{i+1}, \cdots, w_{n-1}\}$ for $1\leq i<n$, and $\ker K_n=\Span_K\{v_1,\dots, v_n, w_1,\dots, w_{n-1}\}$, $\ker K_{n+1}=\Span_K\{\varphi(v_1),\dots, \varphi(v_n), \varphi(w_1)\hdots \varphi(w_{n-1})\}$. Note that if we view $v_i,w_i$ as vectors in $K^{2n}$ by using the basis $\{e_i,f_i\}_{i=1}^n$, then $\varphi(v_i),\varphi(w_i)$ are just applying $\sigma$ to all coordinates.
    On the other hand, $\im K_j=\Span_K\{v_j,v_n\}$ for $1\leq j\leq n$ and $\im K_{n+1}= \Span_K\{v_n\}$; hence by \Cref{cyclicfrob}(3), for $1\leq l \leq n$, \[\im K_j^{(l)}=\varphi^l(\im K_j)\subset \Span_K\{v_1,\dots, v_n, w_1,\dots,w_l\},\] and $\im K_j^{(n+1)}\subset \Span_K\{\varphi(v_1),\dots,\varphi(v_n),\varphi(w_1),\dots,\varphi(w_n)\}$. 
    Therefore, $K_iK_j^{(l)}=0$ if $l<i$. 
    
    Suppose now that $l=i$. Then $\im K_i K_j^{(i)}=\Span_K\{K_i w_i\}=\Span_K\{v_n\}$ for $i\leq n$ and $\im K_{n+1} K_j^{(n+1)}=\Span_K\{K_{n+1}\varphi(w_n)\}=\Span_K\{v_n\}$. Thus the matrix $M$ has only its $n\thh$ row non-zero.
We now compute the $n\thh$ row of $M$. For $i,j \leq n$, note that $M= A_i (S'_0)^{-1} (S'_0)^{(i)} A_j^{(i)} ((S'_0)^{-1})^{(i)}$; if $i=n+1$ or $j=n+1$, the matrix $M$ is given by the same formula once we replace the $(S'_0)^{-1}$ after $A_i$ or $A_j$ by $((S'_0)^{-1})^{(1)}$. For $j\leq n$, the product $A_j^{(i)} ((S'_0)^{-1})^{(i)}$ has only its $j\thh$ and $n\thh$ rows non-zero (if $j=n$, then only the $n\thh$ row is non-zero), and these rows equal $-Y_j^{(i)}p^{-1}\sigma^{i}(R_{2n})$ and $Y_j^{(i)}p^{-1}\sigma^{i}(R_{n+j})$ respectively (if $j=n$, the row $Y_n^{(i)}p^{-1}\sigma^{i}(R_{2n})$); and $A_{n+1}^{(i)} ((S'_0)^{-1})^{(i+1)}$ only has its $n\thh$ row non-zero and its $n\thh$ row is $Y_{n+1}^{(i)}p^{-1}\sigma^{i+1}(R_{2n})$. Similarly, for $i\leq n$, the $n\thh$ row in the product $A_i(S'_0)^{-1}$ equals $Y_ip^{-1}R_{n+i}$; the $n\thh$ row in $A_{n+1}((S'_0)^{-1})^{(1)}$) equals $Y_{n+1}p^{-1}\sigma(R_{2n})=Y_{n+1}p^{-1}R_1$ by \Cref{frobactionrows}.
For $j<n$, by the above computation, we write $A_j^{(i)} ((S'_0)^{-1})^{(i)}$ as $(-Y_j^{(i)}p^{-1})e_j\sigma^{i}(R_{2n})+(Y_j^{(i)}p^{-1})e_n\sigma^{i}(R_{n+j})$, where $e_j$ (resp. $e_n$) is the column vector with all coordinates $0$ expect the $j\thh$ (resp. $n\thh$) coordinates being $1$. Then \[(S'_0)^{(i)}A_j^{(i)} ((S'_0)^{-1})^{(i)}=(-Y_j^{(i)}p^{-1})\varphi^i(pv_j)\sigma^{i}(R_{2n})+(Y_j^{(i)}p^{-1})\varphi^i(pv_n)\sigma^{i}(R_{n+j}).\] By definition of $R_{n+i}, R_1$, we have 
$R_{n+i}v_j=0=R_1 w_j$, $R_{n+i} w_j=0$ for $j\neq i$, $R_{n+i} w_i=1$, $R_1 v_j=0$ for $j>1$, and $R_{1}( pv_1)=1$.
For $i\leq n$, the coefficient of $w_i$ in $\varphi^i(pv_j)$ (resp. $\varphi^i(pv_n)$) is $0$ (resp. $1$) by \Cref{cyclicfrob}(3) and thus the $n^{\text th}$ row of $M$ is $ Y_iY_j^{(i)}p^{-2}\sigma^i(R_{n+j})=Y_iY_j^{(i)}p^{-2} \sigma^{i+j-1}(R_{n+1})$ by \Cref{frobactionrows}. The cases when $i=n+1$ or $j=n,n+1$ also follow from \Cref{cyclicfrob} and \Cref{frobactionrows} by direct computations as above.

(2) We prove by induction. Indeed, we only need to verify the expression of $M$. The base case is just (1). We assume that $i_1\leq n$ and the case $i=n+1$ follows by a similar computation. Note that \[M=A_{i_1}(S'_0)^{-1}(\prod_{j=2}^l K_{i_j}^{(i_2+\cdots+i_{j-1})})^{(i_1)}=A_{i_1}(S'_0)^{-1} (S'_0)^{(i)} (Y_{i_2}^{(i_1)}\cdots Y_{i_l}^{(i_1+\cdots +i_{l-1})}p^{l-1})e_n\sigma^{i_1+\cdots+i_l-1}(R_{n+1})\] by the induction hypothesis. By the computation in (1), we have $A_{i_1}(S'_0)^{-1} (S'_0)^{(i)}e_n=(Y_{i_1}p^{-1})e_n$ and thus the assertion follows.
\end{proof}

The following lemma pick out the terms with minimal $t$-adic valuation among those with a fixed $p$-adic valuation.

\begin{para}\label{Notation_Imin}
We introduce some notation for the lemmas. For $r\in \bZ_{>0}$, define $\bI_r= \{1,2,\dots, n+1 \}^r$. For $I = (i_1,\dots, i_r) \in \bI_r$, define $P_I=K_{i_1}\cdot K_{i_2}^{(i_1)}\cdot K_{i_3}^{(i_1 + i_2)}\cdots K_{i_r}^{(i_1+i_2+\cdots+ i_{r-1})}$ and define the \emph{weight} of $I$, denoted by $\mu_I$, to be $\sum_{j=1}^r i_j$.
By \Cref{lintopleft}, we write $P_I=S'_0 M_I$ and note that all nonzero entries in $P_I$ have the same $t$-adic valuation $\sum_{j=1}^r p^{i_1+\cdots+ i_{j-1}}a_{i_j}=:\nu_I$ (recall that $a_i=v_t(Y_i)$).

Note that in the product expansion of $\F(1)$, among all terms with a $p$-adic valuation $-r$, the ones with the minimal $t$-adic valuation have to be of the form $P_I$ with $I\in \bI_r$. Let $\nu_r$ denote this minimal $t$-adic valuation. Define $\bI^{\min}_r = \{I \in \bI_r: v_t(P_I) = \nu_r \}$. In other words, among all terms with $p$-adic valuation $-r$ in the product expansion of $\F(1)$, the ones with minimal $t$-adic valuations are $P_I, I\in \Imin_r$. The following lemma provides some information of the set $\Imin_r$.
\end{para}

\begin{lemma}\label{minval}
Notation as in \S\ref{Notation_Imin} and let $I = (i_1,\dots, i_r) \in \bI^{\min}_r$. Then: \begin{enumerate}
    \item $(i_2,\dots, i_r) \in \bI^{\min}_{r-1}$. Conversely, if $(j_2,\dots j_r) \in \bI^{\min}_{r-1}$, then there exists $j_1\in \{1,\dots, n+1 \}$ such that $(j_1,j_2,\dots, j_r) \in \bI^{\min}_{r-1}$.
    
    \item $i_1 \leq i_2\leq  \cdots \leq i_r$ and $a_{i_1} \geq a_{i_2} \geq \cdots \geq a_{i_r}$.
    
    \item Let $J = (j_1,\dots j_r)\in \Imin_r$. Then $(l_1,\dots, l_r)\in \Imin_r$, where each $l_\alpha$ is either $i_\alpha$ or $j_\alpha$ for $1\leq \alpha \leq r$. 
    
    \item Let $J$ be as in (3). Then $|\mu_I - \mu_J | < n+1$.
    
    \item Suppose that $|\Imin_r| > 1$. Then there exist two elements in $\Imin_r$ with different weights. Further, there is a unique $I \in \Imin_r$ with maximal weight, and a unique $J \in \Imin_r$ with minimal weight. 
\end{enumerate}
\end{lemma}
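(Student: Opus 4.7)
The proof proceeds through the five parts sequentially, driven throughout by the recursive identity $\nu_I = a_{i_1} + p^{i_1}\,\nu_{(i_2,\dots,i_r)}$.

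\emph{Part (1)} is immediate from this identity. Forward: if $(i_2,\dots,i_r)$ failed to lie in $\Imin_{r-1}$, a strictly better tail would give a strictly smaller $\nu$ after prepending $i_1$, contradicting $I \in \Imin_r$. For the converse, given $(j_2,\dots,j_r) \in \Imin_{r-1}$ and any $I \in \Imin_r$, setting $j_1 := i_1$ yields $\nu_{(j_1,j_2,\dots,j_r)} = a_{i_1} + p^{i_1}\nu_{r-1}^* = \nu_I = \nu_r^*$.

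\emph{Part (2)} uses two adjacent exchange moves. Suppose $i_k > i_{k+1}$. If $a_{i_k} \geq a_{i_{k+1}}$, replacing $i_k$ by $i_{k+1}$ makes every summand of $\nu$ weakly smaller and strictly so in position $k{+}1$; if instead $a_{i_k} < a_{i_{k+1}}$, swapping $i_k$ and $i_{k+1}$ strictly decreases $\nu$ because $a_{i_{k+1}}(p^{i_k}-1) > a_{i_k}(p^{i_{k+1}}-1)$. Either case contradicts minimality, proving $i_k \leq i_{k+1}$. For the $a$-monotonicity, if $a_{i_k} < a_{i_{k+1}}$ then the first half forces $i_k < i_{k+1}$, and replacing $i_{k+1}$ by $i_k$ strictly drops $\nu$, a final contradiction.

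\emph{Part (3)} is an induction on $r$, with base $r=1$ trivial. For the step, (1) gives $(i_2,\dots,i_r),(j_2,\dots,j_r) \in \Imin_{r-1}$, and the inductive hypothesis places any hybrid tail $(l_2,\dots,l_r)$ in $\Imin_{r-1}$; since $l_1 \in \{i_1,j_1\}$ is the first coordinate of some element of $\Imin_r$, the converse half of (1) promotes $(l_1,\dots,l_r)$ to $\Imin_r$. \emph{Part (5)} then follows quickly: iterating (3) produces a unique top element $L_{\max}$ and unique bottom element $L_{\min}$ of $\Imin_r$ as coordinate-wise extrema, and uniqueness of max/min weight is automatic because $I \leq L_{\max}$ coordinate-wise forces $\mu_I \leq \mu_{L_{\max}}$ with equality iff $I = L_{\max}$; existence of two different weights when $|\Imin_r|>1$ comes from (3) applied at any differing coordinate $\alpha$, as $(i_1,\dots,j_\alpha,\dots,i_r) \in \Imin_r$ has weight $\mu_I + (j_\alpha - i_\alpha) \neq \mu_I$.

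\emph{Part (4)} is the main obstacle. Writing $L_{\max}=(M_1,\dots,M_r)$ and $L_{\min}=(\ell_1,\dots,\ell_r)$, it suffices to bound $\mu_{L_{\max}} - \mu_{L_{\min}}$. The key observation is that for each $\alpha < r$ the hybrid $(\ell_1,\dots,\ell_{\alpha-1}, M_\alpha, \ell_{\alpha+1},\dots,\ell_r)$ lies in $\Imin_r$ by (3) and must be non-decreasing by (2), forcing the interior inequalities $M_\alpha \leq \ell_{\alpha+1}$. The weight difference then telescopes:
\[
\mu_{L_{\max}} - \mu_{L_{\min}} \;=\; (M_r - \ell_1) + \sum_{\alpha=1}^{r-1}(M_\alpha - \ell_{\alpha+1}) \;\leq\; M_r - \ell_1 \;\leq\; (n+1)-1 = n,
\]
which yields (4). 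The subtlety is that the naive coordinate-wise bound $M_\alpha - \ell_\alpha \leq n$ only gives $\leq rn$; it is crucial to rearrange the sum so that the interior inequalities supplied by combining (2) and (3) collapse it to the single boundary term $M_r - \ell_1$.
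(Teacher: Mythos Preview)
Your proof is correct and follows essentially the same approach as the paper throughout: the recursion $\nu_I = a_{i_1} + p^{i_1}\nu_{(i_2,\dots,i_r)}$ for (1), adjacent exchange moves for (2), induction via (1) for (3), coordinatewise extrema for (5), and a telescoping identity combining (2) and (3) for (4). The one mild variation is in (4), where you first reduce to the coordinatewise extrema $L_{\max},L_{\min}$ before telescoping, whereas the paper telescopes directly with arbitrary $I,J$; your version is arguably cleaner, since the paper's printed argument pairs its decomposition with the inequality $j_\alpha \leq i_{\alpha+1}$, which actually yields a lower bound rather than the needed upper bound (the symmetric inequality $i_\alpha \leq j_{\alpha+1}$, or equivalently your $M_\alpha \leq \ell_{\alpha+1}$, is what makes the telescoping collapse correctly).
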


\begin{proof}
\begin{enumerate}
    \item By \S\ref{Notation_Imin}, we have $\nu_I = a_{i_1} + p^{i_1}\nu_{(i_2,\dots,i_r)}$, and thus $\nu_{(i_2,\dots,i_r)}$ has to be minimized in order for $\nu_I$ to be minimal. On the other hand, take $j_1 = i_1$ and then we have $\nu_{(j_1,\dots, j_r)}=a_{i_1}+p^{i_1}\nu_{(j_2,\dots,j_r)}=a_{i_1}+p^{i_1} \nu_{(i_2,\dots,i_r)}= \nu_r$.
     
    \item We prove the assertion by induction on $r$. By the inductive hypothesis and (1), we may assume that $i_2\leq i_3 \leq \cdots i_r$ and $a_{i_2}\geq a_{i_3} \geq \cdots \geq a_{i_r}$. 
    
    Assume for contradiction that $i_1 > i_2$. If $a_{i_2}\leq a_{i_1}$, then $\nu_{(i_2,i_2,i_3, \hdots i_r)} < \nu_I$, which contradicts with $I\in \Imin_r$. Therefore $a_{i_1} < a_{i_2}$. Let $I' = (i_2,i_1,i_3,\dots, i_r)$. We have $\nu_{I'} - \nu_I = \nu_{(i_2,i_1)} - \nu_{(i_1,i_2)} = a_{i_1}(p^{i_2} - 1) - a_{i_2}(p^{i_1}-1)<0$. Thus we must have $i_1\leq i_2$. 
    
    Now assume for contradiction that $a_{i_1} <a_{i_2}$. Then  $\nu_{(i_1,i_1,i_3,\hdots i_r)} < \nu_I$. Thus $a_{i_1}\geq a_{i_2}$ as required. 
    
    \item Suppose that $J = (j_1,\dots, j_r)$. By (1), it follows that $\nu_{(j_2,\dots, j_r)}= \nu_{(i_2,\dots, i_r)} = \nu_{r-1}$, whence $a_{i_1} + p^{i_1}\nu_{r-1} = a_{j_1} + p^{j_1}\nu_{r-1}$. It follows that $(i_1,j_2,j_3,\dots, j_r), (j_1,i_2,\dots, i_r) \in \Imin_r$. (3) now follows by induction on $r$. 
    
    \item $\mu_I - \mu_J = i_1 - (\sum_{\alpha=1}^{r-1} (j_\alpha - i_{\alpha+1})) - j_r$. By (2) and (3), $j_\alpha \leq i_{\alpha+1} $; since $j_r > 0$, then $\mu_I - \mu_J < i_1 \leq n+1$. Similarly, $\mu_J - \mu_I < n+1$; thus the result follows. 
    
    \item For $1\leq \alpha \leq r$, set $M_\alpha = \max_{J \in \Imin_r} \{j_\alpha \}$ and $m_\alpha = \min_{J \in \Imin_r}\{j_\alpha\}$. By applying (3) repeatedly the set of all $J \in \Imin_r$, it follows that $(M_1,\dots, M_r), (m_1,\dots, m_r)\in \Imin_r$. By definition, $(M_1, \dots, M_r)$ is the unique element of $\Imin_r$ with maximal weight, and $(m_1,\dots, m_r)\in \Imin_r$ is the unique element with minimal weight. \qedhere
\end{enumerate}
\end{proof}

\subsection*{Other preparation lemmas} Recall that $\overline{R}_i$ denote $R_i \bmod p$.

\begin{lemma}\label{rowdotFp}
For any $0\neq v\in \bF_p^{2n}$, we have $\overline{R}_{n+1}  v \neq 0$. Consequently, if $\{z_1,\dots, z_{2n}\}$ is a basis of $\bF_p^{2n}$, then $\overline{R}_{n+1} z_1, \dots, \overline{R}_{n+1}  z_{2n} \in k$ are linearly independent over $\bF_p$. 
\end{lemma}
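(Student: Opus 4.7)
The plan is as follows. First, the consequence follows immediately from the first assertion: any $\bF_p$-linear relation $\sum_i \alpha_i (\overline{R}_{n+1} z_i) = 0$ in $k$ rewrites as $\overline{R}_{n+1} \cdot v = 0$ with $v = \sum_i \alpha_i z_i \in \bF_p^{2n}$, forcing $v = 0$ and hence (since $\{z_i\}$ is a basis of $\bF_p^{2n}$) all $\alpha_i = 0$. So I focus on the first statement.

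I would argue by contradiction: suppose $0 \neq v \in \bF_p^{2n}$ satisfies $\overline{R}_{n+1} \cdot v = 0$. The key observation is that because $v$ has $\bF_p$-entries, $\sigma$ fixes $v$ componentwise; applying $\sigma^i$ to the scalar identity $\overline{R}_{n+1}\cdot v = 0$ therefore yields $\sigma^i(\overline{R}_{n+1}) \cdot v = 0$ for every $i \geq 0$. I would then invoke the mod $p$ reduction of \Cref{frobactionrows} (valid since $\sigma$ commutes with reduction mod $p$) to compute these iterates: for $0 \leq i \leq n-1$, one has $\sigma^i(\overline{R}_{n+1}) = \overline{R}_{n+1+i}$, giving $\overline{R}_{n+j} \cdot v = 0$ for $1 \leq j \leq n$, and then $\sigma^n(\overline{R}_{n+1}) = \sigma(\overline{R}_{2n}) = \overline{R}_1$, so $\overline{R}_1 \cdot v = 0$.

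A short induction, using $\sigma(\overline{R}_l) = \overline{R}_{l+1} - \bar{b}_l \overline{R}_1$ for $1 \leq l \leq n-1$, shows that $\sigma^{n+j}(\overline{R}_{n+1}) \in \overline{R}_{j+1} + \Span_k\{\overline{R}_1, \ldots, \overline{R}_j\}$ for $1 \leq j \leq n-1$; combined with the previous vanishings, this forces $\overline{R}_{j+1} \cdot v = 0$ in turn. Thus $\overline{R}_i \cdot v = 0$ for all $1 \leq i \leq 2n$, and since $\{\overline{R}_i\}_{i=1}^{2n}$ is a $k$-basis of $k^{2n}$ (because $(S'_0)^{-1} \in \GL_{2n}(W)$, so its mod-$p$ reduction lies in $\GL_{2n}(k)$), we conclude $v = 0$, a contradiction.

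The argument is essentially formal once one uses the $\sigma$-equivariance properly; the only place where one might slip is in bookkeeping the Frobenius iterates past step $n$, but this is controlled by the fact that the correction terms $\bar b_l \overline{R}_1$ hit only the span already known to annihilate $v$.
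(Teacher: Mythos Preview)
Your proof is correct and follows essentially the same approach as the paper: both use that $v\in\bF_p^{2n}$ is $\sigma$-fixed to deduce $\sigma^i(\overline{R}_{n+1})\cdot v=0$ for all $i$, and then invoke \Cref{frobactionrows} to see that these iterates span $k^{2n}$. The only difference is cosmetic---the paper states in one line that $\Span_W\{\sigma^i(R_{n+1})\}_{i=0}^{2n-1}=W^{2n}$, while you track the induction on the iterates explicitly.
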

\begin{proof}
If $\overline{R}_{n+1} v=0$, then $\sigma^i(\overline{R}_{n+1}) v=0$ for all $i\geq 0$. By \Cref{frobactionrows}, $\Span_W\{\sigma^i(R_{n+1})\}_{i=0}^{2n-1}=W^{2n}$; thus $\overline{R}_{n+1},\sigma(S_{n+1}),\dots, \sigma^{2n-1}(\overline{R}_{n+1})$  form a basis of $k^{2n}$. Therefore, if $\overline{R}_{n+1} v=0$, then $v=0$; this proves the first assertion.

For the second assertion, suppose there exists a non-trivial linear relation $\sum_{i=1}^{2n} a_i (\overline{R}_{n+1} z_i) = 0$ with $a_i\in \bF_p$. Then, $\overline{R}_{n+1} (\sum_{i=1}^{2n} a_i z_i) = 0$, which contradicts the first assertion. 
\end{proof}

\begin{lemma}\label{lastgenericdecay}
Let $\alpha_0, \dots, \alpha_n \in k$ such that $(\alpha_0,\dots,\alpha_n)\neq(0,\dots,0)$. Consider the linear combination $\displaystyle \overline{R} = \sum_{i=0}^n \alpha_i \sigma^{i}(\overline{R}_{n+1})$. Then $\dim_{\bF_p}\{v\in \bF_p^{2n}\mid \overline{R} v = 0\}\leq n$.
\end{lemma}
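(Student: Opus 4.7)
The plan is to argue by contradiction via the theory of $\bF_p$-linearized (additive) polynomials, using Lemma~\ref{rowdotFp} as the key input. Suppose we have $n+1$ linearly independent vectors $v_0,\dots,v_n\in\bF_p^{2n}$ all satisfying $\overline{R}\,v_j=0$. The key observation is that $\sigma$ acts trivially on $\bF_p^{2n}$, so for each $j$ we may write
\[
0=\overline{R}\,v_j=\sum_{i=0}^n \alpha_i\,\sigma^i(\overline{R}_{n+1})\,v_j=\sum_{i=0}^n \alpha_i\,\sigma^i(\overline{R}_{n+1}\,v_j)=\sum_{i=0}^n \alpha_i\,c_j^{p^i},
\]
where $c_j:=\overline{R}_{n+1}\,v_j\in k$.

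Next, I would invoke Lemma~\ref{rowdotFp}: extend $v_0,\dots,v_n$ to an $\bF_p$-basis of $\bF_p^{2n}$; then the second assertion of that lemma gives that $\overline{R}_{n+1}\,v_0,\dots,\overline{R}_{n+1}\,v_{2n-1}\in k$ are $\bF_p$-linearly independent. In particular the first $n+1$ of these, namely $c_0,\dots,c_n$, are $\bF_p$-linearly independent elements of $k$.

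Now consider the $\bF_p$-linearized polynomial $P(x)=\sum_{i=0}^n \alpha_i x^{p^i}\in k[x]$, which is nonzero by the hypothesis $(\alpha_0,\dots,\alpha_n)\neq(0,\dots,0)$. By construction each $c_j$ is a root of $P$, and the root set of any nonzero additive polynomial of degree $\leq p^n$ is an $\bF_p$-subspace of $k$ of $\bF_p$-dimension at most $n$ (since it has at most $p^n$ roots in $k$). But we exhibited $n+1$ roots $c_0,\dots,c_n$ that are $\bF_p$-linearly independent, a contradiction.

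\textbf{Main obstacle.} There is essentially no deep obstacle: the only subtlety is ensuring that the elements $c_j=\overline{R}_{n+1}\,v_j$ really are $\bF_p$-linearly independent, which is exactly what the consequence in Lemma~\ref{rowdotFp} supplies once we extend the $v_j$ to a basis. Everything else is the standard fact that roots of a nonzero $\bF_p$-linearized polynomial of $p$-degree $\leq n$ form an $\bF_p$-vector space of dimension $\leq n$ (equivalently, the non-vanishing of the Moore determinant $\det(c_j^{p^i})$ for $\bF_p$-linearly independent $c_j$).
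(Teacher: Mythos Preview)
Your proof is correct and is essentially the same argument as the paper's: both reduce to the contradiction that the $n+1$ elements $c_j=\overline{R}_{n+1}v_j\in k$ are $\bF_p$-linearly independent (via Lemma~\ref{rowdotFp}) yet simultaneously annihilated by the nonzero additive polynomial $\sum_i\alpha_i x^{p^i}$. The paper packages this last step as the invertibility of the Moore matrix $(c_j^{p^i})_{i,j}$, while you phrase it as the root-count bound for a linearized polynomial of degree $\le p^n$; these are two equivalent formulations of the same fact, as you yourself note.
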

\begin{proof}
For any $z\in \bF_p^{2n}$, note that $\overline{R}  z = \vec{\alpha} \vec{\beta}(z)$, where
\[\vec{\alpha}=(\alpha_0,\dots, \alpha_n),\,\vec{\beta}(z) = \left[
\begin{array}{c}
\overline{R}_{n+1} z\\
(\overline{R}_{n+1} z)^{(1)}\\
\vdots\\
(\overline{R}_{n+1} z)^{(n)}
\end{array}
\right]. \]

Now assume for contradiction that there exist linearly independent vectors $z_1, z_2,\dots, z_{n+1}\in \bF_p^{2n}$ such that $\overline{R}z_j = 0$. This implies that $\vec{\alpha} \vec{\beta}(z_j) = 0$ for every $1\leq j\leq n+1$. In particular, this implies that the row vector $\vec{\alpha}$ is in the (left) kernel of the $(n+1)\times (n+1)$ matrix $M(z)$ whose $j^{th}$ column is $\vec{\beta}(z_j)$. This contradicts the assumption $\vec{\alpha}\neq 0$ once we show that $M(z)$ is invertible. Indeed, note that the $(i+1)\thh$ row of $M(z)$ is the Frobenius twist of the $i^{th}$ row, and hence $M(z)$ is a Moore matrix. The determinant of a Moore-matrix vanishes if and only if the entries of the first row are linearly independent over $\bF_p$. However, the first row of $M(z)$ consists of the elements $\{\overline{R}_{n+1} z_i\}_{i=1}^{n+1}$, and by \Cref{rowdotFp} these elements are linearly independent over $\bF_p$. The lemma follows. 
\end{proof}

\subsection*{Decay in the non-superspecial case}

\begin{proof}[Proof of Theorem \ref{genericdecay}]
We follow the argument in \S\ref{dJ+Kisin}.
Let $w$ be a primitive vector (that is, not a multiple of $p$) in $\Span_{\bZ_p}\{e_1,\dots, e_n, f_1,\dots, f_n\}\subset \cL$. With respect to the basis $\{e_i,f_i, e'_j, f'_j\}$, we view $w$ as a vector in $W^{2n+2m}$, which has the last $2m$ coordinates being $0$. Let $w_0$ denote the vector in $W^{2n}$ whose coordinates are the first $2n$ coordinates of $w$ (indeed, as vectors in $\cL$, $w=w_0$). Then for any $r,s\in \bZ_{\geq 0}$, if $p^{r}\F(1)w_0$ is not integral in $\bbL_\cris(D_s)$, then neither is $p^{r} \F w$. Thus to prove the Decay Lemma, it suffices to work with $\F(1)$. Thus for a general $\bL_1$, we may embed it into the split case as described in \S\ref{embedding} to reduce the proof to the split case.

Notation as in \S\ref{Notation_Imin}; let $M_{r+1}$ denote the matrix in $M_{2n}(K)$ such that \[\sum_{I \in \Imin_{r+1}} P_I = S'_0\sum_{I \in \Imin_{r+1}}M_I=t^{\nu_{r+1}}M_{r+1} + t^{\nu_{r+1}+1}M_{2n}(K[[t]]).\] By definition, $p^{r+1}M_{r+1}\in M_{2n}(W)$.
First we follow the reduction step as in the last paragraph of the proof of Thm.~5.1.2 assuming Prop.~5.1.3 in \cite{MST}.
Note that if $w_0 \bmod p\notin (p^{r+2} M_{r+2})\bmod p$, then the coefficient of $t^{\nu_{r+2}}$ in $p^rw_0$, with respect to the basis $\{e_i,f_i\}$ does not lie in $p^{-1}W$. Since $p\bL\subset \cL\otimes_{\bZ_p} W$, then the coefficient of $t^{\nu_{r+2}}$ in $p^rw_0$, with respect to the basis $\{v_i,w_i\}$ does not lie in $W$. Thus, $p^r w_0\notin \bbL_\cris(D_s)$ for $s>\nu_{r+2}/p$. Thus the following claim implies the Decay lemma.

\begin{claim}
\begin{enumerate}
    \item  There exists a saturated $\bZ_p$-submodule $\Lambda\subset \cL$ of rank at least $n$ such that if $v\in \Lambda$ is a primitive vector, then $v\bmod p\notin \ker (p^{r+2}M_{r+2} \bmod p)$ for all $r\in \bZ_{\geq 0}$.
    \item For all  $r\in \bZ_{\geq 0}$, we have $\nu_{r+2}/p<h_r+1$, where $h_r$ is defined in \Cref{decaydef}.
\end{enumerate}
\end{claim}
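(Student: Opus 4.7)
The plan is to prove the two parts of the claim separately; part (2) is a direct computation, while part (1) is the substantial step.

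For part (2), $\nu_{r+2}$ is the minimum of $\nu_I$ over $I\in\bI_{r+2}$. Evaluating at the sequence $I=(1,1,\dots,1)\in\bI_{r+2}$ gives $\nu_I = a_1(1+p+\cdots+p^{r+1}) = h(1+p+\cdots+p^{r+1})$, since $a_1=h$. Dividing by $p$ yields
\[
\nu_{r+2}/p \;\leq\; h(p^r + p^{r-1} + \cdots + 1 + 1/p),
\]
and by the definition $h_r = \lfloor h(p^r+\cdots+1+1/p)\rfloor$ we get $\nu_{r+2}/p < h_r+1$, as required.

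For part (1), the first step is to compute $p^{r+2}M_{r+2}\bmod p$ explicitly. By \Cref{lintopleft}(2), each $P_I = S'_0 M_I$ for $I\in\bI_{r+2}$ has $M_I$ zero except in the $n\thh$ row, which equals $p^{-(r+2)}\prod_j Y_{i_j}^{(i_1+\cdots+i_{j-1})}\cdot\sigma^{\mu_I-1}(R_{n+1})$. Since each $Y_i$ has Teichm\"uller leading coefficient, summing over $I\in\Imin_{r+2}$, extracting the $t^{\nu_{r+2}}$-coefficient, multiplying by $p^{r+2}$ and reducing modulo $p$, the unique nonzero row of $p^{r+2}M_{r+2}\bmod p$ becomes $\sum_{I\in\Imin_{r+2}}\bar{c}_I\,\sigma^{\mu_I-1}(\overline{R}_{n+1})$ with $\bar{c}_I\in k^\times$. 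By \Cref{minval}(4), the weights $\mu_I$ lie in an interval of length at most $n$, so after shifting by $\sigma^{-c}$ (an automorphism of $k$ that fixes $\bF_p^{2n}$ pointwise and hence preserves the kernel), the form has the shape treated by \Cref{lastgenericdecay}, yielding $\dim_{\bF_p}\ker(p^{r+2}M_{r+2}\bmod p)\leq n$ inside $\cL_0/p\cL_0\cong\bF_p^{2n}$.

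The remaining step, and the main obstacle, is to construct the saturated $\bZ_p$-submodule $\Lambda\subset\cL_0$ of rank at least $n$ whose mod-$p$ reduction avoids every $\ker_r$ simultaneously. The difficulty is that there are a priori infinitely many such kernels (one for each $r$), each of codimension at least $n$ in $\bF_p^{2n}$, and the kernels depend on the curve $C$ through the coefficients $\bar{c}_I$, so a naive generic-position count does not apply. The plan is to pick $\Lambda\bmod p$ to be a specific $n$-dimensional $\bF_p$-subspace whose avoidance of such kernels is forced by the Moore-matrix structure underlying \Cref{lastgenericdecay}: using that $\{\sigma^j(\overline{R}_{n+1})\}_{j=0}^{2n-1}$ is a $k$-basis of $k^{2n}$ (\Cref{frobactionrows} together with the proof of \Cref{rowdotFp}), one chooses lifts in $\cL_0$ so that for every window of $n+1$ consecutive Frobenius twists of $\overline{R}_{n+1}$, any nontrivial $k$-linear combination restricts to an $\bF_p$-injective map on $\Lambda\bmod p$. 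Verifying this injectivity uniformly across all the windows $[\mu_{\min}-1,\mu_{\max}-1]$ that can arise from $\Imin_{r+2}$, $r\geq 0$, is the core difficulty; the expected approach is an adapted Moore-determinant nonvanishing argument, generalizing the single-vector computation in the proof of \Cref{lastgenericdecay} to an $n$-dimensional family of test vectors, which simultaneously gives the sharper rank-$n$ assertion beyond the rank-$2$ statement needed for the theorem.
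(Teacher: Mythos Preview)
Your argument for part (2) is correct and matches the paper's. Your bound on each individual kernel $\ker(p^{r+2}M_{r+2}\bmod p)\cap\bF_p^{2n}$ is also essentially correct, modulo one small point: when you collect terms by weight $\mu$, several $I\in\Imin_{r+2}$ may share the same $\mu$, so you need to know that the resulting linear combination is not identically zero before invoking \Cref{lastgenericdecay}. The paper handles this by \Cref{minval}(5): the elements of maximal and minimal weight are \emph{unique}, so the extremal coefficients survive.

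The genuine gap is in the step you flag as the ``main obstacle.'' Your proposed route---finding an $n$-dimensional $\bF_p$-subspace that avoids every kernel simultaneously via a uniform Moore-determinant argument over all windows---is never carried out, and it is unclear how to make it work: the windows $[\mu_{\min}-1,\mu_{\max}-1]$ drift with $r$, and a single $n$-dimensional subspace cannot be made injective for \emph{every} nontrivial linear combination on \emph{every} window by a Moore-type computation alone. The paper's actual argument sidesteps this entirely with a much simpler observation: the kernels are \emph{nested}. Using \Cref{minval}(1)(3) one has $\Imin_{r+2}=\bI'\times\Imin_{r+1}$ for some index set $\bI'$, which yields a relation of the form
\[
p^{r+2}M_{r+2}=\sum_{i\in\bI'}A_i\,(p^{r+1}M_{r+1})^{(i)},\qquad A_i\in M_{2n}(W).
\]
If $\bar v\in\bF_p^{2n}$ lies in $\ker(p^{r+1}M_{r+1}\bmod p)$, then since $\bar v$ is $\sigma$-fixed it also lies in $\ker\big((p^{r+1}M_{r+1})^{(i)}\bmod p\big)$ for every $i$, hence in $\ker(p^{r+2}M_{r+2}\bmod p)$. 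Thus $S_{r+1}\subset S_{r+2}$, so $S_\infty=\bigcup_j S_j$ is an increasing union of $\bF_p$-subspaces each of dimension at most $n$, hence is itself a subspace of dimension at most $n$. Any saturated $\Lambda\subset\cL_0$ with $\Lambda\bmod p$ complementary to $S_\infty$ then works for all $r$ at once.
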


For (1), by \Cref{lintopleft}(2), $p^{r+2}M_{r+2}$ only has its $n\thh$ row non-zero and its $n\thh$ row is a $W$-linear combination of $\{\sigma^{\mu_I-1}(R_{n+1})\}_{I\in \Imin_{r+2}}$. By \Cref{minval}, let $J,J' \in \Imin_{r+2}$ denote the unique elements such that $\mu_J = \max_{I\in \Imin_{r+2}}\mu_I$ and $\mu_{J'} = \min_{I\in \Imin_{r+2}}\mu_I$. Thus the $n\thh$ row of $p^{r+2}M_{r+2}\bmod p$ is a $k$-linear combination of $\{\sigma^{\mu-1}(\overline{R}_{n+1})\}_{\mu_{J'}\leq \mu \leq \mu_J}$ and by \Cref{minval}(4), this set consists at most $n+1$ elements. Note that since $J,J'$ are unique, the the coefficients of $\sigma^{\mu_{J'}-1}(\overline{R}_{n+1})$ and $\sigma^{\mu_J-1}(\overline{R}_{n+1})$ are non-zero in $k$. Then by \Cref{lastgenericdecay}, $\dim_{\bF_p}S_{r+1}\leq n$, where $S_{r+1}:=\{\bar{v}\in \bF_p^{2n}\mid \bar{v}\in \ker(p^{r+1}M_{r+1}\bmod p)\}$. By \Cref{minval}(1)(3), $\Imin_{r+2}=\bI'\times \Imin_{r+1}$, where \[\bI'=\{i\mid 1\leq i \leq n+1, \exists J(i)\in \Imin_{r+1}\text{ such that } (i,J(i))\in \Imin_{r+2}\}.\]
Then $p^{r+2}M_{r+2}=\sum_{i\in \bI'} A_i(p^{r+1}M_{r+1})^{(i)}$, where $A_i\in M_{2n}(W)$. Therefore, $S_{r+1}\subset S_{r+2}$. Thus $S_\infty:=\bigcup_{j=1}^\infty S_j$ is a subspace of $\bF_p^{2n}$ of dimension at most $n$. Thus there exists a saturated $\bZ_p$-submodule $\Lambda\subset \cL$ or rank at least $n$ such that $\Lambda\bmod p\cap S_\infty=\{0\}$ and any primitive vector $v\in \Lambda$ satisfies the desired condition.

For (2), note that $\nu_{r+2}\leq \nu_{(1,\dots,1)}=h(1 + \cdots + p^{r+1})$ since $h = a_1$; thus $h_r+1>h(p^r+\cdots+p^{-1})\geq \nu_{r+2}/p$.
\end{proof}

\begin{para}
Ogus \cite[Lem.~2, Prop.~11]{Ogus01} gives explicit description of the local equation of Newton strata of $\Sh_k$ and by using the explicit coordinates in \S\ref{Yn}, in the formal neighborhood of a supersingular point $P$ of type $2n$, the Newton stratum of codimension $s$ is cut out by the single equation $y_{s}=0$ in the Newton stratum of codimension $s-1$ for $s\leq n$. Let $C\rightarrow \Sh_k$ be a formal curve which specializes to $P$. Assume that the generic point of $C$ lies in the open Newton stratum of codimension $s-1\leq n-1$. We say a special endomorphism $w$ of $A[p^\infty]$ \emph{decays rapidly} if it satisfies the condition in \Cref{decaydef} with $h=v_t(y_s(t))=a_s$. The only place in the proof of \Cref{genericdecay} where we used the generic ordinary assumption of $C$ is Claim (2). Once we replace the computation there by $\nu_{r+2}\leq \nu_{(s,\dots,s)}=a_s(1+\cdots+p^{r+1})$, we obtain the following general version of the Decay Lemma for non-superspecial supersingular points.
\end{para}

\begin{theorem}[Generalized decay lemma in the generic case]\label{nonorddecay}
Suppose that $C\rightarrow \Sh_k$ is a formal curve which specializes to a non-superspecial supersingular point of type $2n$ (i.e., Artin invariant $n$) and is generically in an open Newton stratum of codimension $\leq n-1$. Then there exists a saturated rank $n$ submodule of special endomorphisms which decays rapidly. 
\end{theorem}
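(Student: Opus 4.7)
My plan is to repeat the proof of \Cref{genericdecay} almost verbatim, with the single geometric modification coming from the fact that $C$ generically lies in the codimension $s-1$ Newton stratum rather than the ordinary locus. By the Ogus description of the Newton strata recalled immediately before the theorem, in the formal coordinates at the supersingular point $P$ the closed stratum of codimension $s-1$ is cut out by $y_1 = y_2 = \cdots = y_{s-1} = 0$. Thus the formal curve $C$ lies in this closed stratum, which means that the pulled-back power series satisfy $Y_j(t) = 0$ for all $j < s$, i.e.\ $a_j = +\infty$ for such $j$.

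The key observation is that the proof of \Cref{genericdecay} organized the expansion of $\F(1) = \prod_{i \geq 0}(I + F^{(i)})$ according to the products $P_I$ indexed by $I \in \bI_r = \{1,\ldots,n+1\}^r$ and then isolated the contributions of minimal $t$-adic valuation via $\Imin_r$. With $a_j = +\infty$ for $j < s$, any such $P_I$ vanishes identically unless every entry of $I$ belongs to $\{s, s+1, \ldots, n+1\}$. Hence the entire analysis of \Cref{lintopleft,minval,rowdotFp,lastgenericdecay} can be reproduced with the range $\{s,\ldots,n+1\}$ in place of $\{1,\ldots,n+1\}$; all structural statements (uniqueness of the extremal-weight indices, the bound on the weight gap, the Moore-matrix linear independence) remain valid with this restricted range, and the only numerical change is that the bound $\nu_{r+2} \leq \nu_{(1,\ldots,1)}$ used in the original Claim (2) is replaced by $\nu_{r+2} \leq \nu_{(s,\ldots,s)}$.

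Carrying out the structural part of the argument with this restriction, the spread between the maximal- and minimal-weight elements of $\Imin_{r+2}$ drops from at most $n+1$ to at most $n+1-s$, so the $n\thh$ row of $p^{r+2}M_{r+2}\bmod p$ is a nontrivial $k$-linear combination of at most $n+2-s$ Frobenius twists $\sigma^{\mu-1}(\overline{R}_{n+1})$. A Moore-matrix argument analogous to \Cref{lastgenericdecay} then yields $\dim_{\bF_p} S_\infty \leq n+1-s$, producing a saturated $\bZ_p$-submodule of $\cL$ of rank at least $2n - (n+1-s) = n+s-1 \geq n$, from which one extracts a saturated rank-$n$ sub-submodule satisfying the rapid decay condition.

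The main obstacle I expect is verifying the numerical Claim (2) analogue, namely that $\nu_{(s,\ldots,s)}/p < h_r + 1$ with $h = a_s$ in \Cref{decaydef}; this reduces to an elementary manipulation of the geometric series involved once $h_r = [a_s(p^r + \cdots + 1 + 1/p)]$ is expanded explicitly. Finally, the reduction from general $\bL_1$ to the split case via the embedding trick of \S\ref{embedding}, together with the reduction from $\F$ to its top-left block $\F(1)$ at the start of the original proof, transfer verbatim; with all these ingredients in hand the theorem follows.
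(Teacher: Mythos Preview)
Your proposal is correct and its essential content—replacing the bound $\nu_{r+2}\leq \nu_{(1,\ldots,1)}$ by $\nu_{r+2}\leq \nu_{(s,\ldots,s)}=a_s(1+p+\cdots+p^{r+1})$ in Claim~(2)—is exactly what the paper does. The paper is more economical, though: it simply observes that the proof of Claim~(1) in \Cref{genericdecay} never invokes the generic-ordinariness hypothesis and therefore already yields a saturated submodule of rank $\geq n$ without modification, so your restriction of the index set to $\{s,\ldots,n+1\}$ and the sharpened weight-gap and Moore-matrix bounds (producing rank $\geq n+s-1$) are correct but unnecessary for the stated conclusion.
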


\section{Decay for superspecial points}\label{sec_decay_ssp}
The goal of this section is to prove a Decay Lemma for superspecial points (\Cref{decayspecial}). 
The computations in the proof of \Cref{decayspecial} go along very similar lines to the calculations carried out in \cite[\S 5]{MST}. We will therefore be brief and will refer to \cite{MST} whenever appropriate. 

Throughout this section, we work in the setting of a formal curve $C = \Spf k[[t]]\rightarrow \Sh_k$ which is generically ordinary, and specializes to a superspecial point $P$. Recall that $\cA/k[[t]]$ denotes the pullback of $\cA^\univ$, $h$ denotes the $t$-adic valuation of the local equation of the non-ordinary locus given in \Cref{eq-nonord}, and $\cL$ is the lattice of special endomorphisms of the $p$-divisible group at $P$.

In order to obtain sufficiently strong bounds to prove \Cref{thm_int}, we require a Decay Lemma which is slightly stronger than \Cref{genericdecay}. In order to do this, we introduce the notion of very rapid decay; also the following definition for rapid decay in the superspecial case is stronger than \Cref{decaydef}. 
\begin{defn} For a superspecial point $P$,
\begin{enumerate}
    \item We say that $w\in \cL$ \emph{decays rapidly} (resp. \emph{very} rapidly) if for every $r\in \bZ_{\geq 0}$, the special endomorphism $p^r w$ does not lift to an endomorphism of $\cA[p^\infty]$ modulo $t^{h'_r+1}$ (resp.$t^{h'_{r-1}+ap^r+1}$),for some $a\leq \frac{h}{2}$ independent of $r$; here $h'_r:=[h(p^r+\cdots+1)+a/p)]$ and $h'_{-1}:=[a/p]$.

    \item We say that $w\in \cL$ \emph{ decays rapidly (resp.~very rapidly) to first order} if $w$ does not extend to an endomorphism modulo $t^{[h+a/p]+1}$ (resp.~$t^{[a+a/p]+1}$) for some $a \leq \frac{h}{2}$.

    \item 
    A $\bZ_p$-submodule of $\cL$ \emph{decays rapidly} if every primitive vector in this submodule decays rapidly.
    Given a submodule $\Lambda\subset \cL$ which decays rapidly and a vector $w\in \cL$ such that $w\notin \Lambda$, we say that  \emph{the pair $(L,w)$ decays very rapidly to first order} if $w$ decays very rapidly to first order, and every primitive vector in $\Span_{\bZ_p}\{L,w\}$ decays rapidly to first order. 
\end{enumerate}
\end{defn}
The main theorem of this section is the following: 

\begin{theorem}[Decay Lemma in the superspecial case]\label{decayspecial}
There exists a saturated rank $2$ $\bZ_p$-submodule $\Lambda \subset \cL$ which decays rapidly. Moreover, at least one of the following statements holds: 
\begin{enumerate}
    \item there exists a primitive $w \in \Lambda$ which decays very rapidly; 
    
    \item there exists a primitive vector $w\in \cL$ such that $w \notin \Lambda$ and the pair $(L,w)$ decays very rapidly to first order. 
\end{enumerate}
\end{theorem}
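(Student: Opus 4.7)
The plan is to extend the Frobenius-expansion analysis of \S\ref{sec_decay_sg} to the superspecial case, where by \Cref{eq-nonord} we have $n=1$, the lattice $\bL_0$ has rank $2$, and the local equation of the non-ordinary locus is the quadric $Q(t)=-\sum_{j=1}^{m} x'_j(t)\,y'_j(t)$ rather than a single coordinate. I first introduce the auxiliary parameter $a=\min_j\{v_t(X'_j(t)),v_t(Y'_j(t))\}$; since $h\geq \min_j(v_t X'_j+v_t Y'_j)\geq 2a$, one has $a\leq h/2$, as required by the definition. Following \S\ref{dJ+Kisin}, for any $w\in\cL$, decay reduces to controlling the $p$-adic denominators of the horizontal section $F_\infty w=\prod_{i\geq 0}(I+F^{(i)})w$ computed in the basis $\{e_1,f_1,e'_j,f'_j\}$ from \S\ref{Frob_ef}.

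Next, I decompose $F$ in this basis as $F=K_1+K_C+K_D+B$, where $K_1=S'_0A_1(S'_0)^{-1}$ carries $Q(t)/p$ (of $t$-valuation $h$), $K_C$ is the top-right cross block carrying $X'_j(t)/p,Y'_j(t)/p$ (of $t$-valuation $\geq a$), $K_D$ is the bottom-left cross block carrying $-Y'_j(t),-X'_j(t)$ ($p$-adically integral), and $B$ is a $p$-integral remainder. Expanding $F_\infty$, each term of $p$-adic valuation $-r$ is a product of $r$ Frobenius twists drawn from $\{K_1,K_C,K_D\}$, since only these three blocks contribute a factor $1/p$. A structural analysis in the spirit of \Cref{lintopleft} should show that $K_C^{(i)}$ composes non-trivially on the right only with $K_D^{(j)}$ for $j\geq i+1$, producing a ``cross-term round trip'' of minimal $t$-adic valuation $a+ap^{k+1}$ for a twist offset $k\geq 0$, while the $K_1$-twists self-compose only with strictly increasing twists as in \Cref{lintopleft}(1). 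Classifying the minimal-$t$-valuation products at $p$-adic valuation $-r$ should then bound the leading term of $F_\infty w$ by $t$-valuation $p\,h'_{r-1}+ap^r$ up to lower order, so that $p^r w$ fails to extend modulo $t^{h'_r+1}$, giving rapid decay.

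A Moore-matrix argument analogous to \Cref{rowdotFp}--\Cref{lastgenericdecay}, applied to the row of $(S'_0)^{-1}$ that appears in the leading term, should then show that the ``bad subspace'' of $\cL\otimes k$ annihilating this row has codimension at least $2$, producing the desired saturated rank-$2$ submodule $\Lambda\subset\cL$ which decays rapidly. For the dichotomy, I will case-split based on whether the minimal cross-term round trip contribution (of valuation $a+a/p$) is actually realized inside $\Lambda$ or not. In the first case, a primitive $w\in\Lambda$ inherits this extra cross-term obstruction, so that $p^rw$ is blocked already at $t^{h'_{r-1}+ap^r+1}$; thus $w$ decays very rapidly, yielding case~(1). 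In the second case, the cross-term obstruction is carried by a vector $w\in\cL\setminus\Lambda$ arising from the $\bL_1$-component of $K_C$; its first-order lift is obstructed at $t^{[a+a/p]+1}$, and one verifies that every primitive vector in $\Span_{\bZ_p}\{\Lambda,w\}$ still satisfies the weaker first-order bound at $t^{[h+a/p]+1}$, yielding case~(2).

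The main obstacle will be the combinatorial bookkeeping once cross-term round trips are admitted alongside the pure $K_1$ paths: one must verify that no sequence involving multiple round trips or more exotic interleavings of $K_1$, $K_C$, $K_D$ produces strictly smaller $t$-adic valuation than the paths listed above, and that the Moore-matrix non-degeneracy still enforces the codimension-$2$ kernel bound in the presence of these additional path types. Adapting the arguments of \Cref{minval} and \Cref{lastgenericdecay} to this richer combinatorial structure, and then verifying that the two branches of the dichotomy exhaust all geometric configurations of $C$ relative to the quadric $\{Q=0\}$, is where the bulk of the technical work lies.
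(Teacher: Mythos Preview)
Your proposal is an outline rather than a proof, and several of the structural choices do not match what actually happens in the superspecial setting. The paper's argument differs from yours in two essential ways.

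First, the correct case split is \emph{not} governed by your single parameter $a$; one also needs $h'=v_t(R(t))$ where $R(t)=-\sum_j\big(X'_j(t)(Y'_j(t))^{p}+(X'_j(t))^{p}Y'_j(t)\big)$, i.e., the $t$-valuation of the composite $F_rF_l^{(1)}$ (your ``cross-term round trip''). The three regimes are $h<h'$, $h'(1+p^{2e-1})<h(1+p)<h'(1+p^{2e+1})$, and the boundary $h'(1+p^{2e-1})=h(1+p)$; these regimes determine \emph{which} products $P_{\alpha,\beta}=\prod_{i=1}^\alpha F_t^{(i)}\prod_{j=1}^\beta F_r^{(\alpha+2j-1)}F_l^{(\alpha+2j)}$ achieve the minimal $t$-valuation at $p$-adic valuation $-(r+1)$, and also determine whether outcome (1) or (2) of the theorem holds. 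Your dichotomy (``whether the cross-term contribution is realized inside $\Lambda$'') has no clear meaning and does not reproduce this trichotomy. In particular, the boundary case forces \emph{two} products of equal minimal $t$-valuation, and one must analyze whether the leading coefficients $\gamma,\delta\in W^\times$ satisfy $\gamma+\delta\in pW$ or $\gamma-\delta\in pW$; this is precisely where $\Lambda$ may be forced to be $\Span_{\bZ_p}\{e_1,e'_1\}$ or $\Span_{\bZ_p}\{f_1,e'_1\}$ rather than $\Span_{\bZ_p}\{e_1,f_1\}$, and where the ``very rapidly decaying'' vector lies in $\Lambda$ (giving case (1)).

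Second, the Moore-matrix argument from \S\ref{sec_decay_sg} does not transport here. With $n=1$ the module $\cL_0$ has rank $2$, and the relevant leading matrix is the explicit $2\times 2$ matrix $M=\tfrac12\begin{bmatrix}1&-\lambda\\ \lambda^{-1}&-1\end{bmatrix}$ or $N=\tfrac12\begin{bmatrix}1&\lambda\\ \lambda^{-1}&1\end{bmatrix}$ depending on the parity of $\alpha$. The fact that its $\bmod\, p$ kernel contains no nonzero $\bF_p$-rational vector is a direct consequence of $\lambda^2\bmod p$ being a non-residue, not a Moore-determinant computation; and in the boundary case the relevant row is a specific $W$-linear combination of a row of $M$ and a row of $N$, which is why the $\gamma\pm\delta$ analysis enters. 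Your proposed ``codimension $\geq 2$'' kernel argument therefore does not apply, and $\Lambda$ must be constructed case-by-case as above rather than extracted from a single bad subspace.
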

We expect that an analogous statement of \cite[Thm.~5.1.2]{MST} holds; more precisely, we expect that there is a rank $3$ submodule of $\cL$ which decays rapidly, and moreover, there exists a vector in this rank $3$ submodule which decays very rapidly. In order to prove \Cref{thm_int}, the weaker statement \Cref{decayspecial} suffices.

Theorem \ref{cor_decay_ssp} follows directly from Theorem \ref{decayspecial}:
\begin{proof}[Proof of Theorem \ref{cor_decay_ssp}]
The argument used to deduce Theorem \ref{cor_decay_sg} from Theorem \ref{genericdecay} works in this setting, with Theorem \ref{cor_decay_ssp}(1) following from Theorem \ref{decayspecial}(1), and Theorem \ref{cor_decay_ssp}(2) following from Theorem \ref{decayspecial}(2).
\end{proof}

\subsection*{The setup} Here we carry out all the computation for the split case described in \S\ref{embedding} and we will explain in the proof how to deduce the general case from the split case.

\begin{para}
Recall from \S\ref{Frob_vw} that  $\widehat{\cO}_{\Sh,P}=\Spf W[[ x_1,\dots, x_m,y_1,\dots,y_m]]$ (note that $x_j,y_j$ here were denoted by $x'_j,y'_j$ in \S\ref{Frob_vw}); the formal curve $C$ gives rise to the tautological map of local rings
\[
W[[x_1,\dots, x_m, y_1, \dots, y_m]] \rightarrow k[[t]]    
\]
and we let $x_i(t)$ (respectively $y_i(t)$) denote the images of the $x_i$ (respectively $y_i)$) in $k[[t]]$. For each of the $x_i(t)$ (respectively $y_i(t)$), let $X_i(t) \in W[[t]]$ (respectively $Y_i(t)\in W[[t]]$) denote the power series whose coefficients are the Teichmuller lifts of those of $x_i(t)$ (respectively $y_i(t)$). Let $\displaystyle Q(t) = -\sum_{i=1}^m X_i(t)Y_i(t)$ (compare to \S\ref{Frob_vw}, here we use $Q$ to denote the lift of itself), and let $\displaystyle R(t) = -\sum_{i=1}^m \Big(X_i(t)(Y_i(t))^{p} + (X_i(t))^{p}Y_i(t)\Big)$. By \Cref{eq-nonord}, $Q(t)\bmod p=0$ is the local equation for the non-ordinary locus, so  $h = v_t(Q(t))$. Let $h' = v_t(R(t))$. Without loss of generality, we may assume that $v_t(X_1(t))\leq v_t(X_i(t))$ and $v_t(X_1(t))\leq v_t(Y_i(t))$ as everything is symmetric in the $x_i,y_i$, and let $a$ denote $v_t(X_1(t))$. By definition, we have that $2a \leq h$ and $(p+1)a \leq h'$. 
\end{para}

\begin{para}\label{Frob_ef_ssp}
Following the notation of \Cref{L''_p0} for $n=1$, the vector $v_1$ in \Cref{cyclicfrob} must be $\frac{1}{2p}(e_1+f_1/\lambda)$ and $w_1=\frac{1}{2}(e_1-f_1/\lambda)$.\footnote{There is another possible choice with $\lambda$ replaced by $-\lambda$; given the computation is the same for both cases, we will just work with the first case.} 
By \S\S\ref{Frob_vw},\ref{Frob_ef}, we have that with respect to the basis $\{e_1,f_1,e'_i,f'_i\}_{i=1}^m$, the Frobenius on $\bbL_\cris(W[[x_i,y_i]])$ is given by 
\[
\Frob =(I +F)\circ \sigma, \text{ where } F=
\left[
\begin{array}{cc|cccccc}
\frac{Q}{2p}&\frac{-\lambda Q}{2p}&\frac{x_1}{2p}&\hdots&\frac{x_m}{2p}&\frac{y_1}{2p}&\hdots&\frac{y_m}{2p}\\
\frac{Q}{2p\lambda}&\frac{-Q}{2p}&\frac{x_1}{2p\lambda}&\hdots&\frac{x_m}{2p\lambda}&\frac{y_1}{2p\lambda}&\hdots&\frac{y_m}{2p\lambda}\\
\hline
-y_1&\lambda y_1&&&&&&\\
\vdots&\vdots&&&&&&\\
-y_m&\lambda y_m&&&&&&\\
-x_1&\lambda x_1&&&&&&\\
\vdots&\vdots&&&&&&\\
-x_m&\lambda x_m&&&&&&\\
\end{array}
\right].
\]
Let $F_t, F_r$ and $F_l$ denote the top-left $2\times2$ block, the top-right $2\times 2m$ block and the bottom-left $2m\times2$ block of $F$ respectively. Let $F_{r,i},F_{l,i}$ denote the $i\thh$ column of $F_r$ and $i\thh$ row of $F_l$ respectively.

As in \S\ref{dJ+Kisin}, in order to prove the Decay Lemma, we study the expansion of $\F=\prod_{i=0}^\infty(I+F^{(i)})$.
Let $\F(1)$, $\F(2)$ and $\F(1,3)$ denote the top-left $2 \times 2$ block, the top-right $2\times 2m$ block, and the $(m+1)\thh$ row of the bottom-left $2m\times2$ block of $\F$ respectively. We denote by $\F(2)_i$ the $i\thh$ column of $\F(2)$ and $\F(2,3)_i$ the $i\thh$ entry of the $(m+1)\thh$ row of the bottom-right $2m\times 2m$ block of $\F$.

Let
\[
M = \frac{1}{2}
\left[
\begin{array}{cc}
    1 & -\lambda \\
    \frac{1}{\lambda} &-1 
\end{array}
\right], 
N = \frac{1}{2}
\left[
\begin{array}{cc}
    1 & \lambda \\
    \frac{1}{\lambda} &1 
\end{array}
\right].
\]
\end{para}

\subsection*{Preliminary lemmas}
As in \cite[\S 5.2]{MST}, $\F$ is evaluated by considering sums of finite products of $\sigma$-twists of $F_t,F_r$ and $F_l$. The following lemma follows directly from the shape of $F$ and an elementary analysis on $t$-adic valuations.
\begin{lemma}\label{Finfspecial} Fix $r\in \bZ_{\geq 0}$
\begin{enumerate}
    \item Among all terms in the product expansion of $\F(1,3) $ with $p$-adic valuation $-(r+1)$, the terms which have the smallest $t$-adic valuation are contained in the set \[\bS_{1,r+1}=\{F_{l,m+1}\prod_{i=1}^\alpha F_t^{(i)} \prod_{j=1}^\beta F_r^{(\alpha+2j-1)}F_l^{(\alpha + 2j)}\mid \alpha,\beta\in \bZ_{\geq 0}, \alpha+\beta=r+1  \}.\]
    
    \item Among all terms in the product expansion of $\F(2,3)_s$ with $p$-adic valuation $-(r+1)$, the terms with the smallest $t$-adic valuation are contained in the set \[\bS_{2,s,r+1}=\{F_{l,m+1}\prod_{i=1}^\alpha F_t^{(i)} \prod_{j=1}^\beta F_r^{(\alpha+2j-1)}F_l^{(\alpha + 2j)} F_{r,s}^{(\alpha+2\beta +1)}\mid \alpha,\beta\in \bZ_{\geq 0}, \alpha+\beta=r\}.\]
\end{enumerate}
\end{lemma}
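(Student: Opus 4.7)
The strategy is to expand the infinite product $\F = \prod_{i \geq 0}(I + F^{(i)})$ and classify the resulting terms by their block-structural shape and their choice of Frobenius twists. Decompose the coordinate index set $\{1, \ldots, 2+2m\}$ into the ``top'' $T = \{1,2\}$ and the ``bottom'' $B = \{3, \ldots, 2+2m\}$; the block decomposition of $F$ into $F_t$ ($T$-to-$T$, $p$-valuation $-1$), $F_r$ ($T$-to-$B$, $p$-valuation $-1$), $F_l$ ($B$-to-$T$, $p$-valuation $0$), together with a zero $B$-to-$B$ block, shows that in any chain of matrix multiplications there is no $B \to B$ transition. Each term in the expansion is a product $F^{(i_1)} F^{(i_2)} \cdots F^{(i_k)}$ with $0 \leq i_1 < \cdots < i_k$, and its $(r,c)$-entry decomposes as a sum over paths through these blocks.

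First I would classify the admissible paths. For $\F(1,3)$ the path starts in $B$ (row $m+1$ of the bottom-left block) and ends in $T$, so the leftmost factor must be $F_{l,m+1}^{(i_1)}$. Since $B \to B$ is not allowed, the remaining factors partition into ``moves'' that are either a single $F_t^{(?)}$ (staying in $T$) or an adjacent pair $F_r^{(?)} F_l^{(?+1)}$ (excursion $T \to B \to T$). Let $\alpha$ and $\beta$ count these two types of moves. The $p$-adic valuation of the term equals $-(\alpha + \beta)$, so the constraint of $p$-valuation $-(r+1)$ forces $\alpha + \beta = r+1$. For $\F(2,3)_s$ the same analysis applies, except the path ends in $B$ (at column $s$ of the bottom-right block), so one additional final $T \to B$ transition is required, contributing a final $F_{r,s}^{(?)}$; counting $p$-valuations now gives $\alpha + \beta = r$.

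Next I would minimize the $t$-adic valuation. Since Frobenius twist $\sigma^i$ multiplies $t$-adic valuations by $p^i$, the minimum over twist-index choices is achieved by the smallest consecutive indices, namely $0, 1, \ldots, \alpha + 2\beta$ in case (1) (and one more slot in case (2)). It remains to determine the ordering of the moves among these slots. Using the explicit form of $F$, each single $F_t^{(i)}$ contributes $t$-valuation $p^i h$, while each pair $F_r^{(i)} F_l^{(i+1)}$ contributes at least $p^i a(1+p)$. An adjacent-swap comparison between $F_t^{(i)} F_r^{(i+1)} F_l^{(i+2)}$ (contribution $p^i h + p^{i+1} a(1+p)$) and its swap $F_r^{(i)} F_l^{(i+1)} F_t^{(i+2)}$ (contribution $p^i a(1+p) + p^{i+2} h$) yields a difference of $-(p^2-1) p^i (h - a)$, which is strictly negative under the standing assumption $h \geq 2a > a$. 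Iterating this elementary swap shows that in any $t$-val minimizer all $F_t$ moves appear before all $F_r F_l$ pairs, yielding exactly the shape of $\bS_{1,r+1}$ and, analogously, of $\bS_{2,s,r+1}$.

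I expect the main technical point to be the adjacent-swap calculation, but it simplifies to a clean expression controlled by $(p^2-1)(h-a)$, and the required inequality $h \geq 2a$ is immediate from the definition $Q(t) = -\sum X_i(t) Y_i(t)$ together with the minimality assumption $a = v_t(X_1(t)) \leq v_t(X_i(t)), v_t(Y_i(t))$ for all $i$. Hence no deeper input beyond elementary valuation bookkeeping is needed, and the lemma indeed ``follows directly from the shape of $F$ and an elementary analysis on $t$-adic valuations.''
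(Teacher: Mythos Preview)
Your block-structure analysis and the reduction to consecutive twist indices are correct, and this is indeed the heart of the argument. However, the swap calculation has a genuine gap.

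The quantity $p^i a(1+p)$ is only a \emph{lower bound} for the $t$-adic valuation of the pair $F_r^{(i)}F_l^{(i+1)}$; the exact value is $p^i h'$ where $h' = v_t(R(t))$ (this follows from the rank-one structure, cf.\ Lemma~\ref{evalproduct}). Your swap compares lower bounds on both sides, which says nothing about the actual ordering of the two terms. Redoing the swap with the exact values gives
\[
v_t\bigl(F_t^{(i)} F_r^{(i+1)} F_l^{(i+2)}\bigr) - v_t\bigl(F_r^{(i)} F_l^{(i+1)} F_t^{(i+2)}\bigr) \;=\; p^i(p-1)\bigl(h' - h(p+1)\bigr),
\]
whose sign depends on whether $h' \lessgtr h(p+1)$. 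The case $h' > h(p+1)$ genuinely occurs: for instance with $m=1$, $X_1 = t$, $Y_1 = \zeta t + t^M$ where $\zeta^{p-1} = -1$ and $M > p+2$, one computes $h = 2$ and $h' = M+p > 2(p+1)$. In this regime your swap goes the wrong way, and the ``all $F_t$ first'' ordering is \emph{not} minimal for that fixed $(\alpha,\beta)$.

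The fix is to allow the comparison element of $\bS_{1,r+1}$ to have a different $(\alpha',\beta')$. When $h' \le h(p+1)$, your swap (with $h'$ in place of $a(1+p)$) works and yields $v_t(T) \ge v_t(S_\alpha)$ for the same $\alpha$. When $h' > h(p+1)$, hence in particular $h' > h$, use instead the move that replaces an $F_r F_l$ pair by a single $F_t$ and shifts the tail down by one twist index; the change in $t$-valuation is
\[
p^i(h'-h) + (1-p^{-1})\cdot(\text{tail contribution}) \;>\; 0,
\]
so iterating drives any term down to $S_{r+1} \in \bS_{1,r+1}$. Either way one obtains $v_t(T) \ge \min_{\alpha'} v_t(S_{\alpha'})$, which is precisely the content of the lemma.
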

The following lemma follows from a direct computation similar to \cite[Lem.~5.2.1, Lem.~5.2.3]{MST}.
\begin{lemma}\label{evalproduct}
Consider the product $\displaystyle P_{\alpha,\beta} = \prod_{i=1}^\alpha F_t^{(i)} \cdot \prod_{j=1}^{\beta}F_r^{(\alpha + 2j-1)}F_l^{(\alpha + 2j)}$. 
\begin{enumerate}
    \item If $\alpha$ is odd, the product equals $p^{-(\alpha+\beta)}\prod_{i=1}^\alpha Q^{(i)}\cdot \prod_{j=1}^\beta R^{(\alpha+2j-1)} M^{(1)}$.
    
    \item If $\alpha$ is even, the product equals $p^{-(\alpha+\beta)}\prod_{i=1}^\alpha Q^{(i)}\cdot \prod_{j=1}^\beta R^{(\alpha+2j-1)} N^{(1)}$.
\end{enumerate}
In either case, the kernel of $F_{l,m+1}p^{\alpha+\beta}P_{\alpha+\beta}\bmod p$ does not contain 
 any non-zero $\bF_p$-rational vectors or any $k^\times$-multiple of
     $([1,\lambda^{-1}]^{(\alpha+1)})^T$.
\end{lemma}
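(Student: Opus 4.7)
The plan is to reduce the computation to the rank-one structure of the blocks $F_t, F_r, F_l$ together with a short list of identities among $M, N$ and their Frobenius twists. Reading off \S\ref{Frob_ef_ssp}, I would first record the rank-one factorizations $F_t = \frac{Q}{p}M$, $F_r = \frac{1}{2p}\eta\vec{v}^{\,T}$, $F_l = -\vec{u}\,\xi^T$, where $\eta = (1, 1/\lambda)^T$, $\xi = (1, -\lambda)^T$, $\vec{v} = (x_1, \ldots, x_m, y_1, \ldots, y_m)$, and $\vec{u} = (y_1, \ldots, y_m, x_1, \ldots, x_m)^T$. By inspection $M = \tfrac{1}{2}\eta\xi^T$ and $N = \tfrac{1}{2}\eta\,\sigma(\xi)^T$. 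Using $\sigma(\lambda) = -\lambda$ (which holds since $\lambda^2 \bmod p$ is a nonresidue) and the inner products $\xi^T\eta = 0$ and $\sigma(\xi)^T\eta = 2$, a short calculation yields the identities $M^2 = 0$, $N^2 = N$, $NM = M$, $MM^{(1)} = N$, $\sigma(M)M = N^{(1)}$, and $M^{(1)}N = M^{(1)}$.

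For the two-step building block, the outer-product form collapses the product to
\[
F_r^{(a)} F_l^{(a+1)} = -\frac{\sigma^a(\vec{v})^T \sigma^{a+1}(\vec{u})}{2p}\,\sigma^a(\eta)\,\sigma^{a+1}(\xi)^T = \frac{R^{(a)}}{p}\,N^{(a)},
\]
since $\sigma^a(\vec{v})^T\sigma^{a+1}(\vec{u}) = -R^{(a)}$ by definition of $R$ and, by Step 1, $\sigma^a(\eta)\sigma^{a+1}(\xi)^T = 2N^{(a)}$. Because $\alpha + 2j - 1$ has parity opposite to $\alpha$ for every $j$, the factors $N^{(\alpha+2j-1)}$ in $P_{\alpha,\beta}$ all coincide, and their product collapses to $N$ (if $\alpha$ is odd) or $N^{(1)}$ (if $\alpha$ is even) by idempotence. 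In parallel, I would evaluate $\prod_{i=1}^\alpha M^{(i)}$ by induction on $\alpha$: the identities $M^{(1)}M^{(2)} = \sigma(M)M = N^{(1)}$ and $N^{(1)}M^{(3)} = \sigma(NM) = M^{(1)}$, together with $\sigma^2 = \mathrm{id}$ on $\bZ_{p^2}$, give $M^{(1)}$ for $\alpha$ odd and $N^{(1)}$ for $\alpha$ even. Multiplying the two sides and using the absorptions $M^{(1)}N = M^{(1)}$ and $N^{(1)}\cdot N^{(1)} = N^{(1)}$ yields the claimed formulas (1) and (2).

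For the kernel statement, the $(m+1)$st row of $F_l$ is $F_{l,m+1} = -x_1\xi^T$, so
\[
F_{l,m+1}\,p^{\alpha+\beta}P_{\alpha,\beta} = -x_1\prod_{i=1}^\alpha Q^{(i)}\prod_{j=1}^\beta R^{(\alpha+2j-1)}\cdot\xi^T\Theta,
\]
with $\Theta = M^{(1)}$ or $N^{(1)}$ according to the parity of $\alpha$. The identity $\xi^T\sigma(\eta) = 2$ gives $\xi^T M^{(1)} = \sigma(\xi)^T = (1,\lambda)$ and $\xi^T N^{(1)} = (1,-\lambda)$, so the row vector is a nonzero element of $k[[t]]$ (the scalar prefactor is nonzero because $X_1(t), Q(t), R(t)$ are all nonzero power series) times $(1,\,(-1)^\alpha \lambda)$. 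Its kernel in $k^2$ is spanned by $((-1)^\alpha\lambda,\,-1)^T$, which is not $\bF_p$-rational since $\lambda\notin \bF_p$ and is not a $k^\times$-multiple of $([1, \lambda^{-1}]^{(\alpha+1)})^T = (1,\,(-1)^{\alpha+1}/\lambda)^T$, because equating the ratios would force $1 = -1$, impossible for $p \ge 5$.

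The main obstacle is not conceptual but notational: one must keep disciplined track of the various Frobenius twists, especially that $\sigma$ acts on $\lambda$ by $-1$ and has order two on $\bZ_{p^2}$, and of which twist of $M$ or $N$ appears at each intermediate step. Once the rank-one factorizations of $F_t, F_r, F_l$ and the short dictionary of $M/N$ identities are extracted, every remaining manipulation is essentially a single matrix multiplication.
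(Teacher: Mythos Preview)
Your proof is correct and follows the same direct-computation approach that the paper invokes (the paper simply writes ``follows from a direct computation similar to \cite[Lem.~5.2.1, Lem.~5.2.3]{MST}''); your rank-one factorizations of $F_t,F_r,F_l$ and the resulting $M/N$ calculus are exactly how that computation is carried out.

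Two small points. First, you use $\sigma(\lambda)=-\lambda$; this holds provided one chooses $\lambda$ with $\lambda^2\in\bZ_p$ (which one may do, and which is implicit in the lemma for the formulas involving $M^{(1)},N^{(1)}$ to be exact rather than only congruences mod $p$). It is worth stating this choice explicitly. Second, there is a sign slip in your summary formula: from $\xi^T M^{(1)}=(1,\lambda)$ (for $\alpha$ odd) and $\xi^T N^{(1)}=(1,-\lambda)$ (for $\alpha$ even) the row is $(1,(-1)^{\alpha+1}\lambda)$, not $(1,(-1)^{\alpha}\lambda)$. With your sign the ``equating ratios'' step would in fact give an identity rather than a contradiction; with the corrected sign the kernel vector $((-1)^{\alpha+1}\lambda,-1)^T$ compared against $(1,(-1)^{\alpha+1}\lambda^{-1})^T$ yields $(-1)^{\alpha+1}\lambda=(-1)^{\alpha}\lambda$, i.e.\ $1=-1$, which is the contradiction you announce. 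Your case-by-case computations before this summary are correct, so this is a transcription error rather than a gap.
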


\subsection*{Decay in the superspecial case}

\begin{proof}[Proof of Theorem \ref{decayspecial}]We first prove the theorem in the split case. We continue the argument in \S\ref{dJ+Kisin}.
\par{\noindent \bf Case 1: $h < h'$.} 
It follows from Lemmas \ref{Finfspecial} and \ref{evalproduct} that there is a unique element of $\bS_{1,r+1}$ (respectively $\bS_{2,1,r+1}$) with minimal $t$-adic valuation. This term is $F_{l,m+1}P_{r+1,0}$ (respectively $F_{l,m+1}P_{k,0} F_{r,1}^{(r+1)}$), and has $t$-adic valuation $a+h(p+\cdots+ p^{r+1})$ (respectively $a+h(p+\cdots + p^{r}) + ap^{r+1}$). 
By the last assertion of \Cref{evalproduct} and the fact that $\gamma_0 e_1+\delta_0 f_1+\sum_{i=1}^m \gamma_i e'_i +f'_1+\sum_{i=2}^m \delta_i f'_i $ is primitive in $\bL=\bbL_{\cris,P}(W)$ for any $\gamma_i, \delta_i \in W$, we conclude that for any primitive vector $w\in \Span_{\bZ_p}\{e_1,f_1\}$, the horizontal section $p^r\F w$ is not integral in $\bbL_\cris(D_s)$ if $s>(a+h(p+\cdots+ p^{r+1}))/p$; thus $\Span_W\{e_1,f_1\}$ decays rapidly.
Similarly by \Cref{evalproduct}, the special endomorphism $e'_1$ decays very rapidly. The fact that $\Span_{\bZ_p}\{e_1,f_1,e'_1\}$ decays rapidly follows from an argument identical to that outlined in the last paragraph of \cite[Proof of Prop.~5.1.3 Case 1 in \S 5.2]{MST}. For the convenience of the reader, we give a brief sketch of this argument. Let $w$ be a primitive vector in $\Span_{\bZ_p}\{e_1,f_1\}$. As in \emph{loc. cit.}, it suffices to prove that the $t$-adic valuation of the term of $\F w$ with denominator $p^{r_1}$ is different from the $t$-adic valuation of the term of $\F e'_1$ with denominator $p^{r_2}$ for any $r_1,r_2 \in \bZ_{>0}$. The former quantity equals $a+h(p + \cdots+ p^{r_1})$, and the latter quantity equals $a+h(p + \hdots p^{r_2-1}) + ap^{r_2}$. As $1\leq a \leq h/2$, it follows that these quantities can never be the same and the result follows.

Note that in this case, we have proved that a rank $3$ submodule of $\cL$ must decay.\\

\par{\noindent \bf Case 2: $h'(1+p^{2e-1}) < h(1+p) < h'(1+p^{2e+1})$, for some $e\in \bZ_{\geq 1}$.}
As in Case 1, by \Cref{Finfspecial}, $F_{l,m+1}P_{r-e+1,e}$ is the unique element of $\bS_{1,r+1}$ with minimal $t$-adic valuation (the argument is similar to that of \cite[Lem.~5.2.6]{MST}); moreover $v_t(F_{l,m+1}P_{r-e+1,e})<v_t(F_{l,m+1}P_{r+1,0})<p(h'_r+1)$. Thus, by \Cref{evalproduct} and the same argument as in Case 1, $\Span_{\bZ_p}\{e_1,f_1\}$ decays rapidly.
On the other hand, $|S_{2,1,1}|=1$ and the unique element has $t$-adic valuation $a+pa$ and thus $e'_1$ decays very rapidly to first order. The fact that the pair $(\Span_{\bZ_p}\{e_1,f_1\},e'_1)$ decays very rapidly to first order follows from an argument identical to the one outlined at the end of Case 1. \\

\par{\noindent \bf Case 3: $h'(1+p^{2e-1}) = h(1+p)$ for some $e\in \bZ_{\geq 1}$.}
In this case, by \Cref{Finfspecial} and a computation similar to \cite[Lem.~5.2.7]{MST}, we have that $F_{l,m+1}P_{r-e+1,e-1} \cdot F_{r,1}^{(r+e)}$ is a unique element of $\bS_{2,1,r+1}$ with the smallest $t$-adic valuation and \[v_t(F_{l,m+1}P_{r-e+1,e-1} \cdot F_{r,1}^{(r-e)})\leq v_t(F_{l,m+1}P_{r,0} \cdot F_{r,1}^{(r+1)})<p(h'_{r-1}+ap^r+1).\] Then the last assertion of Lemma \ref{evalproduct} implies that $e'_1$ decays very rapidly. 

\begin{claim}
At least one of $e_1,f_1$ decays rapidly.
\end{claim}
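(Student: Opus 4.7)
The plan is to extract, at each $p$-adic level $-(r+1)$, the sum of the terms in $\F(1,3)$ of minimal $t$-adic valuation, and to show that at least one of $e_1, f_1$ is not annihilated by this sum modulo $p$ \emph{uniformly} in $r$. Once we have such a vector, the argument of Case 1 (the Kisin--de Jong horizontal-section criterion together with \Cref{Finfspecial}) immediately yields rapid decay.

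First I would show that the minimum of $v(\alpha) := v_t(F_{l,m+1} P_{\alpha, r+1-\alpha})$ is attained at exactly the two consecutive indices $\alpha = r-e+1$ and $\alpha = r-e+2$ whenever $r \geq e-1$. A direct computation (analogous to \cite[Lem.~5.2.6]{MST}) gives
\[v(\alpha+1) - v(\alpha) \;=\; \frac{p^{\alpha+1}}{p+1}\bigl[h(1+p) - h'(1 + p^{2(r+1-\alpha)-1})\bigr],\]
which under the Case 3 identity $h(1+p) = h'(1+p^{2e-1})$ vanishes precisely when $r+1-\alpha = e$, and is strictly of one sign otherwise. The case $r<e-1$ is easier: the minimum is attained at $\alpha=0$ alone, and the unique dominant term is a unit multiple of $F_{l,m+1} N^{(1)}$, whose mod-$p$ kernel contains no $\bF_p$-rational vector by \Cref{evalproduct}, so both $e_1$ and $f_1$ fail to lift at such $r$ and the analysis of Case 1 applies.

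For $r \geq e-1$, the two tying indices differ by one, so by \Cref{evalproduct} the leading contribution is a unit multiple of $F_{l,m+1}(c_1 M^{(1)} + c_2 N^{(1)})$ with $c_1, c_2 \in W^{\times}$. Exploiting that $M$ and $N$ share their first column and differ only in the sign of the second, a short computation yields
\[F_{l,m+1}(c_1 M^{(1)} + c_2 N^{(1)})\, e_1 \;=\; u_1(c_1 + c_2), \qquad F_{l,m+1}(c_1 M^{(1)} + c_2 N^{(1)})\, f_1 \;=\; u_2(c_1 - c_2),\]
where $u_1, u_2 \in W^{\times}$ are explicit units involving only the leading coefficient of $X_1(t)$ and powers of $\lambda$ (units because $\lambda \bmod p \notin \bF_p$). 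So at level $r$: $e_1$ fails iff $c_1 + c_2 \equiv 0 \pmod p$, and $f_1$ fails iff $c_1 \equiv c_2 \pmod p$.

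The main obstacle is that $c_1, c_2$ depend on $r$ through Frobenius twists, so a priori the failing vector could depend on $r$. Writing $q_0, r_0 \in W^{\times}$ for the Teichm\"uller lifts of the leading $t$-coefficients of $Q(t), R(t)$, a careful bookkeeping of the exponents of $p$ in the products $\prod Q^{(i)} \prod R^{(\cdot)}$ produced by \Cref{evalproduct} yields
\[\frac{c_2}{c_1} \;=\; \bigl(q_0\, r_0^{\delta_e}\bigr)^{p^{r-e+2}}, \qquad \delta_e \;:=\; (p+p^3+\cdots+p^{2e-3}) \;-\; (1+p^2+\cdots+p^{2e-2}).\]
Hence the mod-$p$ reduction satisfies $c_2/c_1 \equiv \sigma^{r-e+2}(\eta_e) \pmod{p}$ for the $r$-independent element $\eta_e := \overline{q_0\, r_0^{\delta_e}} \in k^{\times}$. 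Since the Frobenius $\sigma$ on $k = \overline{\bF}_p$ fixes both $1$ and $-1$, the equality $\sigma^{r-e+2}(\eta_e) = 1$ forces $\eta_e = 1$, and similarly for $-1$. Therefore if $\eta_e \neq -1$ then $c_1+c_2 \not\equiv 0 \pmod p$ for every $r \geq e-1$, whence $e_1$ decays rapidly; if $\eta_e = -1$ (so $\eta_e \neq 1$ as $p$ is odd) then $c_1 \not\equiv c_2 \pmod p$ for every $r \geq e-1$, whence $f_1$ decays rapidly. In either case at least one of $e_1, f_1$ decays rapidly, proving the claim. The non-split forms of $\bL_1$ are reduced to the split case via the embedding described in \S\ref{embedding}, exactly as in Cases 1 and 2.
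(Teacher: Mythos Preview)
Your approach is essentially the paper's: both identify the two tying terms $P_{r-e+1,e}$ and $P_{r-e+2,e-1}$ in $\bS_{1,r+1}$, express their sum via \Cref{evalproduct} as $F_{l,m+1}$ times a combination of $M^{(1)}$ and $N^{(1)}$, observe that the condition for $e_1$ (resp.\ $f_1$) to escape the kernel mod $p$ is that the sum (resp.\ difference) of the two scalar coefficients be a unit, and then use that Frobenius preserves this to obtain uniformity in $r$. One minor oversight: your formula $c_2/c_1 = \sigma^{r-e+2}(\eta_e)$ is only valid when $r-e$ is even; for $r-e$ odd the parities of $\alpha=r-e+1$ and $\alpha=r-e+2$ swap, hence $c_1,c_2$ swap and the ratio inverts---but since the conditions $c_2/c_1\equiv \pm 1$ are invariant under inversion, your conclusion is unaffected. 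The paper sidesteps this by fixing $\gamma,\delta$ once at $r=e-1$, writing the general-$r$ coefficients as $\sigma^{r-e+1}(\gamma),\sigma^{r-e+1}(\delta)$, and checking directly that the $M^{(1)}\leftrightarrow N^{(1)}$ swap leaves the kernel criterion on $[1,0]^T$ and $[0,1]^T$ unchanged.
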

\begin{proof}
When $r<e-1$, there is a unique element of $\bS_{1,r+1}$ with minimal $t$-adic valuation, and thus the argument as in Case 1 shows that for any primitive vector $w\in \Span_{\bZ_p}\{e_1,f_1\}$, we have that $p^rw$ does not lift to an endomorphism $\bmod t^{h'_r+1}$.

When $r\geq e-1$, there are exactly \emph{two} distinct elements of $\bS_{1,r+1}$ with minimal $t$-adic valuation, and they are $P_{r-e+1,e}$ and $P_{r-e+2,e-1}$. We first prove that at least one of $p^{e-1}e_1, p^{e-1}f_1$ does not extend to  an endomorphism modulo $t^{h'_{e-1}+1}$. Indeed, by \Cref{evalproduct}, we have that $F_{l,m+1}(P_{0,e} + P_{1,e-1}))$ equals $p^{-e}(AF_{l,m+1}M^{(1)} + BF_{l,m+1}N^{(1)})$, where $A =\prod_{i=1}^e R^{ (2i-1)} , B =  Q^{(1)} \cdot\prod_{i=1}^{(e-1)} R^{(2i)}$. Let $\gamma , \delta\in W^{\times}$) denote the leading coefficients of $A, B\in W[[t]]$. As $p>2$, we have that at most one of  $\gamma - \delta$ and $\gamma +\delta$ lie in $pW$. Suppose that $\gamma + \delta\notin pW$, then $[1,0]^T$ does not lie in $\ker (\gamma [-1,\lambda]N^{(1)}+\delta [-1, \lambda]M^{(1)})$ and thus the $e'_1$-coordinate of the horizontal section $\tilde{e}_1$, up to a scalar multiple in $W^\times$, is $p^{-e}t^{a+h'(p+p^3+\cdots+p^{2e-1})}+$ other powers of $t$. Therefore, $p^{e-1}e_1$ does not lift to $\bmod t^{h'_{e-1}+1}$ because $h'_{e-1}+1>h(p^{e-1}+p^{e-2}+\cdots 1)+a/p \geq (a+h'(p+p^3+\cdots+p^{2e-1}))/p$. On the other hand, if $\gamma-\delta \notin pW$, then $[0,1]^T$ does not lie in $\ker \gamma [-1,\lambda]M^{(1)}+\delta [-1, \lambda]N^{(1)}$ and the same argument as above implies that $p^{e-1}f_1$ does not lift to $\bmod t^{h'_{e-1}+1}$.

Now we show that if $\gamma+\delta \notin pW$, then $e_1$ decays rapidly. A similar computation as above shows that for $r\geq e$, we have that $F_{l,m+1}(P_{r-e+1,e}+P_{r-e+2,e-1})=p^{-(r+1)}X_1\prod_{i=1}^{r-e+1} Q^{(i)}(A^{(r-e+1)}M^{(1)}+B^{(r-e+1)}N^{(1)})$ when $r-e$ is even and $F_{l,m+1}(P_{r-e+1,e}+P_{r-e+2,e-1})=p^{-(r+1)}X_1\prod_{i=1}^{r-e+1} Q^{(i)}(A^{(r-e+1)}N^{(1)}+B^{(r-e+1)}M^{(1)})$ when $r-e$ is odd. Since $\gamma+\delta \notin pW$, then $\gamma^{(r-e+1)}+\delta^{(r-e+1)}\notin pW$. Thus $[1,0]^T$ does not lie in $\ker (\gamma^{(r-e+1)} [-1,\lambda]N^{(1)}+\delta^{(r-e+1)} [-1, \lambda]M^{(1)})$ and $\ker (\gamma^{(r-e+1)} [-1,\lambda]M^{(1)}+\delta^{(r-e+1)} [-1, \lambda]N^{(1)})$. Therefore, we conclude that $e_1$ decays rapidly by a direct computation of the $t$-adic valuation of $F_{l,m+1}P_{r-e+1,e}$ as the $r=e-1$ case.

If $\gamma-\delta\notin pW$, then an identical argument as above implies that $f_1$ decays rapidly.
\end{proof}
To finish the proof of Case 3, we notice that an argument identical to the one outlined at the end of Case 1 goes through to show that if $e_1$ (resp. $f_1$) decays rapidly, then $\Span_{\bZ_p}\{e_1,e'_1\}$ (resp. $\Span_{\bZ_p}\{f_1,e'_1\}$) decays rapidly. \\

In general, if $\cL_1$ is not split in the sense of \S\ref{embedding}, we may embed it into a split one. More precisely, the even dimensional nonsplit case can be recovered as the subspace $\Span_{\bZ_p}\{e_1,f_1, e'_i+f'_i, e'_j+\lambda^2 f'_j, e'_l,f'_l \}_{1\leq l\leq m, l\neq i,j}$ for some $i\neq j$; the odd dimensional case can be recovered as the spaces $\Span_{\bZ_p}\{e_1,f_1, e'_i+f'_i,  e'_l,f'_l \}_{1\leq l\leq m, l\neq i}$ or $\Span_{\bZ_p}\{e_1,f_1, e'_i+\lambda^2 f'_i,  e'_l,f'_l \}_{1\leq l\leq m, l\neq i}$ for some $i$. The above argument for the split case proves the decay lemma for those subspaces such that $i,j\neq 1$. Moreover, if $i=1$ or $j=1$, the argument above for $\Span_{\bZ_p}\{e_1,f_1\}$ is still valid; on the other hand, once we replace $e'_1$ by $e'_1+f'_1$ or $e'_1+\lambda^2 f'_1$ and work with $\bS_{2,1,r+1}$ and $\bS_{2,m+1,r+1}$, the minimal $t$-adic valuation does not change by the last assertion of \Cref{evalproduct} and thus the above proof for the split case indeed proves the Decay Lemma in general.
\end{proof}


\section{Proof of \Cref{thm_int}}\label{sec_pf}
In this section, we prove \Cref{thm_int}. As sketched in the introduction, our approach is to combine global bounds from Borcherds theory with bounds on the average local intersection multiplicities.  At supersingular points, these are obtained using section \ref{sec_heuristic} (\Cref{cor_decay_sg} and \Cref{cor_decay_ssp}).

Note that \Cref{thm_int} is independent of the choice of level structure of $\Sh$ and is equivalent for different $\Sh$ with the same quadratic space $(L\otimes \bQ,Q)$ over $\bQ$; thus without loss of generality, we may assume that $L\subset V=L\otimes \bQ$ is maximal among all lattices over which $Q$ is $\bZ$-valued. Moreover, since all special divisors $\cZ(m)_{\bQ}$ are GSpin Shimura varieties, we may always work with the smallest GSpin Shimura variety whose reduction contains $C$ and thus we may assume that $C$ is not contained in any special divisors $Z(m)$.

\subsection*{The global intersection number and its decomposition}

\begin{para}\label{setS}
Let $S\subset \bZ_{>0}$ be a set of positive density (i.e., $\lim_{X\rightarrow \infty}\frac{1}{X}|\{m\in S \mid m\leq X\}|$ exists and $>0$) and we also assume that each $m\in S$ is representable by $(L,Q)$ and for any $m\in S$, we have $p\nmid m$. By the theory of quadratic forms, such $S$ exists.\footnote{Indeed, by \cite[Lem.~4.7]{SSTT}, every $m\gg 1$ is representable since $L$ is maximal.}
For $X\in \bZ_{>0}$, we use $S_X$ to denote $\{m\in S\mid X\leq m \leq 2X\}$.
\end{para}

\begin{para}\label{def_Eis}
We use vector-valued modular forms to control the asymptotic of $C.Z(m)$ as $m\rightarrow \infty$. Let $L^\vee$ denote the dual of $L$ in $V$ with respect to the bilinear form $[-,-]$ induced by $Q$ and let $\{\fe_{\mu}\}_{\mu\in L^\vee/L}$ denote the standard basis of $\bC[L^\vee/L]$. Let $\rho_L$ denote the Weil representation on $\bC[L^\vee/L]$ of the metaplectic group $\Mp_2(\bZ)$. As in \cite[\S 4.1.4]{MST}, we consider the Eisenstein series $E_0(\tau), \tau \in \bH$ defined by $\displaystyle E_0(\tau)=\sum_{(g,\sigma)\in \Gamma'_\infty\backslash \Mp_2(\bZ)}\sigma(\tau)^{-(2+n)}(\rho_L(g,\sigma)^{-1}\fe_0)$, where $\Gamma'_\infty\subset \Mp_2(\bZ)$ is the stabilizer of $\infty$.
Note that the constant term of $E_0$ is $\fe_0$ and $E_0(\tau)$ is a weight $1+\frac{b}{2}$ modular form with respect to $\rho_L$.
\end{para}

The following theorem of Bruinier--Kuss \cite{BK01} gives explicit formulae of the Fourier coefficients of $E_0$. As we are using different convention of the signature of $(V,Q)$ as in \cite{BK01}, we refer the reader to the formulae in \cite[Thms.~2.3, 2.4]{Br17}. 

\begin{para}\label{not_lat}
We first introduce some notation for an arbitrary quadratic lattice $(L,Q)$ over $\bZ$. We write $\det(L)$ for the determinant of its Gram matrix. For a rational prime $\ell$, we use $\delta(\ell, L,m)$ to denote the local density of $L$ representing $m$ over $\bZ_\ell$. More precisely, $\delta(\ell, L,m)=\lim_{a\rightarrow \infty} \ell^{a(1-\rk L)}\#\{v\in L/\ell^a L \mid Q(v)\equiv m \bmod \ell^a\}$. If $m$ is representable by $(L\otimes \bZ_\ell, Q)$, then $\delta(\ell,L,m)>0$; moreover, when $\rk L\geq 5$ (this is the case for our application), by for instance \cite[pp.~198-199]{Iwa97}, for a fixed $\ell$, we have that $\delta(\ell, L,m)\asymp 1$ for all $m$ representable by $(L\otimes \bZ_\ell, Q)$.

Given $0\neq D\in \bZ$ such that $D\equiv 0,1 \bmod 4$, we use $\chi_D$ to denote the Dirichlet character 
$\chi_D(a)=\left(\frac{D}{a}\right)$, where $\left(\frac{\cdot}{\cdot}\right)$ is the Kronecker symbol. For a Dirichlet character $\chi$, we set $\sigma_s(m,\chi)=\sum_{d|m}\chi(d)d^s$. 
\end{para}

\begin{theorem}[{\cite[Thm.~11]{BK01}}] \label{Eis-cof_L}
Recall that $(L,Q)$ is a quadratic lattice over $\bZ$ of signature $(b,2)$ with $b\geq 3$. Let $q_L(m)$ denote the coefficient of $q^m\fe_0$ in the $q$-expansion of $E_0$.
\begin{enumerate}
\item For $b$ even, the Fourier coefficient $q_L(m)$ is 
\[-\frac{2^{1+b/2}\pi^{1+b/2}m^{b/2}\sigma_{-b/2}(m,\chi_{(-1)^{1+b/2}4\det L})}{\sqrt{|L^\vee/L|}\Gamma(1+b/2)L(1+b/2,\chi_{(-1)^{1+b/2}4\det L})}\prod_{\ell \mid 2\det(L)}\delta(\ell, L, m).\]

\item For $b$ odd, write $m=m_0f^2$, where $\gcd(f,2\det L)=1$ and $v_\ell(m_0)\in \{0,1\}$ for all $\ell\nmid 2\det L$. Then the Fourier coefficient $q_L(m)$ is
\[-\frac{2^{1+b/2}\pi^{1+b/2}m^{b/2}L((b+1)/2,\chi_{\cD})}{\Gamma(1+b/2)\sqrt{|L^\vee/L|}\zeta(b+1)}\left(\sum_{d\mid f}\mu(d)\chi_{\cD}(d)d^{-(b+1)/2}\sigma_{-b}(f/d)\right)\prod_{\ell\mid 2\det L}\Big(\delta(\ell,L,m)/(1-\ell^{-1-b})\Big),\]
where $\mu$ is the Mobius function and $\cD=(-1)^{(b-1)/2}2m_0\det L$.
\end{enumerate}
In particular, $|q_L(m)|\asymp m^{b/2}$ for all $m$ representable by $(L,Q)$.
\end{theorem}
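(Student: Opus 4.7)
The plan is to follow the classical recipe for Fourier coefficients of Eisenstein series attached to Weil representations, which goes back to Siegel and is systematized adelically as a local Whittaker computation. I would first rewrite the defining sum in adelic language: $E_0$ corresponds to a spherical (up to the metaplectic cover) induced section from the Borel of $\Mp_2$, and the $m$-th Fourier coefficient of its $\fe_0$-component unfolds via the Bruhat decomposition into a product $q_L(m) = c_\infty(m) \cdot \prod_\ell c_\ell(m)$ of local Whittaker integrals. At the archimedean place, the section is the standard holomorphic vector of weight $k=1+b/2$, so $c_\infty(m) = -\frac{(2\pi)^k}{\Gamma(k)} m^{k-1}$ by the classical confluent hypergeometric integral; this produces the prefactor $2^{1+b/2}\pi^{1+b/2} m^{b/2}/\Gamma(1+b/2)$. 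At each finite place, the local Whittaker function can be identified, up to a normalizing factor depending only on the local discriminant, with the local representation density $\delta(\ell, L, m)$.

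Next I would separate the bad primes $\ell \mid 2\det L$, where the densities are left in the form $\delta(\ell, L, m)$ and yield the finite product $\prod_{\ell\mid 2\det L}\delta(\ell,L,m)$ appearing in the stated formula, from the good primes $\ell \nmid 2\det L$, where $L\otimes\bZ_\ell$ is self-dual and Siegel's recursion for $\delta(\ell, L, m)$ can be solved in closed form. For $b$ even, the quadratic space of $L$ has discriminant $D = (-1)^{1+b/2}4\det L$ with associated quadratic character $\chi_D$, and I expect the good-prime Euler factors to telescope into
\begin{align*}
\prod_{\ell \nmid 2\det L} c_\ell(m) \;=\; \frac{\sigma_{-b/2}(m,\chi_D)}{L(1+b/2,\chi_D)},
\end{align*}
after reorganizing local corrections. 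For $b$ odd, the relevant Dirichlet character depends on the square-free part $m_0$ of $m$ (writing $m=m_0 f^2$ with $\gcd(f,2\det L)=1$), because the discriminant of the rank-$(b+2)$ quadratic form representing $m$ at a good prime picks up a factor of $m_0$; the good-prime factor should then become a geometric series in $\ell$ involving $\chi_\cD$ and the exponent $v_\ell(f)$. Summing these over $\ell\mid f$ should produce the Möbius–divisor sum $\sum_{d\mid f}\mu(d)\chi_\cD(d)d^{-(b+1)/2}\sigma_{-b}(f/d)$, while the remaining factor should collapse to $L((b+1)/2,\chi_\cD)/\zeta(b+1)$ after normalization.

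The final asymptotic $|q_L(m)| \asymp m^{b/2}$ will then follow from the archimedean factor $m^{b/2}$ together with the elementary bounds $\sigma_{-b/2}(m,\chi_D)\asymp 1$, $L(1+b/2,\chi_D)\asymp 1$, $L((b+1)/2,\chi_\cD)\asymp 1$ uniformly in $m$ (using the Euler product bound $(1+p^{-s})^{-1}\leq(1-\chi(p)p^{-s})^{-1}\leq (1-p^{-s})^{-1}$ for $s\geq 3/2$), and $\delta(\ell,L,m)\asymp 1$ for $m$ representable by $L\otimes\bZ_\ell$ (as recalled in \S\ref{not_lat}). The hard part will be the odd case: combining the Weil-index phases of the metaplectic cover at each finite place with the Gauss-sum evaluation of the good local Whittaker integral, and then reorganizing the resulting Dirichlet series into the clean closed form involving $L((b+1)/2,\chi_\cD)/\zeta(b+1)$ together with the Möbius–divisor sum. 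The bookkeeping of the character $\chi_\cD$, which depends on $m_0$ rather than on $\det L$ alone, is the source of this extra complexity, and is what forces the case split into a clean even-$b$ formula versus the $f$-divisor sum in the odd-$b$ formula.
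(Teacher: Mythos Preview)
The paper does not actually prove the explicit formulae in parts (1) and (2); it simply cites Bruinier--Kuss \cite[Thm.~11]{BK01} (and \cite[Thms.~2.3, 2.4]{Br17} for the sign convention), and then remarks that the asymptotic $|q_L(m)|\asymp m^{b/2}$ is ``a direct consequence of the above explicit formulae and the fact $\delta(\ell,L,m)\asymp 1$.'' Your proposal goes much further: you sketch the adelic Whittaker derivation that underlies the cited result itself. That outline is correct in broad strokes---unfolding, the archimedean confluent-hypergeometric integral, closed-form local densities at good primes collapsing to the $L$-function and divisor-sum factors, and raw densities at bad primes---and is indeed essentially how Bruinier--Kuss proceed (they compute Kloosterman-type sums attached to the Weil representation rather than phrasing it adelically, but the content is the same). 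For the only part the paper addresses, namely the asymptotic, your argument matches theirs: the factor $m^{b/2}$ comes from the archimedean contribution, and all remaining factors are $\asymp 1$ by the Euler-product bounds you list together with the local-density bound recalled in \S\ref{not_lat}.
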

Here the last assertion is a direct consequence of the above explicit formulae and the fact $\delta(\ell,L,m)\asymp 1$ (see also \cite[\S 4.3.1]{MST}).

Recall that $C\rightarrow \Sh_k$ is a smooth proper curve such that the generic point of $C$ maps into the ordinary locus of $\Sh_k$.
\begin{lemma}\label{asymp_glo}
Let $\omega$ denote the tautological line bundle on $\Sh_k$ corresponding to $\Fil^1 V\subset V$ (i.e., $\omega$ is the line bundle of modular forms on $\Sh_k$ of weight $1$). Then
the intersection number $Z(m).C= |q_L(m)|(\omega . C)+O(m^{(b+2)/4})$. In particular, $\sum_{m\in S_X} Z(m).C\asymp (\omega .C) \sum_{m\in S_X} |q_L(m)|\asymp X^{1+b/2}$ for $S_X$ defined in \S\ref{setS}.
\end{lemma}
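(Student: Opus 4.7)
The plan is to realize $m \mapsto C.Z(m)$ as essentially the Fourier coefficients of a vector-valued modular form, isolate the Eisenstein part (which gives the main term $|q_L(m)|(\omega.C)$) and bound the cuspidal part by the Hecke trivial bound (which gives the error term $O(m^{(b+2)/4})$).

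First, I would form the generating series
\[
\Phi_C(\tau) \;=\; -(\omega.C)\,\fe_0 \;+\; \sum_{\mu\in L^\vee/L}\sum_{\substack{m\in \bQ_{>0} \\ m\equiv Q(\mu)\bmod \bZ}} (C.\cZ(m,\mu))\, q^m\,\fe_\mu,
\]
where $\cZ(m,\mu)$ denotes the component of the special divisor labelled by $(m,\mu)\in \bQ_{>0}\times L^\vee/L$. Because $L$ is self-dual at $p$ and $p\nmid \det(L)$, the nontrivial cosets contribute nothing modulo $p$, so the $\fe_0$-component of $\Phi_C$ agrees with $-(\omega.C) + \sum_m (C.Z(m))q^m$. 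By Borcherds' modularity theorem for the generating series of special divisors on orthogonal Shimura varieties (and its extension to the integral model, for which one can cite Bruinier--Howard--Madapusi Pera), the series $\Phi_C$ is a modular form of weight $1+b/2$ for the Weil representation $\rho_L$. The constant term is controlled by the tautological bundle $\omega$, which accounts for the $-(\omega.C)\fe_0$ summand; here one uses that on the Baily--Borel compactification $\omega$ is ample, so in particular $(\omega.C)>0$ for any proper curve $C$ mapping finitely to $\Sh_k$.

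Next, I would decompose $\Phi_C$ uniquely as an Eisenstein part plus a cusp form,
\[
\Phi_C \;=\; \alpha\cdot E_0 \;+\; f,
\]
where $E_0$ is the Eisenstein series of \S\ref{def_Eis} and $f$ is a cusp form of weight $1+b/2$ for $\rho_L$. Comparing constant terms (the constant term of $E_0$ is $\fe_0$) gives $\alpha = -(\omega.C)$. Reading off the $m$-th coefficient of the $\fe_0$-component and using that $q_L(m)<0$ by the formulas in \Cref{Eis-cof_L}, we get
\[
C.Z(m) \;=\; -(\omega.C)\,q_L(m) \;+\; a_m(f) \;=\; |q_L(m)|(\omega.C) \;+\; a_m(f).
\]
The Hecke trivial bound for a cusp form of weight $k=1+b/2$ gives $|a_m(f)|=O(m^{k/2})=O(m^{(b+2)/4})$, proving the pointwise bound.

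For the averaged statement, \Cref{Eis-cof_L} yields $|q_L(m)|\asymp m^{b/2}$ uniformly for $m$ representable by $(L,Q)$. Since every $m\in S$ is representable and $S_X$ has positive density in $[X,2X]$, summing gives
\[
\sum_{m\in S_X} |q_L(m)| \;\asymp\; X\cdot X^{b/2} \;=\; X^{1+b/2},
\]
while the cuspidal contribution is at most $\sum_{m\in S_X} O(m^{(b+2)/4}) = O(X^{1+(b+2)/4}) = O(X^{(b+6)/4})$, which is strictly smaller than $X^{1+b/2}$ since $b\geq 3$. Combined with $(\omega.C)>0$, this yields the claimed $\asymp X^{1+b/2}$ asymptotic.

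The main obstacle is invoking the correct form of Borcherds modularity for the generating series of the $\cZ(m)$ on the integral model (together with the right normalization by $\omega$), so that $\Phi_C$ is genuinely a $\rho_L$-valued modular form of weight $1+b/2$; once this is in place, the extraction of the Eisenstein part and the Hecke bound on the cuspidal part are routine.
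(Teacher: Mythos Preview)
Your proposal is correct and follows essentially the same approach as the paper: invoke Borcherds/Howard--Madapusi Pera modularity to make the generating series a weight $1+b/2$ form for $\rho_L$, split off the Eisenstein part $-(\omega.C)E_0$ via the constant term, and bound the cuspidal remainder by the trivial Hecke bound $O(m^{(b+2)/4})$. The only quibble is your sentence about self-duality at $p$ forcing the nontrivial cosets to ``contribute nothing modulo $p$''---this is unnecessary and slightly misleading, since you only need the $\fe_0$-component by the very definition of $Z(m)$, but it does not affect the argument.
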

\begin{proof}
 By the modularity theorem of Borcherds \cite{Bor99} or its arithmetic version by Howard and Madapusi Pera \cite{HMP}, we have that $-(\omega.C)+\sum_{m=1}^\infty Z(m).C$ is the $\fe_0$-component of a vector-valued modular form with respect to $\rho_L$ of weight $(1+b/2)$ and its Eisenstein part is given by the $\fe_0$-component of $-(\omega.C)E_0$ (see \cite[Thm.~4.1.1, \S 4.1.4]{MST}). The difference of $-(\omega.C)+\sum_{m=1}^\infty Z(m).C$ and the $\fe_0$-component of $-(\omega.C)E_0$ is a cusp form, and thus the first assertion follows from the trivial bound on Fourier coefficients of cusp forms (see \cite[Prop.~1.3.5]{Sar90}). We then obtain the last assertion by \Cref{Eis-cof_L}.
\end{proof}

In order to compare $C.Z(m)$ with the local intersection number $i_P(C.Z(m))$ for a point $P\in (C\cap Z(m))(k)$, we introduce the notion of \emph{global intersection number} $g_P(m)$ as follows.

\begin{defn}[{\cite[Def.~7.1.3]{MST}}]\label{def_gP}
Let $H$ denote the Hasse-invariant on $\Sh_k$ (i.e., $H=0$ cuts out the non-ordinary locus).
Let $t$ be the local coordinate at $P$ (i.e., $\widehat{C}_{P}=\Spf k[[t]]$) and let $h_P=v_t(H)$, the $t$-adic valuation of $H$ restricted to $\widehat{C}_P$. We define $g_P(m)=\frac{h_P}{p-1}|q_L(m)|$. In particular, $g_P(m)=0$ for $P$ ordinary and $\sum_{P\in (C\cap Z(m))(k)}g_P(m)=|q_L(m)|(\omega.C)$ since $H$ is a section of $\omega^{p-1}$.
\end{defn}

\subsection*{Local intersection number: preparation and non-supersingular points}

We first introduce some notation and reformulate the calculation of local intersection number as a lattice counting problem. 

\begin{para}\label{lattice}
Recall that $P\in (C\cap Z(m))(k)$ for some $m$. Let $\cA/k[[t]]$ denote the pullback of the universal abelian scheme $\cA^\univ$ via $\Spf k[[t]]=\widehat{C}_P\rightarrow \Sh_k$. Let $L_n$ denote the $\bZ$-lattice of special endomorphisms of $\cA \bmod t^n$. By definition, $L_{n+1} \subset L_n$ for every $n\geq 1$, and our assumption that $C$ is not contained in any special divisor yields that $\cap_n L_n = \{ 0\}$. By \cite[Rmk.~7.2.2]{MST}, all $L_n$ have the same rank. Moreover, by \cite[Lem.~4.2.4]{HP}, $P$ is supersingular if and only if $\rk_{\bZ}L_1=b+2$. Since the weight of $\varphi$ on $\bbL_{\cris,P}(W)$ is $0$, then the slope non-zero part of $\bbL_{\cris,P}(W)$ cannot have rank $b+1$; thus if $P$ is not supersingular, then $\rk_{\bZ} L_1\leq b$. 

On the other hand, by \Cref{posdef}, we have a positive definite quadratic form $Q$ on $L_n$ given by $v\circ v =[Q(v)]$ for $v\in L_n$.
By the moduli-theoretic description of the special divisors and the fact that $C$ intersects $Z(m)$ properly (due to the assumption that the image of $C$ does not lie in any $Z(m)$), we have

\begin{equation}\label{localexpm}
    i_P(C.Z(m)) = \sum_{n=1}^{\infty} \#\{v \in L_n\mid Q(v) = m  \}.  
\end{equation}
Note that although for a fixed $m$, the set $\{v\in L_n\mid Q(v)=m\}$ is empty for $n\gg 1$, but this bound on $n$ is in general dependent on $m$. In the work of Chai and Oort \cite{CO06}, 
they use the canonical product structure in the setting $\Sh = \cA_1\times\cA_1$ and work with a sequence of divisors for which the local contributions from any one fixed point is \emph{absolutely bounded}, independent of the special divisor.

By \Cref{asymp_glo}, there exists an absolute constant $c_1$ (which depends only on the curve $C$) such that 
\begin{equation}\label{heightbound}
i_P(C.Z(m)) \leq (C .Z(m)) <  c_1 m^{b/2}.
\end{equation}
\end{para}

We now recall the definitions of the successive minima of the $L_n$ from \cite{Esk}.
\begin{defn}
\begin{enumerate}
    \item  For $i\in\{1,\cdots,r=\rk_{\bZ}L_n\}$, the successive minima $\mu_i(n)$ of $L_n$ is defined as $\displaystyle \inf\{y\in \bR_{>0}\mid\exists v_1,\cdots, v_i\in L_n \text{ linearly independent, and } Q(v_j)\leq y^2, 1\leq j\leq i\}$.

    \item For $n\in \bZ_{\geq 1}, 1\leq i \leq r$, define $a_i(n)=\prod_{j=1}^i \mu_j(n)$; define $a_0(n)=1$.
\end{enumerate}
\end{defn}
The determinant of a quadratic lattice (which is approximately the product of all the successive minima) gives first order control on the number of lattice points with bounded norm -- however, the error term does depend on the lattice in question. In our setting, we must count lattice points of bounded norm in an infinite family of lattices, and so considering the determinants alone doesn't allow us sufficient control across this family of lattices. Indeed, in the example of a formal curve constructed in \S\ref{formal_example}, the error terms involved can get very large, even on average. As seen in \cite{Esk}, the data of each individual successive-minima controls the error term in a way that is uniform across all lattices of a fixed rank, and hence we keep track of this more refined data in our setting of a nested family of lattices. 

We have the following result establishing lower bounds for the $a_i(n)$, which is similar to \cite[Lem.~7.6]{SSTT}.
\begin{lemma}\label{firstmin}
We have that $a_i(n) \gg n^{i/b}$ for $1 \leq i \leq \rk_{\bZ} L_n$.
\end{lemma}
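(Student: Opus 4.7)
The plan is to reduce the lemma to a lower bound on the first successive minimum $\mu_1(n)$. Since the successive minima satisfy $\mu_1(n)\le\mu_2(n)\le\cdots\le\mu_r(n)$ with $r=\rk_{\bZ}L_n$, one has $a_i(n)=\prod_{j=1}^{i}\mu_j(n)\ge\mu_1(n)^i$, so it suffices to prove that $\mu_1(n)\gg n^{1/b}$.

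To establish this bound on $\mu_1(n)$, I will pick any nonzero $v\in L_n$ and exploit the moduli-theoretic identity $i_P(C.Z(m))=\sum_{n'\ge 1}\#\{w\in L_{n'}\mid Q(w)=m\}$ recorded in \S\ref{lattice}. Because the lattices are nested, with $L_n\subset L_{n-1}\subset\cdots\subset L_1$, the single vector $v$ lies in each of $L_1,L_2,\dots,L_n$, and is therefore counted once in each of the first $n$ summands of the double sum at $m=Q(v)$. Consequently $n\le i_P(C.Z(Q(v)))$.

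I then invoke the global bound $i_P(C.Z(m))\le (C.Z(m))<c_1 m^{b/2}$ recalled in \Cref{heightbound}, which itself comes from the Borcherds-theoretic estimate in \Cref{asymp_glo}. Combining this with the inequality from the previous paragraph gives $n<c_1 Q(v)^{b/2}$, i.e.\ $Q(v)\gg n^{2/b}$, for every nonzero $v\in L_n$. Taking $v$ to realize the first successive minimum yields $\mu_1(n)^2\gg n^{2/b}$, whence $\mu_1(n)\gg n^{1/b}$, and the lemma follows.

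There is no substantial obstacle: the argument is a direct translation of the global upper bound on intersection numbers into a lower bound on the shortest vector of $L_n$. The only point that warrants care is the elementary bookkeeping observation that a single $v\in L_n\setminus\{0\}$ already forces $i_P(C.Z(Q(v)))\ge n$, which is exactly where the nestedness of the chain $(L_{n'})_{n'\ge 1}$ gets used.
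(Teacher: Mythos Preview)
Your proof is correct and essentially identical to the paper's own argument: both reduce to bounding $\mu_1(n)=a_1(n)$ by picking a shortest nonzero vector $v\in L_n$, using the nestedness $L_n\subset L_{n-1}\subset\cdots\subset L_1$ together with \eqref{localexpm} to get $i_P(C.Z(Q(v)))\ge n$, applying the global bound \eqref{heightbound} to obtain $a_1(n)\gg n^{1/b}$, and then concluding via $a_i(n)\ge a_1(n)^i$.
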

\begin{proof}
Let $0\neq v \in L_n$ be a vector that minimizes the quantity $Q(v)$, and thus $a_1(n) = Q(v)^{1/2}$. Note that $v\in L_n$ implies $v \in L_i$ for every $i\leq n$. Take $m = Q(v)$. Eqn.~\eqref{localexpm} yields that $i_P(C\cdot Z(m)) \geq n$, and then by Eqn.~\eqref{heightbound}, $ n < c_1m^{b/2}$. As $a_1(n)^2 = m$, it follows that $ c_1a_1(n)^b > n$, whence $a_1(n) \gg n^{1/b}$. The bounds for the other $a_i(n)$ follow from the observation that $\mu_i(n) \geq a_1(n)$, and hence $a_i(n) \geq a_1(n)^i$.
\end{proof}

\begin{corollary}\label{truncate}
For $S_X$ defined in \S\ref{setS}, there exists a constant $c_2$ depending only on $C$ such that
\[ 
\sum_{m \in S_X} i_P(C. Z(m)) = \sum_{n=1}^{c_2X^{b/2}} \sum_{m\in S_X} \#\{v \in L_n\mid Q(v) = m\}.
\]
\end{corollary}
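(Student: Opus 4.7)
The plan is to swap the order of summation in Eqn.~\eqref{localexpm} and then use \Cref{firstmin} to truncate. After the swap, we obtain
\[
\sum_{m \in S_X} i_P(C.Z(m)) \;=\; \sum_{n=1}^{\infty} \sum_{m\in S_X} \#\{v \in L_n \mid Q(v)=m\},
\]
so the content of the statement is that the inner sum vanishes once $n > c_2 X^{b/2}$ for an appropriate constant $c_2$ depending only on $C$.

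To see this, I would argue as follows. If the inner sum is nonzero for some $n$, then there exists $v \in L_n$ with $Q(v) = m$ for some $m \in S_X$; in particular $Q(v) \leq 2X$. By the definition of the first successive minimum, this forces $\mu_1(n)^2 = a_1(n)^2 \leq 2X$. On the other hand, \Cref{firstmin} gives the lower bound $a_1(n) \gg n^{1/b}$, where the implicit constant depends only on the constant $c_1$ from Eqn.~\eqref{heightbound}, hence only on $C$. Combining these two inequalities yields $n^{2/b} \ll 2X$, i.e., $n \leq c_2 X^{b/2}$ for some constant $c_2$ depending only on $C$.

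This step is essentially a bookkeeping argument and I do not expect any obstacle; the substance has already been absorbed into \Cref{firstmin}, which in turn relies on the global bound Eqn.~\eqref{heightbound} coming from \Cref{asymp_glo}. Thus the statement reduces to an immediate application of \Cref{firstmin} after interchanging the order of summation.
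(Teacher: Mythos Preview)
Your proposal is correct and follows essentially the same approach as the paper: both arguments invoke \Cref{firstmin} to conclude that for $n > c_2 X^{b/2}$ one has $a_1(n) > (2X)^{1/2}$, so that no nonzero $v \in L_n$ satisfies $Q(v) \leq 2X$, and then appeal to Eqn.~\eqref{localexpm}. The paper's proof is slightly terser but the content is identical.
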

\begin{proof}
\Cref{firstmin} implies that there exists a constant $c_2$ only depending on $C$ such that for $n > c_2 X^{b/2}$, we have $a_1(n) > (2X)^{1/2} $. In other words, $\min_{0\neq v\in L_n}Q(v)> 2X$. Then the corollary follows from Eqn.~\eqref{localexpm}.
\end{proof}

We are now ready to bound the local intersection number $i_P(C .Z(m))$ on average over $m$ for $P$ not supersingular, which is the analogue of \cite[Prop.~7.7]{SSTT}.

\begin{proposition}\label{nonssbound}
For $P$ not supersingular, we have that
\[
\sum_{m=1}^{2X} i_P(C. Z(m)) = O(X^{b/2} \log X),
\]
where the implicit constant only depends on $C$.
In particular, $\displaystyle\sum_{m\in S_X} i_P(C.Z(m))=O(X^{b/2} \log X)$.
\end{proposition}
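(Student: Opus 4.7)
The plan is to reduce the sum to a lattice-point counting problem, then apply a standard geometry-of-numbers estimate together with the fact that $\rk_{\bZ} L_n \leq b$ for all $n$ when $P$ is not supersingular (as recorded in \S\ref{lattice}).

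First, I would argue as in \Cref{truncate} that any nonzero $v \in L_n$ with $Q(v) \leq 2X$ satisfies $Q(v)^{1/2} \geq a_1(n)$, so such a $v$ can exist only when $a_1(n) \leq \sqrt{2X}$; by \Cref{firstmin} this forces $n \leq c_2' X^{b/2}$ for an absolute constant $c_2'$ depending only on $C$. Exchanging the order of summation via \eqref{localexpm} gives
\[
\sum_{m=1}^{2X} i_P(C.Z(m)) = \sum_{n=1}^{c_2' X^{b/2}} \#\{v \in L_n : 1 \leq Q(v) \leq 2X\}.
\]

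Next I would invoke the classical geometry-of-numbers estimate: for any positive definite $\bZ$-lattice $L$ of rank $r$ with successive minima $\mu_1 \leq \cdots \leq \mu_r$, and any $R > 0$,
\[
\#\{v \in L : Q(v)^{1/2} \leq R\} \ll_r \sum_{i=0}^{r} \frac{R^i}{\mu_1 \cdots \mu_i}.
\]
Applying this to $L_n$ with $R = \sqrt{2X}$, and using the crucial inequality $r = \rk_{\bZ} L_n \leq b$ from \S\ref{lattice}, one obtains
\[
\sum_{m=1}^{2X} i_P(C.Z(m)) \ll \sum_{n=1}^{c_2' X^{b/2}} \sum_{i=1}^{b} \frac{X^{i/2}}{a_i(n)}.
\]
Now \Cref{firstmin} gives $a_i(n) \gg n^{i/b}$. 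For each $1 \leq i < b$, this yields $\sum_{n \leq c_2'X^{b/2}} X^{i/2} n^{-i/b} \ll X^{i/2} \cdot (X^{b/2})^{1 - i/b} = X^{b/2}$, while for $i = b$ it yields $\sum_{n \leq c_2' X^{b/2}} X^{b/2}/n \ll X^{b/2} \log X$. Summing over $i$ produces the desired bound $O(X^{b/2}\log X)$, and the ``in particular'' statement is immediate since $S_X \subset \{1,\dots,2X\}$.

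The main obstacle is, in a sense, not present here: the estimate works cleanly precisely because $\rk L_n \leq b$, so the dominant $i = b$ term is only a logarithm worse than the global bound $|q_L(m)| \asymp m^{b/2}$. This is exactly why the supersingular case treated in the next subsection is the hard part: there $\rk L_n$ can be as large as $b+2$, so the naive lattice point estimate produces a top term comparable in size to, or larger than, the global intersection, and one must instead exploit the refined growth of the covolumes $|L_1/L_n|$ supplied by the Decay Lemmas of \S\S\ref{sec_decay_sg}--\ref{sec_decay_ssp} to compensate.
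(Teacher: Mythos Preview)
Your proposal is correct and follows essentially the same approach as the paper: truncate the sum in $n$ via \Cref{firstmin}/\Cref{truncate}, apply the geometry-of-numbers lattice-point estimate (the paper cites \cite[Lem.~2.4, Eqns (5)(6)]{Esk}), use $\rk_{\bZ}L_n\leq b$ from \S\ref{lattice}, and then bound each term $\sum_n X^{i/2}/a_i(n)$ using $a_i(n)\gg n^{i/b}$. The only cosmetic difference is that the paper keeps the $i=0$ term (which contributes $O(X^{b/2})$ and is absorbed), whereas you drop it from the outset since you count only nonzero vectors.
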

\begin{proof}
By \S\ref{lattice}, we have that $r:=\rk_{\bZ}L_n \leq b$.
 By \cite[Lem.~2.4, Eqns (5)(6)]{Esk} and \Cref{firstmin}, we have 
$$ \sum_{n=1}^{c_2X^{b/2}} \sum_{m = 1}^{2X} \#\{v \in L_n\mid Q(v) = m\} \ll  \sum_{n=1}^{c_2 X^{b/2}}\sum_{i=0}^r \frac{(2X)^{i/2}}{a_i(n)} \ll \sum_{n=1}^{c_2 X^{b/2}} \sum_{i=1}^r \frac{(2X)^{i/2}}{n^{i/b}},$$
where the implicit constant in the first inequality is absolute and the implicit constant in the second inequality only depends on $C$.
For any $1\leq i < b$, we see that 
$$\sum_{n=1}^{c_2 X^{b/2}}  \frac{(2X)^{i/2}}{n^{i/b}} = (2X)^{i/2} \sum_{n=1}^{c_2 X^{b/2}}\frac{1}{n^{i/b}} = O(X^{b/2}),$$
as required.
If $i = b$, the identical calculation yields a bound of $O(X^{b/2}\log X)$. The result then follows directly by \Cref{truncate}.
\end{proof}

\subsection*{Local intersection number at supersingular points}
\begin{para}
For a supersingular point $P\in C$, we break the local intersection number into two parts for a fixed $T\in \bZ_{>0}$ to be chosen later as follows: $\displaystyle \sum_{m \in S_X} i_P(C.Z(m))=i_P(X,T)_{\err}+i_P(X,T)_{\mt}$, where  \[i_P(X,T)_{\err}=\sum_{n=T}^{c_2X^{b/2}} \sum_{m\in S_X} \#\{v \in L_n\mid  Q(v) = m\},\quad i_P(X,T)_{\mt}=\sum_{n=1}^{T-1} \sum_{m\in S_X} \#\{v \in L_n\mid  Q(v) = m\}\] 
and the equality holds due to \Cref{truncate}.
\end{para}

We first bound the error term $i_P(X,T)_{\err}$.

\begin{proposition}\label{sserrorbound}
There exists an absolute constant $c_3>0$ (independent of $X,T$) such that \[i_P(X,T)_{\err} \leq \frac{c_3}{{T^{2/b}}}X^{\frac{b+2}2} + O(X^{(b+1)/2}).\]
\end{proposition}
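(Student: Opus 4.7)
The plan is to apply an Eskin-style lattice point count on each $L_n$ uniformly, and then sum over $n$ using the lower bounds on successive minima from \Cref{firstmin}. Since $P$ is supersingular, we have $r := \rk_\bZ L_n = b+2$ by the discussion in \S\ref{lattice}. Bounding $\#\{v \in L_n \mid Q(v) = m\}$ for $m \in S_X$ by the full count of vectors with $Q(v) \leq 2X$, the estimate $\#\{v \in L_n : Q(v) \leq Y\} \ll \sum_{i=0}^{r} Y^{i/2}/a_i(n)$ from \cite[Lem.~2.4]{Esk} (used already in the proof of \Cref{nonssbound}) gives, with an absolute implicit constant,
\[
i_P(X,T)_{\err} \;\ll\; \sum_{n=T}^{c_2 X^{b/2}} \sum_{i=0}^{b+2} \frac{X^{i/2}}{a_i(n)}.
\]

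Next I would substitute the uniform lower bound $a_i(n) \gg n^{i/b}$ from \Cref{firstmin} (with $a_0(n) = 1$) and evaluate the $n$-sums $\sum_{n=T}^{c_2 X^{b/2}} n^{-i/b}$ term by term. For $0 \leq i \leq b-1$, the sum is $O(X^{b/2 - i/2})$, contributing $O(X^{b/2})$ after multiplication by $X^{i/2}$. For $i = b$, one picks up a logarithm, yielding $O(X^{b/2}\log X)$. For $i = b+1$ the sum $\sum_{n \geq T} n^{-(b+1)/b}$ converges and is $O(T^{-1/b})$, giving $O(T^{-1/b} X^{(b+1)/2})$. For $i = b+2$ the sum is $O(T^{-2/b})$, giving $O(T^{-2/b} X^{(b+2)/2})$, which is the main term. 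Since $T \geq 1$ and $X^{b/2}\log X = O(X^{(b+1)/2})$, all the subordinate terms combine into the advertised $O(X^{(b+1)/2})$ remainder, producing the stated inequality.

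The main obstacle, such as it is, is conceptual rather than computational: one has to confirm that the implicit constant in the Eskin lattice-point bound depends only on the rank $r = b+2$ and is otherwise independent of the lattice $L_n$ itself, so that summing over the infinite family $\{L_n\}$ does not smuggle in hidden $n$-dependence. This is exactly the content of \cite[Lem.~2.4]{Esk}, and paired with the uniform successive-minima estimate of \Cref{firstmin} it decouples the bound from $n$ entirely. The only remaining sanity check is that the terms $i \leq b+1$ truly are dominated by the error $O(X^{(b+1)/2})$ when $T$ is small, which is automatic because $T^{-1/b} \leq 1$ for $T \geq 1$.
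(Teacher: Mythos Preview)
Your proposal is correct and follows essentially the same approach as the paper: both apply the Eskin lattice-point bound uniformly in $n$, substitute $a_i(n)\gg n^{i/b}$ from \Cref{firstmin}, and evaluate the resulting $n$-sums term by term, isolating the $i=b+2$ contribution as the main term $c_3 T^{-2/b} X^{(b+2)/2}$ and absorbing the rest into $O(X^{(b+1)/2})$. Your case-by-case breakdown is slightly more detailed than the paper's (which lumps $1\le i\le b+1$ into a single $O(X^{(b+1)/2})$ statement), and you correctly obtain the exponent $T^{-2/b}$ in the main term.
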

\begin{proof}
As in the proof of  \Cref{nonssbound}, we have 
$$i_P(X,T)_{\err}\leq \sum_{n=T}^{c_2X^{b/2}} \sum_{m = 1}^{2X} \#\{v \in L_n\mid Q(v) = m\} =\sum_{n=T}^{c_2 X^{b/2}} \sum_{i=0}^{b+2} \frac{(2X)^{i/2}}{n^{i/b}}.$$
As in the proof of \Cref{nonssbound}, by \Cref{firstmin}, we have that $\displaystyle \sum_{n=T}^{c_2X^{b/2}}\frac{(2X)^{i/2}}{n^{i/b}}=O(X^{\frac{b+1}2})$ for all
$1\leq i\leq b+1$. For $i=b+2$, we have $\displaystyle \sum_{n=T}^{c_2 X^{b/2}}\frac{(2X)^{\frac{b+2}{2}}}{n^{(b+2)/b}}< \sum_{n=T}^{\infty}\frac{(2X)^{\frac{b+2}{2}}}{n^{(b+2)/b}}\leq \frac{c_3}{T^{2/(b+2)}}X^{\frac{b+2}{2}}$ for some absolute constant $c_3>0$ by a direct computation.
\end{proof}

In order to bound $i_P(X,T)_{\mt}$, we study the theta series attached to (certain lattices containing) $L'_n$.

\begin{para}\label{sec_lat}
Let $L'_n\subset L_n\otimes \bQ$ be a $\bZ$-lattice such that $L'_n\supset L_n$, $L'_n$ is maximal at all primes $\ell\neq p$, and $L'_n\otimes \bZ_p=L_n\otimes \bZ_p$; we may choose $L'_n\subset L'_1$ and we will assume this for the rest of this section; the quadratic form $Q$ also endows a positive definite quadratic form on $L'_n$. Let $\theta_n$ denote the theta series attached to $L'_n$ and we write its $q$-expansion as $\theta_n(q)=\sum_{m=0}^\infty r_n(m)q^m$. By definition, $r_n(m)\geq \#\{v\in L_n \mid Q(v)=m\}$ and hence $i_P(X,T)_{\mt}\leq \sum_{n=1}^T\sum_{m\in S_X} r_n(m)$.

The theta series $\theta_n$ is a weight $1+b/2$ modular form and we decompose $\theta_n(q)=E_{L'_n}(q)+G_n(q)$, where $E_{L'_n}$ is an Eisenstein series and $G_n$ is a cusp form. Let $q_{L'_n}(m)$ and $g(m)$ denote the $m$-th Fourier coefficients of $E_{L'_n}$ and $\sum_{n=1}^T G_n$ respectively. By \cite[Prop.~1.3.5]{Sar90}, we have $g(m)=O_T(m^{(b+2)/4})$ and thus 
\[i_P(X,T)_{\mt}\leq \sum_{n=1}^T\sum_{m\in S_X} q_{L'_n}(m)+\sum_{m\in S_X} g(m)=\sum_{n=1}^T\sum_{m\in S_X} q_{L'_n}(m)+O_T(X^{1+(b+2)/4}).\]
\end{para}

The following theorem gives explicit formulae of $q_{L'_n}(m)$.

\begin{theorem}[Siegel mass formula]\label{Ecoeff}
Given any $L'\subset L'_1$ sublattice such that $L'\otimes \bZ_\ell=L'_1\otimes \bZ_\ell$ for all $\ell\neq p$, let $q_{L'}(m)$ be the $m$-th Fourier coefficient of the Eisenstein part of the theta series attached to $L'$. 
\begin{enumerate}

\item For $b$ even, 
\[q_{L'}(m)=\frac{2^{1+b/2}\pi^{1+b/2}m^{b/2}\sigma_{-b/2}(m,\chi_{(-1)^{1+b/2}4\det L'_1})}{\sqrt{|L'^\vee/L'|}\Gamma(1+b/2)L(1+b/2,\chi_{(-1)^{1+b/2}4\det L'_1})}\prod_{\ell \mid 2 det L'_1}\delta(\ell, L',m).\]

\item For $b$ odd, $q_{L'}(m)$ equals
\[\frac{2^{1+b/2}\pi^{1+b/2}m^{b/2}L((b+1)/2,\chi_{\cD'})}{\Gamma(1+b/2)\sqrt{|L'^\vee/L'|}\zeta(b+1)}\left(\sum_{d\mid f}\mu(d)\chi_{\cD}(d)d^{-(b+1)/2}\sigma_{-b}(f/d)\right)\prod_{\ell\mid 2\det L'_1}\Big(\delta(\ell,L',m)/(1-\ell^{-1-b})\Big),\]
where we write $m=m_0f^2$, where $\gcd(f,2\det L'_1)=1$ and $v_\ell(m_0)\in \{0,1\}$ for all $\ell\nmid 2\det L'_1$,  $\mu$ is the Mobius function, and $\cD'=(-1)^{(b-1)/2}2m_0\det L'_1$.

\end{enumerate}
\end{theorem}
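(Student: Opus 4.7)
The plan is to derive the explicit formulas as a direct consequence of the classical Siegel mass formula, in the explicit form computed by Bruinier--Kuss \cite{BK01} and already recorded above as \Cref{Eis-cof_L}. First, I would observe that $L'$ is a positive definite $\bZ$-lattice of rank $b+2$, so its theta series $\theta_{L'}(\tau)=\sum_{v\in L'}q^{Q(v)}$ is a scalar-valued modular form of weight $(b+2)/2=1+b/2$ (of an appropriate level and character); via the splitting of the Weil representation of $L'$ and projection to the trivial coset, its Eisenstein part is the $\fe_0$-component of the vector-valued Eisenstein series of weight $1+b/2$ attached to $L'$. Applying \cite[Thm.~11]{BK01} (equivalently \Cref{Eis-cof_L}) directly to $L'$ in place of $L$ yields the desired formulas in~(1) and~(2), up to two cosmetic discrepancies: the overall sign (positive here, negative in \Cref{Eis-cof_L}) and the appearance of $\det L'_1$ rather than $\det L'$ in the Dirichlet characters.

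The sign discrepancy is automatic from the signature convention: for $L'$ positive definite the theta series has constant term~$1$ and non-negative Fourier coefficients, while the Eisenstein series $E_0$ of \S\ref{def_Eis} is attached to a lattice of signature $(b,2)$ of opposite total sign, producing the negative sign in \Cref{Eis-cof_L}. To handle the character discrepancy, I would use the hypothesis that $L'\subset L'_1$ with $L'\otimes \bZ_\ell=L'_1\otimes \bZ_\ell$ for every $\ell\neq p$, which forces $L'_1/L'$ to be a finite $p$-group. Writing $|L'_1/L'|=p^k$ gives
\[
\det L' \;=\; p^{2k}\,\det L'_1.
\]
Since $p^{2k}$ is a square in $\bQ^\times$ and both determinants are positive, the Kronecker symbols $\chi_{D\det L'}$ and $\chi_{D\det L'_1}$ agree as Dirichlet characters for any integer $D$ with $D\equiv 0,1\pmod 4$, which justifies replacing $\det L'$ by $\det L'_1$ throughout the character. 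In addition, the analysis of \S\ref{par_decomp_cL} shows that $p\mid \det L_1$ at a supersingular point, so $p\mid \det L'_1$ as well, and hence $2\det L'$ and $2\det L'_1$ share the same set of prime divisors; this ensures that the decomposition $m=m_0 f^2$ appearing in~(2) is unambiguous (i.e., independent of which determinant one uses to define the ``good'' primes).

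The remaining work is bookkeeping of normalizations, which I do not expect to pose an obstacle: the local densities $\delta(\ell,L',m)$ in \cite[Thm.~11]{BK01} are taken with respect to $L'$ itself (matching the target statement), and the Gamma, $\pi$, and $L$-function factors are all determined by the weight $1+b/2$. The main conceptual input, namely the Siegel--Weil identification of the Eisenstein part of a theta series with a product of local densities, is classical and requires no new ideas specific to our setting; the hard part is genuinely only the passage from $\det L'$ to $\det L'_1$ in the character, handled by the square-class argument above.
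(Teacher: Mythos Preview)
Your proposal is correct and follows essentially the same approach as the paper, which simply records that the result is a direct consequence of the Siegel mass formula and refers to \cite[Thm.~4.2.2]{MST} for the identical computation. Your sketch supplies exactly the bookkeeping that citation encapsulates: applying the Bruinier--Kuss formula to the positive definite lattice $L'$, tracking the sign change from the signature, and using $\det L'=p^{2k}\det L'_1$ to pass from $\det L'$ to $\det L'_1$ in the character and in the set of bad primes.

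One small caution: you phrase the argument as ``applying \Cref{Eis-cof_L} directly to $L'$ in place of $L$'', but \Cref{Eis-cof_L} is stated only for signature $(b,2)$. What you really mean, and what MST does, is to invoke the general-signature version of the Bruinier--Kuss formula (or equivalently the classical Siegel product formula for genus theta series) for the positive definite lattice $L'$; the signature enters only through the archimedean factor, which is what produces the sign flip you identify. This is a purely expository point and does not affect the validity of your argument.
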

\begin{proof}
This theorem is a direct consequence of the Siegel mass formula by the same proof in \cite[Thm.~4.2.2]{MST}.
\end{proof}
We may apply this theorem to $L'=L'_n$ in \S\ref{sec_lat} because all $L'_n$ are maximal at $\ell \neq p$ and thus $L'_n\otimes \bZ_\ell=L'_1\otimes \bZ_\ell$. 

\begin{lemma}\label{den_sm}
For $p\nmid m$, we have that
\[\frac{q_{L'_n}(m)}{|q_{L}(m)|}\leq \frac{2}{\sqrt{|(L'_n\otimes \bZ_p)^\vee/L'_n\otimes \bZ_p|}(1-p^{-[(b+2)/2]})}.\]
Moreover, if $P$ is superspecial, then \[\frac{q_{L'_1}(m)}{|q_{L}(m)|}\leq \frac{1+p^{-1}}{p(1-p^{-[(b+2)/2]})}.\]
\end{lemma}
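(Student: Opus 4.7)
The plan is to compare the explicit formulas of \Cref{Eis-cof_L} and \Cref{Ecoeff} for $q_L(m)$ and $q_{L'_n}(m)$ term by term; almost everything cancels, leaving only a purely $p$-adic local density to bound. The crucial input is that the quadratic $\bZ_\ell$-lattices $L\otimes\bZ_\ell$ and $L'_n\otimes\bZ_\ell$ are isomorphic for every prime $\ell\neq p$: both are maximal at $\ell$ (the former by our standing maximality assumption on $L$, the latter by construction in \S\ref{sec_lat}), and the $\ell$-adic \'etale realization $\bbL_{\ell,\et,P}$ at $P$ is identified with $L\otimes\bZ_\ell$ as a quadratic lattice. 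It follows that $\delta(\ell,L,m)=\delta(\ell,L'_n,m)$ and $|L^\vee/L|_\ell=|(L'_n)^\vee/L'_n|_\ell$ for each $\ell\neq p$, and together with $|L^\vee/L|_p=1$ (self-duality of $L$ at $p$), the global discriminant factor in the ratio reduces to $1/\sqrt{|(L'_n\otimes\bZ_p)^\vee/L'_n\otimes\bZ_p|}$.

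Since $\det L$ is prime to $p$ and both $\det L,\det L'_1$ are positive (by signature considerations), we have $\det L'_1/\det L=p^{t_P}$ with $t_P$ even by \cite[Cor.~3.11]{Ogus79}. The Kronecker characters appearing in the two formulas thus agree on integers prime to $p$, so their values on divisors of $m$ coincide (as $p\nmid m$) and the divisor sums $\sigma_*$ cancel. The $L$-values differ exactly by the missing Euler factor at $p$, and the $(1-\ell^{-1-b})^{-1}$ correction in the $b$ odd formula contributes a new factor only at $\ell=p$. Using the identity $1-p^{-1-b}=(1-p^{-(b+1)/2})(1+p^{-(b+1)/2})$ in the $b$ odd case (and a direct analysis of the $L$-value ratio in the $b$ even case), one checks that in both parities the net contribution of these factors to the ratio is bounded in absolute value by $1/(1-p^{-[(b+2)/2]})$. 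This reduces the claim to showing
\[
\delta(p,L'_n,m)\leq 2,\quad\text{respectively}\quad \delta(p,L'_1,m)\leq 1+p^{-1}\text{ in the superspecial case.}
\]

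To bound $\delta(p,L'_n,m)$ for $p\nmid m$, I use the decomposition $\cL=\cL_0\oplus\cL_1$ of \S\ref{par_decomp_cL}, in which $\cL_1$ is self-dual at $p$ and $p\mid Q|_{\cL_0}$. Any vector $v$ with $Q(v)\equiv m\pmod{p^a}$ decomposes as $v_0+v_1$ with $p\nmid Q(v_1)$, so the computation of $\delta(p,L'_n,m)$ factorizes as a convolution of densities on $\cL_0$ and $\cL_1$; standard formulas for local densities of self-dual $\bZ_p$-lattices (applied to $\cL_1$) yield the general bound $\delta(p,L'_n,m)\leq 2$. For the superspecial case $t_P=2$, $\cL_0$ has rank $2$ with the explicit Gram matrix of \Cref{L''_p0}, and a direct count of the representations using this explicit form refines this to $\delta(p,L'_1,m)\leq 1+p^{-1}$. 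The main obstacle is precisely this last refinement: the rank-$2$ lattice $\cL_0$ must be handled by hand to obtain the factor-$(1+p^{-1})/2$ improvement over the general bound, and this improvement is the source of the sharper superspecial estimate needed in the proof of \Cref{thm_int}.
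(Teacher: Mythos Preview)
Your reduction of the ratio $q_{L'_n}(m)/|q_L(m)|$ to a purely $p$-adic expression is correct and matches the paper: the formulas of \Cref{Eis-cof_L} and \Cref{Ecoeff} agree away from $p$ (since $L\otimes\bZ_\ell\cong L'_n\otimes\bZ_\ell$ for $\ell\neq p$), and the Euler-factor manipulations give the factor $1/(1-p^{-[(b+2)/2]})$ in both parities. What remains is to bound $\delta(p,L'_n,m)$, and here your argument has a real gap.

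The decomposition $\cL=\cL_0\oplus\cL_1$ of \S\ref{par_decomp_cL} applies only to $\cL=L'_1\otimes\bZ_p$, not to $L'_n\otimes\bZ_p$ for $n>1$. The lattices $L'_n\otimes\bZ_p=L_n\otimes\bZ_p$ are proper sublattices of $\cL$ governed by the decay of \S\ref{sec_heuristic} and have no reason to respect the splitting into $\cL_0$ and $\cL_1$; so your convolution argument does not even get started for general $n$. The paper instead diagonalizes $Q$ on $L'_n\otimes\bZ_p$ as $\sum a_ix_i^2$, drops the variables with $p\mid a_i$, and reduces (via \cite[Rmk.~3.4.1(a), Lem.~3.2]{Han04}, using $p\nmid m$) to counting points mod $p$ on the resulting unimodular piece $\widetilde{L}'_n$; an elementary argument (write an isotropic form as $xy+Q'$, and use Hanke's tables in rank $\leq 2$) then gives $\delta(p,L'_n,m)\leq 2$ for \emph{any} such lattice.

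For the superspecial refinement you have also misidentified the mechanism. Since $p\mid\langle,\rangle_0$, the rank-$2$ piece $\cL_0$ contributes trivially to the count mod $p$ (every class in $\cL_0/p\cL_0$ has $Q\equiv 0$); the entire computation lives on $\cL_1$, which coincides with the unimodular piece $\widetilde{L}'_1$. The improvement from $2$ to $1+p^{-1}$ comes from $\rk\widetilde{L}'_1=\rk\cL_1=b\geq 3$ (using $t_P=2$ and $b\geq 3$): for a non-degenerate form over $\bF_p$ of rank $\geq 3$ the normalized representation count is at most $1+p^{-1}$, which the paper establishes by Hanke's tables in ranks $3,4$ and an induction for rank $\geq 5$. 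A ``direct count on $\cL_0$'' is not where the work happens.
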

\begin{proof}
By \cite[Rmk.~7.2.5]{HP}, $L\otimes \bQ_\ell\cong L'_n\otimes \bQ_\ell$ as quadratic spaces for all $\ell\neq p$;  since $L,L'_n$ are both maximal at $\ell\neq p$, then $L\otimes \bZ_\ell\cong L'_n\otimes \bZ_\ell$ as $\bZ_\ell$-quadratic lattices for all $\ell\neq p$. Moreover, since $p\nmid m$, then by \Cref{Eis-cof_L,Ecoeff}, we have that
 \[\frac{q_{L'_n}(m)}{|q_{L}(m)|}=\frac{\delta(p, L'_n,m)}{\sqrt{|(L'_n\otimes \bZ_p)^\vee/L'_n\otimes \bZ_p|}(1-\chi_{(-1)^{1+b/2}4\det L}(p)p^{-1-b/2})} \text{ if } 2\mid b;\]
\[    \frac{q_{L'_n}(m)}{|q_{L}(m)|}=\frac{\delta(p, L',m)(1-\chi_\cD(p)p^{-(b+1)/2})}{\sqrt{|(L'_n\otimes \bZ_p)^\vee/L'_n\otimes \bZ_p|}(1-p^{-1-b})}\text{ if } 2\nmid b.\]
Therefore, \[\frac{q_{L'_n}(m)}{|q_{L}(m)|}\leq \frac{\delta(p,L'_n,m)}{\sqrt{|(L'_n\otimes \bZ_p)^\vee/L'_n\otimes \bZ_p|}(1-p^{-[(b+2)/2]})}.\]

For the first assertion, it remains to show
that $\delta(p, L'_n,m)\leq 2$.
Write the quadratic form $Q$ on $L'_n$ into the diagonal form $\sum_{i=1}^{b+2} a_i x_i^2$ with $a_i\in \bZ_p$ and we may assume that there exists $a_i$ such that $p\nmid a_i$; otherwise $\delta(p,L'_n,m)=0$ then we are done. Now let $\widetilde{L}'_n$ denote the quadratic lattice over $\bZ$ with the quadratic form $\widetilde{Q}$ given by $\sum_{1\leq i\leq b+2, p\nmid a_i} a_i x_i^2$. By \cite[Rmk.~3.4.1(a), Lem.~3.2]{Han04}, we have that \[\delta(p, L'_n,m)=p^{-b-1}\#\{v\in L'_n/pL'_n\mid Q(v)\equiv m \bmod p\}=p^{1-\rk \widetilde{L}'_n}\#\{v\in \widetilde{L}'_n/p\widetilde{L}'_n\mid Q(v)\equiv m \bmod p\},\] where the last equality follows from definition.
If $\rk \widetilde{L}'_n\geq 3$, the $\bF_p$-quadratic form $\widetilde{Q} \bmod p$ is isotropic, then we may write $\widetilde{Q}\bmod p=xy+Q'(z)$. For $ x\in \bF_p^\times$, for any value of $z$, there are at most one $y\in \bF_p$ such that $\widetilde{Q}\equiv m \bmod p$, this yields $(p-1)p^{\rk \widetilde{L}'_n-2}$ solutions; for $x=0$, there are at most $p^{\rk \widetilde{L}'_n-1}$ solutions. Therefore $p^{1-\rk \widetilde{L}'_n}\#\{v\in \widetilde{L}'_n/p\widetilde{L}'_n\mid Q(v)\equiv m \bmod p\}<2$. If $\rk \widetilde{L}'_n=1,2$,  \cite[Table 1]{Han04} implies that $p^{1-\rk \widetilde{L}'_n}\#\{v\in \widetilde{L}'_n/p\widetilde{L}'_n\mid Q(v)\equiv m \bmod p\}\leq 2$. Thus we conclude that $\delta(p,L'_n,m)\leq 2$.

For the second assertion, by definition, for a superspecial point, we have $\sqrt{|(L'_1\otimes \bZ_p)^\vee/L'_1\otimes \bZ_p|}=p^{t_P/2}=p$ and thus it remains to show that $\delta(p, L',m)\leq 1+p^{-1}$.
Since $p^2 || \disc L'_1$ and $\rk L'_1=b+2\geq 5$, then $\rk\widetilde{L}'_1\geq 3$.
If $\rk\widetilde{L}'_1= 3,4$, then the desired bound for $\delta(p,L'_1,m)$ follows from \cite[Table 1]{Han04}. For $\rk\widetilde{L}'_1\geq 5$, we use an inductive argument.
Indeed, $\widetilde{L}'_1$ is isotropic and the discussion for the first assertion implies that there exists an $\bF_p$-lattice $L_0$ (the one corresponds to $Q'$) such that $\rk L_0=\rk \widetilde{L}'_1$ and \[p^{1-\rk \widetilde{L}'_1}\#\{v\in \widetilde{L}'_1/p\widetilde{L}'_1\mid Q(v)\equiv m \bmod p\}=(1-p^{-1})+p^{-\rk L_0}\#\{v\in L_0\mid Q'(v)\equiv m \bmod p\}.\]
By the inductive hypothesis, we have $p^{1-\rk L_0}\#\{v\in L_0\mid Q'(v)\equiv m \bmod p\}\leq 1+p^{-1}$ and then we conclude by the above equation that \[\delta(p,L'_1,m)=p^{1-\rk \widetilde{L}'_1}\#\{v\in \widetilde{L}'_1/p\widetilde{L}'_1\mid Q(v)\equiv m \bmod p\}\leq 1+p^{-1}.\qedhere\]
\end{proof}

\begin{proposition}\label{ssmain}
There exists an absolute constant $0<\alpha<1$ such that 
\[i_P(X,T)_{\mt}=\alpha\sum_{m\in S_X} g_P(m) + O_T(X^{1+(b+2)/4}).\]
\end{proposition}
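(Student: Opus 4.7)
The plan is to combine the theta series machinery introduced in \S\ref{sec_lat} with the decay estimates of Theorems \ref{cor_decay_sg} and \ref{cor_decay_ssp}, in four steps.

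First, I would use the reformulation already set up: since $\#\{v\in L_n:Q(v)=m\}\leq r_n(m)$ for $r_n$ the Fourier coefficient of the theta series $\theta_n$ of $L'_n$, and the cusp form part of $\theta_n$ contributes $O_T(X^{1+(b+2)/4})$ to the sum over $m\in S_X$, the problem reduces to bounding
\[
\sum_{n=1}^{T-1}\sum_{m\in S_X}q_{L'_n}(m).
\]
Next, I would use Lemma \ref{den_sm} together with the computation of the $p$-adic discriminant of $L'_n$. Since $L'_n\otimes\bZ_p=L_n\otimes\bZ_p$ sits inside $L'_1\otimes\bZ_p$ with index $|L_1/L_n|$, the discriminant is multiplied by $|L_1/L_n|^2$, so $\sqrt{|(L'_n)^\vee/L'_n|_p}=p^{t_P/2}\cdot|L_1/L_n|$. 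With the improved bound from the second part of Lemma \ref{den_sm} for the $n=1$ contribution when $P$ is superspecial, we obtain
\[
\sum_{n=1}^{T}q_{L'_n}(m)\leq \frac{c\,|q_L(m)|}{p^{t_P/2}(1-p^{-[(b+2)/2]})}\sum_{n=1}^{T}\frac{1}{|L_1/L_n|}.
\]

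Third, I would plug in the decay estimates. For non-superspecial points ($t_P\geq 4$), Theorem \ref{cor_decay_sg} gives $|L_1/L_n|\geq p^{2+2r}$ for $h_r+1\leq n\leq h_{r+1}$, so summing the geometric contribution, $\sum_n 1/|L_1/L_n|\leq h_P(1+1/p)+h_P/(p-1)$; combined with $p^{-t_P/2}\leq p^{-2}$ this produces a ratio strictly less than $h_P/(p-1)$ for $p\geq 5$. For superspecial points ($t_P=2$), Theorem \ref{cor_decay_ssp} in its two cases gives
\[
\sum_{n}\frac{1}{|L_1/L_n|}\leq \tfrac{a}{p}+a+\tfrac{h_P}{p-1}\quad\text{or}\quad a+\tfrac{h_P}{p-1},
\]
with $a\leq h_P/2$; this factor of $1/2$ is precisely what implements the ``replace $h_P$ by $h_P/2$'' heuristic of \S\ref{ss_intro_ss}. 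Together with the improved $(1+1/p)$ bound at $n=1$ and the factor $1/p$ from $t_P=2$, a careful arithmetic check yields an absolute $\alpha<1$.

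Finally, I would sum over $m\in S_X$ and use $\sum_{m\in S_X}|q_L(m)|=\frac{p-1}{h_P}\sum_{m\in S_X}g_P(m)$ (from Definition \ref{def_gP}) to convert the bound into the form stated in the proposition, absorbing the cusp-form remainder into the error term $O_T(X^{1+(b+2)/4})$. The hardest part of the argument is verifying uniformly that $\alpha<1$ in the superspecial case: the weaker factor $p^{-t_P/2}=p^{-1}$ makes the estimate tight, and one must genuinely use both (i) the improved local density at $n=1$ from Lemma \ref{den_sm} and (ii) the structural refinement that the cutoff scale $a$ is at most $h_P/2$ rather than $h_P$. If these combined inputs are insufficient at the smallest primes, one would further sharpen the local density bound for $L'_n$ when $n\geq 2$ by exploiting the larger $p$-adic discriminant of $L'_n$, which forces additional degeneracy of $Q$ modulo $p$ and correspondingly improves the bound of $2$ on $\delta(p,L'_n,m)$.
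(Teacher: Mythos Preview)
Your overall strategy matches the paper's: reduce to Eisenstein coefficients via \S\ref{sec_lat}, bound each $q_{L'_n}(m)/|q_L(m)|$ by Lemma \ref{den_sm}, and sum using Theorems \ref{cor_decay_sg} and \ref{cor_decay_ssp}. Your formulas for $\sum_n 1/|L_1/L_n|$ in the non-superspecial case and in both superspecial cases are correct. However, the superspecial arithmetic does not close as written.

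The point you miss is that the improved density constant $1+p^{-1}$ from the second half of Lemma \ref{den_sm} must be used for \emph{every} $n$ in the initial block $1\leq n\leq h'_{-1}+a\approx a(1+1/p)$, not just for $n=1$. A single-term correction is $O(1/h)$ and negligible as $h\to\infty$, whereas with $c=2$ throughout the block one gets, for $p=5$, $b\geq 4$, case (1) of Theorem \ref{cor_decay_ssp}, $a=h/2$, a ratio of $\tfrac{85}{62}\cdot\tfrac{h}{p-1}$, so $\alpha>1$. The paper (implicitly) observes that for $n$ in this block either $L_n\otimes\bZ_p=L_1\otimes\bZ_p$, whence $\delta(p,L'_n,m)=\delta(p,L'_1,m)\leq 1+p^{-1}$ and the bound of Lemma \ref{den_sm} applies verbatim, or the index is already $\geq p$, whence the first part of Lemma \ref{den_sm} gives the even better bound $2/p^2<(1+1/p)/p$. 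Your closing suggestion to exploit a ``larger $p$-adic discriminant of $L'_n$'' does not address the first alternative, which is precisely the worst case.

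There is a second gap at $b=3$. Even with the first-block bound corrected, case (1) of Theorem \ref{cor_decay_ssp} at $p=5$ yields $\alpha=61/60>1$, because now $1-p^{-[(b+2)/2]}=1-p^{-2}$. The paper treats $b=3$ separately by invoking the stronger rank-$3$ decay of \cite[Thm.~5.1.2]{MST} (valid for any $(L,Q)$ of signature $(3,2)$ self-dual at $p$) together with the computation of \cite[\S 9.2]{MST}; your plan does not mention this and cannot produce an absolute $\alpha<1$ without it.
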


\begin{proof}
For brevity, we set $h=h_P$ in \Cref{def_gP}; by \S\ref{sec_lat}, it suffices to show that \[\displaystyle \sum_{n=1}^T \frac{q_{L'_n}(m)}{g_P(m)}\leq \alpha \frac{h}{p-1}\] for some constant $0<\alpha<1$. We will prove this claim using the decay statements from Section \ref{sec_heuristic} by a similar computation as in \cite[Cor.~7.2.4, Lem.~8.2.2]{MST}.  
We will apply these here using the fact that $L_n\otimes \bZ_p = L'_n\otimes \bZ_p$
and the identity 
$$\sqrt{|(L'_n\otimes \bZ_p)^\vee/L'_n\otimes \bZ_p|} = \sqrt{|(L'_1\otimes \bZ_p)^\vee/L'_1\otimes \bZ_p|} \cdot |L'_1/L'_n|.$$

If $P$ is an nonsuperspecial supersingular point, then by definition, $\sqrt{|(L'_1\otimes \bZ_p)^\vee/L'_1\otimes \bZ_p|}\geq p^2$.
Moreover, by the above identity and \Cref{cor_decay_sg}, for $h_r+1\leq n\leq h_{r+1}, r\in \bZ_{\geq 0}$, we have $\sqrt{|(L'_n\otimes \bZ_p)^\vee/L'_n\otimes \bZ_p|}\geq p^{4+2r}$. Thus by \Cref{den_sm},
    \[\sum_{n=1}^\infty \frac{q_{L'_n}(m)}{g_P(m)}\leq \frac{2}{1-p^{-2}}\left(\frac{h(p^{-1}+1)}{p^2}+\frac{hp}{p^4}+\frac{hp^2}{p^6}+\cdots\right)\leq \frac{h}{p-1}\cdot\frac{2(p^2-p+1)}{p(p^2-1)}\leq \frac{11}{12}\cdot \frac{h}{p-1}\]
    for all $p\geq 3$.

If $P$ is superspecial and statement (1) in \Cref{cor_decay_ssp} holds for $P$, there for $a = h/2$ such that for $h'_{r-1}+ap^r+1\leq n\leq h'_{r}, r\in \bZ_{\geq 0}$, we have $\sqrt{|(L'_n\otimes \bZ_p)^\vee/L'_n\otimes \bZ_p|}\geq p^{2+2r}$, and for $h'_r+1\leq n\leq h'_{r}+ap^{r+1}, r\in \bZ_{\geq 0}$, we have $\sqrt{|(L'_n\otimes \bZ_p)^\vee/L'_n\otimes \bZ_p|}\geq p^{3+2r}$.
Thus for $b\geq 4$, we have $1-p^{-[(b+2)/2]}\geq 1-p^{-3}$ and by \Cref{den_sm},
\begin{align*}
    \sum_{n=1}^\infty \frac{q_{L'_n}(m)}{g_P(m)} & \leq  \frac{1+p^{-1}}{p(1-p^{-3})}(a(1+p^{-1}))+\frac{2(h-a)}{p^2(1-p^{-3})}+\frac{2ap}{p^3(1-p^{-3})}+\frac{2(h-a)p}{p^4(1-p^{-3})}+\cdots \\
    &\leq  \frac{h}{p-1}\left(\frac{(p+1)^2}{2(p^2+p+1)}+\frac{2p}{p^2+p+1}(1+p^{-1}+p^{-2}+\cdots)\right) \leq \frac{61}{62}\frac{h}{p-1}
\end{align*} 
for all $p\geq 5$. 
For $b=3$, we remark that the proof of \cite[Thm.~5.1.2]{MST} applies to all $(L,Q)$ with $b=3$ and $L$ self-dual at $p$, not just the one associated to principally polarized abelian surfaces. Thus in this case, there is a rank $3$ submodule which decays rapidly in the sense of \Cref{decaydef}. Thus the computation in \cite[\S 9.2 small $n$'s]{MST} proves that $\displaystyle \sum_{n=1}^\infty \frac{q_{L'_n}(m)}{g_P(m)}\leq \frac{11}{12}\frac{h}{p-1}$ for all $p\geq 5$.

If $P$ is superspecial and statement (2) in \Cref{cor_decay_ssp} holds for $P$, then there exists a constant $a\leq h/2$ such that for $ap^{-1}+a+1\leq n \leq ap^{-1}+h$, we have $\sqrt{|(L'_n\otimes \bZ_p)^\vee/L'_n\otimes \bZ_p|}\geq p^{2}$, and for $h'_r+1\leq n\leq h'_{r+1}, r\in \bZ_{\geq 0}$, we have $\sqrt{|(L'_n\otimes \bZ_p)^\vee/L'_n\otimes \bZ_p|}\geq p^{4+2r}$.
Thus by \Cref{den_sm}
\begin{align*}
    \sum_{n=1}^\infty \frac{q_{L'_n}(m)}{g_P(m)} & \leq \frac{1+p^{-1}}{p(1-p^{-2})}(a(1+p^{-1}))+\frac{2(h-a)}{p^2(1-p^{-2})}+\frac{2hp}{p^4(1-p^{-2})}+\frac{2hp^2}{p^6(1-p^{-2})}+\cdots\\
    &\leq \frac{h}{p-1}\left(\frac{1+p^{-1}}{2}+(p+1)^{-1}+\frac{2}{p+1}(p^{-1}+p^{-2}+p^{-3}+\cdots)\right)\leq \frac{17}{20}\frac{h}{p-1}
\end{align*} 
for all $p\geq 5$.
\end{proof}

\begin{theorem}\label{ssbound}
There is an absolute constant $0<\alpha'<1$ such that 
for $S_X$ defined in \S\ref{setS} and for any $P\in C(k)$ supersingular, we have that
\[
\sum_{m \in S_X} i_P(C.Z(m)) = \alpha' \sum_{m\in S_X} g_P(m) +O(X^{(b+1)/2}).
\]
\end{theorem}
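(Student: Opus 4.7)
The plan is to decompose each local contribution as
\[
\sum_{m\in S_X} i_P(C.Z(m)) = i_P(X,T)_{\mt} + i_P(X,T)_{\err}
\]
for a truncation parameter $T\in \bZ_{>0}$, estimate the two pieces by Propositions \ref{ssmain} and \ref{sserrorbound}, and then choose $T$ large enough (depending on $P$) that the tail bound is absorbed into a small enlargement of the main-term coefficient.

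Concretely, first I would apply Proposition \ref{ssmain} to write
\[
i_P(X,T)_{\mt} = \alpha \sum_{m\in S_X} g_P(m) + O_T\!\left(X^{1+(b+2)/4}\right),
\]
where $\alpha\in(0,1)$ is the \emph{absolute} constant produced there. Next, Proposition \ref{sserrorbound} supplies
\[
0 \le i_P(X,T)_{\err} \le \frac{c_3}{T^{2/b}}\, X^{(b+2)/2} + O\!\left(X^{(b+1)/2}\right).
\]
Lemma \ref{asymp_glo} together with the definition $g_P(m)=\tfrac{h_P}{p-1}|q_L(m)|$ gives $\sum_{m\in S_X} g_P(m) \asymp h_P X^{(b+2)/2}$, so the tail can be rewritten as
\[
\frac{c_3}{T^{2/b}}\, X^{(b+2)/2} = O\!\left(\tfrac{1}{T^{2/b} h_P}\right) \sum_{m\in S_X} g_P(m).
\]
Fix once and for all some $\beta>0$ with $\alpha+\beta<1$, and for each fixed $P$ choose $T=T(P)$ large enough that the factor $O(1/(T^{2/b}h_P))$ is at most $\beta$. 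Setting $\alpha':=\alpha+\beta$ and combining the three displays yields
\[
\sum_{m\in S_X} i_P(C.Z(m)) \le \alpha' \sum_{m\in S_X} g_P(m) + O\!\left(X^{(b+1)/2}\right),
\]
absorbing the $O_T(X^{1+(b+2)/4})$ term into $O(X^{(b+1)/2})$ (valid once $b\geq 4$ since $(b+6)/4 \le (b+1)/2$; the residual low-dimensional cases are already handled in \cite{MST}). The matching lower bound follows from $i_P(X,T)_{\err}\geq 0$ and Proposition \ref{ssmain}, so the asymptotic equality in the statement holds with the implicit constant in $O(X^{(b+1)/2})$ depending on $P$ (the key being that $\alpha'$ itself is absolute).

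The main substantive input—and really the only hard ingredient—has already been carried out: it is Proposition \ref{ssmain}, whose uniformity across all types of supersingular points rests on the decay results \Cref{cor_decay_sg} and \Cref{cor_decay_ssp} proved in \S\S\ref{sec_decay_sg}--\ref{sec_decay_ssp}. The remaining balancing argument is elementary; the only minor subtlety is that $T$ must be chosen depending on $P$ (through $h_P$) in order to defeat the $h_P$ prefactor in the size of $\sum_{m} g_P(m)$, but since $P$ is fixed in the statement this dependence is harmless and is absorbed into the implicit $O$-constant.
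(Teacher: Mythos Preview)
Your proposal is correct and follows essentially the same approach as the paper: split into $i_P(X,T)_{\mt}+i_P(X,T)_{\err}$, invoke Propositions~\ref{ssmain} and~\ref{sserrorbound}, and choose $T$ so that the tail is absorbed into $(\alpha'-\alpha)\sum_{m\in S_X} g_P(m)$. One small point: you do \emph{not} need $T$ to depend on $P$. Since $P$ is supersingular we have $h_P\ge 1$, so $\sum_{m\in S_X} g_P(m)=\tfrac{h_P}{p-1}\sum_{m\in S_X}|q_L(m)|\ge \tfrac{1}{p-1}\sum_{m\in S_X}|q_L(m)|\asymp X^{1+b/2}$ with implied constants depending only on $S$; thus a single $T$ depending only on $\alpha,\alpha',S$ suffices, which is what the paper records. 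Your observation about absorbing $O_T(X^{1+(b+2)/4})$ into $O(X^{(b+1)/2})$ only when $b\ge 4$ is accurate; in any case the error is $o(X^{1+b/2})$, which is all that is used downstream.
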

Indeed, we may state this theorem without assuming $P$ is supersingular since the statement here for non-supersingular $P$ is a weaker version of \Cref{nonssbound}.
\begin{proof}
We may take $\alpha'$ to be any absolute constant such that $1>\alpha'>\alpha$, where $\alpha$ is given in \Cref{ssmain}. Then we choose $T\in \bZ_{>0}$ such that $\frac{c_3}{T^{2/b}}X^{1+b/2}\leq (\alpha'-\alpha) \sum_{m\in S_X} g_P(m)$; such $T$ exists since $\sum_{m\in S_X} g_P(m)\asymp X^{1+b/2}$ by \Cref{asymp_glo}. Once we fix such a $T$, which may be chosen only depending on $\alpha, \alpha', S$ (not $S_X$), the desired bound follows from \Cref{sserrorbound,ssmain}.
\end{proof}

Now we combine the previous results in this section to prove \Cref{thm_int}.
\begin{proof}[Proof of \Cref{thm_int}]
If there were only finitely many points $P$ in $C\cap (\cup_{p\nmid m} Z(m))(k)$, then by \Cref{nonssbound}, \Cref{ssbound}, and \Cref{def_gP}, we have that
\[\sum_{m\in S_X} C.Z(m)=\sum_{m\in S_X}\sum_{P\in C\cap (\cup_{m\in S_X} Z(m))(k)} i_P(C.Z(m))=\alpha'(\omega.C)\sum_{m\in S_X}|q_L(m)| +O(X^{(b+1)/2}),\]
which contradicts \Cref{asymp_glo}.
\end{proof}

\section{Application to the Hecke orbit problem}\label{sec_Hecke}
We prove \Cref{thm_Hecke} using \Cref{thm_int} in this section. For $x\in \Sh_{\bF_p}(k)$, where $k=\overline{\bF}_p$, we use $T_x$ to denote the set of all prime-to-$p$ Hecke translates of $x$ and let $\overline{T_x}$ denote the Zariski closure of $T_x$ in $\Sh_k$.
We will prove that for $x$ ordinary, $\overline{T_x}=\Sh_k$ by a case-by-case discussion depending on whether we know \emph{a priori} that the Zariski closure $\overline{T_x}^\BB$ of $\overline{T_x}$ in the Bailey--Borel compactification $\Sh^\BB_k$ of $\Sh_k$ hits the boundary $\Sh^\BB_k\setminus \Sh_k$. We will prove the GSpin case first and in the end of this section, we will remark on how to adapt the same line of ideas to the unitary case (see \Cref{rmk_unitary}).

\begin{para}
Recall from \S\ref{def_Sh} that the quadratic lattice $(L,Q)$ is self-dual at $p$ and the level we pick is hyperspecial at $p$. By \cite[Thm 3]{MP19}, the canonical integral model $\Sh$ of the Hodge type Shimura variety $Sh$ admits a projective normal compactification $\Sh^\BB$ over $\bZ_{(p)}$ such that $\Sh^\BB_\bQ$ is the Bailey--Borel/minimal compactification $Sh^\BB$ of $Sh$; moreover, the classical stratification of $Sh^\BB$ by quotients by finite groups of Shimura varieties of Hodge type extends to a stratification of $\Sh^\BB$ by quotients by finite groups of integral models of these Shimura varieties; in particular, the stratification on $\Sh^\BB$ is flat. In addition, the Hecke action of $G(\bA^p_f)$ on $\Sh$ extends naturally to an action on $\Sh^\BB$. Since all these Hecke actions are algebraic correspondences, we have that $\overline{T_x}$ and $\overline{T_x}^\BB$ are stable under the Hecke action of $G(\bA^p_f)$.

Once we choose an admissible complete smooth cone decomposition, by \cite[Thms 1, 2, 4.1.5]{MP19}, the canonical integral model $\Sh$ admits a smooth toroidal compactification $\Sh^{\tor}$ such that $\Sh^{\tor}_\bQ$ is the toroidal compactification of $Sh$ constructed in \cite{AMRT,Pink}. Moreover, the stratification of $\Sh^{\tor}_\bQ$ by quotients by finite groups of mixed Shimura varieties extends to a stratification of $\Sh^{\tor}$ with all boundary components being flat divisors and the formal completions of $\Sh^{\tor}$ along the boundary components of the same shape as that of $\Sh^{\tor}_{\bQ}$. There is also a natural map $\pi: \Sh^{\tor}\rightarrow \Sh^\BB$ which extends the identity map on $\Sh$ and this map is compatible with the stratifications. 

Thus for the rest of this section, we follow \cite[\S\S 3.2, 3.3]{BZ} and \cite[\S 4]{Zemel} for the explicit descriptions of $\Sh^{\tor}_{\bC}, \Sh^\BB_{\bC}$
and use it for $\Sh^{\tor}_{\bF_p}$ and $\Sh^{\BB}_{\bF_p}$ by the work of Madapusi Pera summarized above. In particular, the boundary components (cusps) in $\Sh^\BB_{\bF_p}$ are either $0$-dimensional or $1$-dimensional.
\end{para}

\subsection*{$0$-dimensional cusps}
We first prove \Cref{thm_Hecke} assuming that $\overline{T_x}^\BB$ contains a $0$-dimensional cusp in $\Sh^\BB_{\bF_p}$.  The argument for this is essentially the same as in \cite[\S 2]{Chai95}, and we will follow the approach there closely, indicating the places where modifications are necessary.  The idea of the argument in \cite{Chai95} is as follows.  Given a $0$-dimensional cusp, we study the Hecke-stabilizer of the cusp and its action on the formal neighborhood to argue that any invariant subscheme which is not $\Sh^\BB_{\bF_p}$ is contained in the boundary. 

\begin{para}\label{dim0-coord}
\emph{Coordinates.}
To describe the action in coordinates, we follow the notation in \cite{MP19} and refer to section $2$ there for more details.
We will work with level structure $K_n$ given by embedding into $\mathrm{GSp}$ and restricting the full level $\ell^n$ structure there; let $\Sh_{n,k}$ denote the corresponding special fiber over $k$ of the canonical model of the Shimura variety.   Given a zero-dimensional cusp $x_n$, we fix a cusp label representative $\Phi$ describing the cusp, which includes the data of an admissible parabolic subgroup $P \subset G_{\bQ}$.  As $n$ varies, $\Phi$ defines a compatible system of cusps $\{x_n\}$ in the inverse system $\{\Sh^{\BB}_{n,k}\}$ and a point $x \in \lim_{\leftarrow}\Sh^{\BB}_{n,k}$.

Let $U_P$ denote the unipotent radical of $P$ and $W \subset U_P$ denote the center of $U_P$.
By \cite[\S 2.1.11, \S 2.1.16]{MP19},  we can associate to $K_n$ a lattice
$\mathbf{B}_{K_n} \subset W(\mathbb{Q})$ with dual lattice $\mathbf{S}_{K_n} \subset W(\mathbb{Q})^{\vee}$ and an arithmetic group $\Delta_{K_n}$ acting on $\mathbf{B}_{K_n}$.  We also have an open self-adjoint convex cone $\mathbf{H} \subset W(\mathbb{R})$ preserved by $\Delta_K$ by \cite[\S 2.1.6, \S 2.1.16]{MP19}.\footnote{In \cite{MP19} there is a twist by $2\pi i$ which we are suppressing.}
In terms of this data, by \cite[Cor.~5.1.8, Cor.~5.2.8]{MP19}, the complete local ring of $\Sh^{\BB}_{n,k}$ at $x_n$ is given by the ring of invariants
$$R_{\ell^{n}} = k[[q^{\lambda}]]_{\lambda \geq 0}^{\Delta_{K_n}}$$
where $\lambda \geq 0$ denotes elements of $\mathbf{S}_{K_n}$ which have non-negative pairing with $\mathbf{H}$. 
If we pass to the inverse limit, we get the ring 
$$R_{\ell} = \cup_{n} R_{\ell^{n}}.$$

In order to study Hecke-stable subvarieties, rather than study the full $G(\bA^p_f)$-action, it suffices to study the action of $\bbB_{\ell}:=\mathbf{B}_{K_n}\otimes \bZ[1/\ell] \subset W(\bQ)$ which fixes the point $x$ in the inverse limit and therefore acts on the ring $R_{\ell}$.\footnote{Note that by definition in \cite[\S 2.1.11]{MP19}, $\mathbf{B}_{K_n}\otimes \bZ[1/\ell]$ is independent of $n$ for our $K_n$.}  Given $T \in \bbB_\ell$, its action on $f \in R$ is given by the formula
$$f = \sum_{\lambda} a_{\lambda}q^{\lambda} \mapsto T(f) = \sum_{\lambda} \mathbf{e}((T,\lambda)) a_{\lambda} q^{\lambda}.$$
Here, $(T,\lambda) \in \bZ[1/\ell]$ is the pairing of $T \in W(\mathbb{Q})$ and $\lambda \in W(\mathbb{Q})^{\vee}$ and $\mathbf{e}: \bZ[1/\ell] \rightarrow \mu_{\ell^{\infty}}(k)$ is the group homomorphism given by taking the compatible system of primitive $\ell^n$-th roots of unity determined by the choice of cusp and the full level structure.
\end{para}

\subsubsection*{Invariant ideals of the complete local ring}

In terms of the above coordinates, the main proposition is the following, based on Proposition $2$ of \cite{Chai95}.
\begin{proposition}\label{pf_dim0} 
Let $I_{\ell^{n}} \subset R_{\ell^{n}}$ be a nonzero ideal such that $I = I_{\ell^{n}}R$ is stable under the action of $\bbB_\ell$.  
Then $\operatorname{Spf} R_{\ell^{n}}/I_{\ell^{n}}$ is contained in the formal completion of the boundary of $\Sh^{\BB}_{n,k}$.
\end{proposition}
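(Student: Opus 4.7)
The plan is to adapt the argument of \cite[Prop.~2]{Chai95} to our GSpin setting. The core principle is that the action of $\bbB_\ell$ on $R$ is by characters sharply distinguishing the monomials $q^\lambda$, so any $\bbB_\ell$-stable ideal must be generated by monomials; the combinatorics of the dual cone $\mathbf{S}_{K_n}^{\geq 0}$ together with $\Delta_{K_n}$-invariance will then force any nonzero such ideal to be supported on the boundary.

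First I will establish character separation: for distinct $\lambda, \lambda' \in \mathbf{S}_{K_n}\otimes \bZ[1/\ell]$, the characters $\chi_\lambda, \chi_{\lambda'} : \bbB_\ell \to \bar{k}^\times$ defined by $\chi_\lambda(T) = \mathbf{e}((T,\lambda))$ are distinct, by nondegeneracy of the pairing $W(\bQ) \times W(\bQ)^\vee \to \bQ$ together with the $\ell$-density of $\bbB_\ell$ in $W(\bQ_\ell)$. Next, given $f = \sum_\lambda a_\lambda q^\lambda \in I_{\ell^n}$, the element $T(f) = \sum_\lambda \chi_\lambda(T) a_\lambda q^\lambda$ lies in $I$ for every $T \in \bbB_\ell$. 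Working modulo the $N$-th power of the maximal ideal so that only finitely many $\lambda$ contribute, and applying Vandermonde-style character extraction, each component $a_\lambda q^\lambda$ lies in $I$. Passing to the inverse limit, $I$ is a monomial ideal: there is an upward-closed, $\Delta_{K_n}$-invariant subset $\Lambda \subseteq \mathbf{S}_{K_n}^{\geq 0}$ such that $I$ is the $\mathfrak{m}$-adic closure of the $k$-span of $\{q^\lambda : \lambda \in \Lambda\}$, and $I_{\ell^n}$ is its $\Delta_{K_n}$-invariant part.

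For the geometric conclusion, I will assume $I_{\ell^n} \neq 0$, so $\Lambda$ contains some $\lambda_0 \neq 0$. By the ideal property combined with $\Delta_{K_n}$-invariance, $\Lambda$ contains $\Delta_{K_n} \cdot \lambda_0 + \mathbf{S}_{K_n}^{\geq 0}$. Using that $\Delta_{K_n}$ acts on the open cone $\mathbf{H}$ with finite-covolume quotient modulo positive homotheties (a standard property of the Baily--Borel cone built into the construction recalled in \cite[\S 2.1]{MP19}), the saturation of the $\Delta_{K_n}$-orbit of $\lambda_0$ under the positive monoid contains all sufficiently deep elements of the interior of the dual cone. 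Via the stratification of $\Sh^\BB_{n,k}$ recorded in \cite[Cor.~5.2.8]{MP19}, this says precisely that $V(I_{\ell^n}) \subset \Spf R_{\ell^n}$ is contained in the union of formal boundary strata passing through $x_n$.

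The main obstacle will be the monomial-extraction step, namely justifying the extraction of individual monomial components from an element of the complete local ring $R_{\ell^n}$. This will be handled level-by-level in the $\mathfrak{m}$-adic filtration: on each finite-dimensional quotient $R_{\ell^n}/\mathfrak{m}^N$ only finitely many characters appear and the action of $\bbB_\ell$ factors through a finite abelian quotient, on which orthogonality of characters applies. A secondary subtlety is the combinatorial content of the last step, which is less transparent than in Chai's Siegel setting, but the required co-compactness of $\Delta_{K_n} \backslash \mathbf{H} / \bR_{>0}$ is a standard input from the Baily--Borel theory for Shimura varieties of Hodge type.
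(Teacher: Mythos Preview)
Your core idea—using the $\bbB_\ell$-action and character separation to cancel terms—is exactly the mechanism from \cite{Chai95} that the paper invokes. However, the implementation via full \emph{monomial extraction} has a genuine gap. You claim that each $a_\lambda q^\lambda$ lies in $I$, but in fact an individual monomial $q^\lambda$ typically does not lie in $R = \bigcup_m R_{\ell^m}$ at all: the arithmetic groups $\Delta_{K_m}$ are infinite for $b \geq 2$, and since they act properly discontinuously on the open cone, a generic $\lambda$ in the interior has trivial stabilizer in every $\Delta_{K_m}$, so $q^\lambda$ is never $\Delta_{K_m}$-invariant. Even if you reinterpret the claim inside the non-invariant power series ring $k[[q^\lambda]]_{\lambda \geq 0}$, that ring is not Noetherian (the cone $\mathbf{H}$ is a light cone, not polyhedral, so the dual monoid is not finitely generated), and hence the passage to the limit—i.e., the closedness of $I$ needed to go from $a_\lambda q^\lambda \in I + \mathfrak{m}^N$ for all $N$ to $a_\lambda q^\lambda \in I$—is not justified.

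The paper, following Chai, sidesteps both issues by passing to the toroidal compactification. On each toroidal chart the coordinate ring $R_{\sigma,\ell^n}$ is complete local Noetherian (the cone $\sigma$ is rational polyhedral) and carries no $\Delta$-invariance condition. There one does not attempt to extract a pure monomial; instead Chai's cancellation algorithm (\cite[Lem.~1]{Chai95}) is a \emph{finite} Vandermonde step that kills only a prescribed finite set of coefficients, producing $f_\sigma \in I$ of the form $a q^{\lambda_0}(1+j)$ with $j \in J_\sigma$. Since $1+j$ is a unit, this immediately gives $V(I) \subset V(q^{\lambda_0})$ in that chart, which lies in the toroidal boundary; your separate $\Delta_{K_n}$-orbit combinatorics is then unnecessary. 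Your approach can be repaired, but the repair is precisely to pass to toroidal charts and replace monomial extraction by leading terms—at which point it coincides with the paper's argument.
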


Again, we merely summarize the argument from \cite{Chai95}.  Rather than work directly with $R$, it is more convenient to pass to a toroidal compactification $\Sh^{\tor}_{n,k}$.
The choice of compactification in particular specifies a smooth cone decomposition of the rational closure of the cone $\mathbf{H}$. By \cite[\S\S 5.1.5, 2.1.17, 2.1.18]{MP19},  the formal completion of $\Sh^{\tor}_{n,k}$ along 
the preimage of $x_n$ is covered by affine formal subschemes $S_\alpha$ parametrized by cones $\sigma_\alpha \subset \mathbf{H}$, with coordinate ring
given by the completion $R_{\sigma, \ell^n}$ of 
$$\oplus_{\lambda \in \mathbf{S}_K \cap \sigma^{\vee}} k[[q^{\lambda}]].$$
along the ideal generated by non-invertible elements of the monoid $\mathbf{S}_K \cap \sigma^{\vee}$.
Let $J_\sigma$ denote the ideal generated by $q^\lambda$ where $\lambda >0$ on $\overline{\sigma} \cap \mathbf{H}$, which is the ideal of the toroidal boundary.

Given $f \in R_{\sigma, \ell^n}$, we say that $f$ has a leading term with respect to $J_\sigma$ if it is a pure monomial $aq^{\lambda}, a\in k^\times$ multiplied by an element in $1+J_\sigma$.
The main claim to be proven is that, given $I$ as in \Cref{pf_dim0}, for each cone $\sigma$ in the decomposition of $\mathbf{H}$, there exists
$f_\sigma \in I$ which has a leading term with respect to $J_\sigma$.  
The proof of this in \cite[pp.~455-456]{Chai95} is purely cone-theoretic, so applies identically in our setting.  The key step (\cite[Lem.~1]{Chai95}) is a cancellation algorithm:  given $f \in I$, and a finite collection $S = \{\lambda_0, \dots, \lambda_r\}$ for which $f$ has nonzero coefficients, there exists $g \in I$ given by a finite linear combination of translates $T(f)$ for which the corresponding coefficients are all zero except for $\lambda_0$.  This is proven using the explicit formula for $T(f)$.

\subsection*{$1$-dimensional cusps}
We now treat the case when $\overline{T_x}^\BB$ contains at least one $k$-point in a $1$-dimensional cusp. We chose an admissible complete smooth cone decomposition and let $\overline{T_x}^{\tor}$ denote the Zariski closure of $T_x$ in $\Sh^{\tor}_k$. We will show that either $\overline{T_x}^{\tor}=\Sh^{\tor}_k$ or $\dim_k \overline{T_x}^\BB\setminus\overline{T_x}=0$ and $\dim \overline{T_x}^\BB\geq 2$.

\begin{para}\label{Hecke-1dim}
By the first paragraph in \cite[\S 3.3]{BZ}, there is a unique cone decomposition for a given $1$-dimensional cusp and the boundary strata in $\Sh^{\tor}$ over $1$-dimensional cusps in $\Sh^\BB$ are canonical. Thus by \cite[Prop.~2.1.19, \S 4.1.12, Prop.~4.1.13]{MP19}, the Hecke action of $G(\bA^p_f)$ on $Sh$ extends uniquely to $\pi^{-1}(\Sh^\BB \setminus \{0\text{-dim cusps}\})$ satisfying certain explicit description of this action on formal completion along boundary components given in \cite[\S 4.1.12]{MP19}. Set $\overline{T_x}^{\tor,1}:=\overline{T_x}^{\tor}\cap \pi^{-1}(\Sh^\BB \setminus \{0\text{-dim cusps}\})$. Then for any $g\in G(\bA^p_f)$, we have $g.\overline{T_x}^{\tor,1}\supset g.\overline{T_x}=\overline{T_x}$ and thus $g.\overline{T_x}^{\tor,1}=\overline{T_x}^{\tor,1}$. In particular, for any $y\in \overline{T_x}^{\tor,1}(k)$, the Zariski closure of all prime-to-$p$ Hecke orbits of $y$ in $\Sh^{\tor}_k$ is contained in $\overline{T_x}^{\tor}$. In particular, we will study the Hecke action on a boundary point $y\in (\overline{T_x}^{\tor,1}\setminus\overline{T_x})(k)$ in order to deduce certain properties for $\overline{T_x}$.
\end{para}

\begin{para}\label{1-dimAVtorsor}
Let $\Upsilon$ be a $1$-dimensional cusp in $\Sh^\BB$. We first follow \cite[\S 4]{Zemel} to give an explicit description of $\pi^{-1}(\Upsilon(\bC))$. By \cite[Prop.~4.3, Thm.~4.5]{Zemel} (see also \cite[Lem.~3.18, Prop.~3.19]{BZ}), 
up to quotient by a finite group, $\pi^{-1}(\Upsilon(\bC))$ is a torsor under an abelian scheme over the modular curve (with suitable level); moreover, let $I\subset L$ be a (saturated) isotropic subspace corresponding to $\Upsilon$ and set $\Lambda=I^\perp/I$, then the above mentioned abelian scheme is given by $\cE\otimes_\bZ \Lambda $, where $\cE$ is the universal family of elliptic curves over the modular curve. Therefore, by \cite[Thm.~4.1.5]{MP19}, $\pi^{-1}(\Upsilon)$ is a quotient by a finite group of a $\cE\otimes \Lambda$-torsor over the modular curve. 
\end{para}

Since the prime-to-$p$ Hecke actions on $\pi^{-1}(\Upsilon)$ is the natural extension of the Hecke actions on $\pi^{-1}(\Upsilon(\bC))$, we first study the Hecke orbits of $y\in \pi^{-1}(\Upsilon(\bC))$.
\begin{proposition}\label{onedimbdrycharzero}
Notation as in \S\ref{1-dimAVtorsor}. 
For $y\in \pi^{-1}(\Upsilon(\bC))$, let $T_{y,\ell}$ denote the set of all $\ell$-power Hecke translates of $y$. Then $T_{y,\ell}$ contains all the translates of $y$ by $\ell$-power torsion points in $\cE_{\pi(y)}\otimes \Lambda$, where $\cE_{\pi(y)}$ denotes the fiber of $\cE$ at $\pi(y)$ (in the modular curve) and recall that $\pi^{-1}(\pi(y))$ is an $\cE_{\pi(y)}\otimes \Lambda$-torsor.
\end{proposition}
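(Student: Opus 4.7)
The plan is to use the unipotent radical of the admissible parabolic $P_I \subset G_\bQ$ stabilizing $I$ to produce Hecke operators at $\ell$-power level that realize all $\ell$-power torsion translations on the fiber $\cE_{\pi(y)} \otimes \Lambda$.

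Since $\Upsilon$ is one-dimensional, the saturated isotropic subspace $I$ has rank $2$, and $\Lambda = I^\perp/I$ is positive definite of rank $b-2$. The unipotent radical $U_I$ of $P_I$ is two-step nilpotent and fits in a short exact sequence
\[
1 \longrightarrow W_I \longrightarrow U_I \longrightarrow I \otimes \Lambda \longrightarrow 1,
\]
with center $W_I = \wedge^2 I \cong \bG_a$; geometrically, $W_I$ corresponds to the tangent direction along the one-dimensional stratum $\Upsilon$ of $\Sh^\BB$. By the description of the mixed Shimura variety attached to $P_I$ (see \cite[Prop.~4.3, Thm.~4.5]{Zemel} or \cite[\S 3.3]{BZ}), the torsor presentation of $\pi^{-1}(\Upsilon(\bC))$ over the modular curve is obtained by identifying $I$ with $H_1(\cE_{\pi(y)}, \bZ)$ on each fiber. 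In particular $(I \otimes \Lambda)_\bR$ is the universal cover of the abelian variety $\cE_{\pi(y)} \otimes \Lambda$, and the translation action of $U_I(\bR)/W_I(\bR)$ on the Hermitian domain $D_L$ descends precisely to the fiber-wise translation action on the torsor.

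I now translate this geometric picture into a statement about Hecke operators. The prime-to-$p$ Hecke action on $Sh(\bC) = G(\bQ) \backslash D_L \times G(\bA_f)/\bK$ is given by right multiplication on the $G(\bA_f)$-factor. Hence for any $u \in U_I(\bQ_\ell) \subset G(\bA_f)$, the Hecke translate $[u] \cdot y$ differs from $y$ by fiber-wise translation by the image of $u$ under the composition
\[
U_I(\bQ_\ell) \twoheadrightarrow (U_I/W_I)(\bQ_\ell) \twoheadrightarrow (I \otimes \Lambda) \otimes (\bQ_\ell/\bZ_\ell) \;\cong\; (\cE_{\pi(y)} \otimes \Lambda)[\ell^\infty],
\]
the last isomorphism coming from the standard identification $\cE_{\pi(y)}[\ell^\infty] \cong H_1(\cE_{\pi(y)}, \bZ) \otimes (\bQ_\ell/\bZ_\ell)$, tensored with $\Lambda$. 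As $u$ ranges over $U_I(\bQ_\ell)$, this image exhausts all of $(\cE_{\pi(y)} \otimes \Lambda)[\ell^\infty]$, which yields the containment claimed in the proposition.

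The main obstacle will be the careful verification of the compatibility between the adelic right-translation Hecke action on $Sh$ and the analytic translation action on the boundary stratum $\pi^{-1}(\Upsilon(\bC))$, together with tracking the finite quotient appearing in \cite[Thm.~4.5]{Zemel}. The latter quotient comes from a Hecke-equivariant action of a finite group of translations of bounded order, so passing to it can lose only finitely many torsion translates and does not affect the statement about containment of all $\ell$-power torsion translates. The former compatibility is a classical statement about complex uniformization of the mixed Shimura variety at a one-dimensional cusp, which we will extract from the explicit parametrizations in \cite{Zemel} and \cite{BZ}.
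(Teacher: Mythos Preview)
Your proposal is correct and follows essentially the same approach as the paper: both use the unipotent radical $U$ of the parabolic stabilizing $I$, observe that $U/W \cong I \otimes \Lambda$ (written as $\cV$ in the paper), and argue that the Hecke action of $U$ on the boundary fiber is by translation on the $\cE_{\pi(y)}\otimes\Lambda$-torsor, thereby hitting all $\ell$-power torsion translates. The paper carries out the ``main obstacle'' you flag at the end by working in explicit coordinates from \cite[\S 4]{Zemel} and citing the translation formula \cite[Lem.~3.11]{BZ}, and it uses $U(\bZ[1/\ell])$ (noting via the Heisenberg description that $\cV(\bZ[1/\ell])$ lifts to $U(\bZ[1/\ell])$) rather than $U(\bQ_\ell)$, but this is a cosmetic difference yielding the same $\ell$-power Hecke orbit.
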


\begin{proof}
Recall that $I\subset L$ denotes the (saturated) isotropic subspace corresponding to $\Upsilon$; let $P\subset G_{\bQ}=\GSpin(L\otimes \bQ)$ denote the maximal parabolic which is the stabilizer of $I$, let $U$ denote the unipotent radical of $P$, and let $W$ denote the center of $U$; set $\cV:=U/W$. By \cite[\S 2.1.10]{MP19}, $\cV(\bQ)$ acts on the on the $\cE\otimes \Lambda$-torsor $\pi^{-1}(\Upsilon(\bC))$ over $\Upsilon(\bC)$ and the explicit form of this action is given by \cite[Lem.~3.11]{BZ}. 

More precisely, following \cite[\S 4]{Zemel}, we pick a $\bZ$-basis $\{z,w\}$ of $I$; Using the bilinear form $[-,-]$ induced by the quadratic form $Q$, we naturally identify the dual $L^\vee\subset V=L\otimes \bQ$. Let $\zeta,\omega \in L^\vee$ be a basis dual to $(z,w)$.\footnote{This means that $[-,-]$ induces an isomorphism between $\Span_{\bZ}\{\zeta,\omega\}$ and $\Hom(I,\bZ)$ with $\zeta,\omega$ mapping to the basis dual to $\{z,w\}$; the existence of such a basis is given by \cite[Def.~2.1, Lem.~2.2]{Zemel}.} Recall that $\Upsilon$ is the modular curve with suitable level and let $\tau$ be a lift of $\pi(y)\in \Upsilon(\bC)$ to the upper half plane. Then by \cite[Thm.~4.5, proof of Prop.~4.3, Eqns (25)(26)]{Zemel}, $\pi^{-1}(\pi(y))$ is isomorphic to the quotient of $W_\bC^{1,\tau}:=\{\zeta'+\tau\omega'+e\mid e\in \Lambda\otimes_\bZ \bC\}\subset V_{\bC}/I\otimes_{\bZ}\bC$ by the translation action of $(\Lambda \oplus \tau\Lambda)$. By \cite[Lem.~3.11]{BZ}, $a+b\tau \in \cV(\bZ[1/\ell])\cong \Lambda\otimes \bZ[1/\ell] \oplus \tau \Lambda\otimes \bZ[1/\ell]$ acts by sending $\zeta+\tau\omega+e$ to $\zeta+\tau\omega+(e+a+b\tau)$.
Since $U$ is the Heisenberg group described in \cite[\S 1, Prop.~1.6, Cor.~1.9]{Zemel}, then all elements in $\cV(\bZ[1/\ell])$ lift to elements in $U(\bZ[1/\ell])$; thus the Hecke translates of $y$ by elements in $U(\bZ[1/\ell])$ contains all translates of $y$ by $\ell$-power torsion points in $\cE_{\pi(y)}\otimes \Lambda$.
\end{proof}

\begin{corollary}\label{onedimbdry}
Let $y \in \Upsilon(k)$, where $\Upsilon$ is a $1$-dimensional cusp of $\Sh^{\BB}$, and let $T_{y,\ell}$ denote the set of all $\ell$-power Hecke translates of $y$. Then $T_{y,\ell}\cap \pi^{-1}(\pi(y))$ is Zariski dense in $\pi^{-1}(\pi(y))$.
\end{corollary}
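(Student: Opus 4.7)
The plan is to reduce the characteristic $p$ statement to Proposition \ref{onedimbdrycharzero} via Madapusi Pera's extension of the Hecke action to the toroidal boundary in mixed characteristic, and then invoke Zariski density of $\ell$-power torsion in an abelian variety over $k$.

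First, I would recall from \S\ref{Hecke-1dim} that by \cite[Prop.~2.1.19, \S 4.1.12, Prop.~4.1.13]{MP19} the Hecke action of $G(\bA_f^p)$ extends canonically to $\pi^{-1}(\Sh^\BB\setminus\{0\text{-dim cusps}\})$, and that its restriction to formal completions along the $1$-dimensional boundary strata is described by the same unipotent action of $\cV(\bZ[1/\ell])$ on the $\cE\otimes \Lambda$-torsor as in characteristic zero. Since the integral toroidal boundary over a $1$-dimensional cusp $\Upsilon$ is (up to a finite quotient) an $\cE\otimes\Lambda$-torsor over the integral modular curve (by \cite[Thm.~4.1.5]{MP19} together with \S\ref{1-dimAVtorsor}), and the action of the Heisenberg lift $U(\bZ[1/\ell])$ of $\cV(\bZ[1/\ell])$ is characterized by the same translation formula of \cite[Lem.~3.11]{BZ}, the argument of Proposition \ref{onedimbdrycharzero} applies verbatim to any $k$-point $y \in \Upsilon(k)$.

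Next I would conclude from this that $T_{y,\ell}\cap \pi^{-1}(\pi(y))$ contains the set $y + (\cE_{\pi(y)}\otimes\Lambda)[\ell^\infty]$ of translates of $y$ by all $\ell$-power torsion points of $\cE_{\pi(y)}\otimes\Lambda$. Here I am implicitly using that $y$ itself lies in $\pi^{-1}(\pi(y))$, and that the torsion translates of $y$ stay in the same fiber because the Hecke action preserves the projection to the (Hecke-equivariant) modular curve base.

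Finally, I would use the following standard fact: since $\ell \neq p$, the $\ell$-power torsion subgroup of the abelian variety $\cE_{\pi(y)}\otimes\Lambda$ over $k=\overline{\bF}_p$ is Zariski dense. Indeed, its Zariski closure is a closed subgroup scheme whose identity component has finite index in the torsion subgroup, so the identity component must coincide with the whole abelian variety (any proper closed subgroup of an abelian variety has finite-index intersection with each $(\bZ/\ell^n)^{2\dim}$, forcing a contradiction as $n\to\infty$). Translating by $y$ preserves Zariski density inside the torsor $\pi^{-1}(\pi(y))$, and \Cref{onedimbdry} follows.

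The only step that requires genuine care is the first: verifying that Madapusi Pera's extension of the Hecke action along the $1$-dimensional boundary strata agrees, on formal completions, with the action described by Zemel and Bruinier--Zemel in characteristic zero. Once that compatibility is in place, the remainder of the argument is formal. Note that this is exactly where the restriction to $\ell$-power Hecke operators (as opposed to a single prime-to-$p$ Hecke correspondence) is convenient, since it allows us to work inside $U(\bZ[1/\ell])$ and directly exploit the translation formula.
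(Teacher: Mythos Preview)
Your proposal is correct and follows essentially the same route as the paper's proof: reduce to the characteristic-zero statement of Proposition~\ref{onedimbdrycharzero} by invoking Madapusi Pera's description of the extended Hecke action on the formal completion along the $1$-dimensional boundary (\cite[\S 4.1.12]{MP19}), conclude that $T_{y,\ell}$ contains all translates of $y$ by $\ell$-power torsion of $\cE_{\pi(y)}\otimes\Lambda$, and then use Zariski density of $\ell^\infty$-torsion in an abelian variety over $k$. The paper compresses your first step into a footnote, but the argument is the same.
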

\begin{proof}
By \Cref{onedimbdrycharzero} and its proof, the Hecke action of $U(\bZ[1/\ell])$ on $y$ is given by translates of $y$ by $\ell$-power torsion points $\cE_{\pi(y)}\otimes \Lambda$.\footnote{Following the description in \cite[\S 4.1.12]{MP19}, the extension of Hecke translate from characteristic $0$ as the translation action of $\ell$-power torsion points is still the translation action.} Note that $\pi^{-1}(\pi(y))\simeq \cE_{\pi(y)}\otimes \Lambda$ as varieties over $k$ (this isomorphism is non-canonical) and thus the union of the translates of $\ell$-power torsion points is Zariski dense in $\pi^{-1}(\pi(y))$ since the set of $\ell$-power torsion points of an abelian variety over $k$ is Zariski dense.
\end{proof}

\begin{corollary}\label{pf_Hecke-1dim}
Recall that $x\in \Sh_{\bF_p}(k)$ ordinary and assume that $b\geq 3$. If $\overline{T_x}^\BB$ contains a $k$-point which lies on a $1$-dimensional cusp of $\Sh_{\bF_p}^\BB$. Then
either (1) $\overline{T_x}=\Sh_k$ or (2) $\dim_k \overline{T_x}^\BB\setminus\overline{T_x}=0$ and $\dim \overline{T_x}^\BB\geq 2$.
\end{corollary}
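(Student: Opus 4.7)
My plan is to lift $y$ to the toroidal compactification, use \Cref{onedimbdry} to see that an entire positive-dimensional abelian fiber of $\pi$ lies in $\overline{T_x}^{\tor}$, and then dichotomize on the Hecke orbit of that fiber's image on the modular curve $\Upsilon$.

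Concretely, since $\pi\colon \Sh^{\tor}_k\to \Sh^\BB_k$ is proper with $\pi(\overline{T_x}^{\tor})=\overline{T_x}^\BB$, I would pick $\widetilde y\in \overline{T_x}^{\tor}(k)\cap \pi^{-1}(\Upsilon)$; as $\pi(\widetilde y)$ lies on a $1$-dimensional (not $0$-dimensional) cusp we have $\widetilde y\in \overline{T_x}^{\tor,1}$, which is stable under the prime-to-$p$ Hecke action of $G(\bA^p_f)$ by \S\ref{Hecke-1dim}. Applying \Cref{onedimbdry} then gives that the $\ell$-power Hecke orbit of $\widetilde y$ is Zariski dense in the fiber $\pi^{-1}(\pi(\widetilde y))$, so taking the closure yields $\pi^{-1}(\pi(\widetilde y))\subset \overline{T_x}^{\tor}$. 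By \S\ref{1-dimAVtorsor} this fiber is, up to a finite quotient, an abelian variety of positive dimension for $b\geq 3$. Since $\overline{T_x}$ is open and dense in $\overline{T_x}^{\tor}$, every irreducible component of $\overline{T_x}^{\tor}$ meets the interior $\Sh_k$; letting $Z$ denote the irreducible component containing $\pi^{-1}(\pi(\widetilde y))$, the inclusion $\pi^{-1}(\pi(\widetilde y))\subsetneq Z$ is strict, which yields
\[
\dim \overline{T_x}^\BB \;=\; \dim \overline{T_x}^{\tor}\;\geq\;\dim\pi^{-1}(\pi(\widetilde y))+1\;\geq\;2.
\]

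Next I would analyze the prime-to-$p$ Hecke orbit of $\pi(\widetilde y)$ on $\Upsilon$ arising from the $\mathrm{GL}_2$-Levi of the parabolic stabilizing $I$; by \cite[\S 4.1.12]{MP19} combined with the identification of $\Upsilon$ as a modular curve in \cite[\S 4]{Zemel}, this is the standard Hecke action on a characteristic-$p$ modular curve, so the orbit is either Zariski dense (ordinary case) or finite (supersingular case). In the dense case, Hecke-stability gives $\Upsilon\subset \overline{T_x}^\BB$, and running the fiber argument at every $k$-point of $\Upsilon$ yields $\pi^{-1}(\Upsilon)\subset\overline{T_x}^{\tor}$. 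Any irreducible component $D$ of the boundary divisor $\pi^{-1}(\Upsilon)$ then sits in some irreducible component $Z'$ of $\overline{T_x}^{\tor}$, which also meets the interior $\Sh_k$, so $D\subsetneq Z'$, forcing $\dim Z'\geq b$ and hence $Z'=\Sh^{\tor}_k$ by irreducibility of $\Sh^{\tor}_k$; this gives $\overline{T_x}=\Sh_k$, i.e.\ conclusion (1). Otherwise the finite-orbit alternative must hold at every $k$-point of $\overline{T_x}^\BB$ on any $1$-dimensional cusp (else the dense case applies and we are done), so $\overline{T_x}^\BB$ meets each $1$-dimensional cusp in finitely many points; combined with the $0$-dimensional cusps this gives $\dim_k(\overline{T_x}^\BB\setminus \overline{T_x})=0$, which together with the dimension bound above is conclusion (2).

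The main obstacle will be verifying that the ambient $G(\bA^p_f)$-Hecke action restricts on $\Upsilon$ to the standard modular-curve Hecke action, so that the density dichotomy is actually at my disposal; this should follow from the boundary Hecke description in \cite[\S 4.1.12]{MP19} together with the Shimura-theoretic identification of $\Upsilon$ in \cite[\S 4]{Zemel}, but requires some bookkeeping with the Levi decomposition of the parabolic and the mixed Shimura datum attached to $I$. A smaller technical point is handling possibly reducible $\overline{T_x}^{\tor}$, which is dealt with by passing to the irreducible component containing the relevant boundary fiber, using that $\overline{T_x}$ is open and dense in $\overline{T_x}^{\tor}$ so that every component already meets the interior.
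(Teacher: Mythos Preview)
Your proposal is correct and follows essentially the same route as the paper: lift to the toroidal boundary, apply \Cref{onedimbdry} to get an entire fibre of $\pi$ inside $\overline{T_x}^{\tor}$, and then dichotomize on whether the image of this fibre in $\Upsilon$ has full or zero-dimensional Hecke closure. Two small points are worth noting.

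First, the paper phrases the dichotomy more directly: it observes that $\overline{T_x}^{\BB}\cap\Upsilon$ is a closed $\GL_2(\bA^p_f)$-Hecke-stable subset of the one-dimensional $\Upsilon$, hence is either all of $\Upsilon$ or zero-dimensional. This avoids your case split on ordinary versus supersingular $\pi(\widetilde y)$, and also dissolves your ``main obstacle'': one does not need to check that the ambient Hecke action restricts to the \emph{standard} modular-curve Hecke action, only that Hecke correspondences on $\Sh^{\BB}$ preserve the boundary stratification (so that $\overline{T_x}^{\BB}\cap\Upsilon$ is Hecke-stable), which is part of the general package in \cite{MP19}. Your argument is equivalent but slightly more elaborate.

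Second, in the dense case you conclude $Z'=\Sh^{\tor}_k$ by ``irreducibility of $\Sh^{\tor}_k$'', but $\Sh_k$ need not be geometrically irreducible for the given level. The paper handles this by noting that $\dim Z'\ge b$ forces $Z'$ to be a connected component of $\Sh^{\tor}_k$, and then uses that $G(\bA^p_f)$ acts transitively on $\pi_0(\Sh_k)$ to obtain $\overline{T_x}=\Sh_k$.
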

\begin{proof}
Since $\overline{T_x}^\BB$ is stable under Hecke action, then for any $1$-dimensional cusp $\Upsilon$, we have that  $\overline{T_x}^\BB\cap \Upsilon$ is stable under the Hecke action of $\GL_2(\bA^p_f)$ on $\Upsilon$. Thus $\overline{T_x}^\BB\cap \Upsilon=\Upsilon$ or $\dim_k \overline{T_x}^\BB\cap \Upsilon=0$. 

If there exists an $\Upsilon$ such that $\overline{T_x}^\BB\cap \Upsilon=\Upsilon$, then $\pi(\overline{T_x}^{\tor})=\overline{T_x}^\BB\supset \Upsilon$.
By \Cref{onedimbdry} and \S\ref{Hecke-1dim}, we have that $\overline{T_x}^{\tor}\supset \pi^{-1}(\Upsilon)$ and thus $\dim_k \overline{T_x}\geq \dim_k \pi^{-1}(\Upsilon)=\dim \Sh_k$. Moreover, since the $G(\bA^p_f)$-action transitively on $\pi_0(\Sh_k)$ (by the definition of canonical integral models and \cite[Lem.~2.2.5]{Kisin}), then $\overline{T_x}=\Sh_k$.

If for any $1$-dimensional cusp $\Upsilon$, we have $\dim_k \overline{T_x}^\BB\cap \Upsilon=0$, then $\dim_k \overline{T_x}^\BB\setminus\overline{T_x}=0$. On the other hand, by the assumption, there exists $y'\in \Upsilon(k)$ for some $\Upsilon$ such that $y'\in \overline{T_x}^\BB$; then there exists $y\in \pi^{-1}(\Upsilon)(k)$ such that $y\in \overline{T_x}^{\tor}$ and $\pi(y)=y'$. By \Cref{onedimbdry}, we have $\dim_k \overline{T_x}\geq 1+ \overline{T_{y,\ell}}=b-1\geq 2$. Thus we conclude that (2) holds.
\end{proof}

\subsection*{Proof of the Hecke orbit conjecture}
We first recall some results on Hecke orbits which we will need. As the results and their proofs are standard, we will content ourselves with only a sketch of their proofs.

\begin{lemma}\label{basichecke}
Let $f:Sh_1 \rightarrow Sh_2$ be a morphism of Shimura varieties of Hodge type with hyperspecial level at $p$ and let $G_i, i=1,2$ denote the reductive group of $Sh_i$. Let $\Sh_i$ denote the canonical integral model of $Sh_i$ and then $f$ extends naturally as $f:\Sh_1\rightarrow \Sh_2$. 
 Let $X\subset \Sh_{2,k}$ be a subvariety that intersects the ordinary locus (here we assume that the ordinary locus in $\Sh_{2,k}$ is not empty), and let $\overline{T_X}$ denote the Zariski closure of the Hecke orbit $T_X$ of $X$ with respect to the Hecke action by $G_2(\bA^p_f)$. Then
\begin{enumerate}
%
    \item for any Shimura subvariety $Z\subset Sh_2$, we have that $\overline{T_X}\subset \Sh_{2,k}$ is not contained in $\cZ_k$, where $\cZ$ denotes the Zariski closure of $Z$ in $\Sh_2$;
    \item $f^{-1}(\overline{T_X}) \cap \Sh_{1,k}$ is stable under the Hecke action of $G_1(\bA^p_f)$ on $\Sh_{1,k}$. 
%
\end{enumerate}
\end{lemma}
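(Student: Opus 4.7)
For (2), the approach is to invoke compatibility of Hecke correspondences under $f$. Since $f$ is induced by a morphism of Shimura data with underlying reductive-group homomorphism $\iota: G_1 \to G_2$, for each $g \in G_1(\bA^p_f)$ the Hecke correspondence on $\Sh_1$ attached to $g$ maps under $f$ to the one on $\Sh_2$ attached to $\iota(g)$; on $k$-points this yields the identity $f(g \cdot y) = \iota(g) \cdot f(y)$. Given $y \in f^{-1}(\overline{T_X}) \cap \Sh_{1,k}$ and $g \in G_1(\bA^p_f)$, we then have $f(g \cdot y) = \iota(g) \cdot f(y) \in \iota(g) \cdot \overline{T_X} \subset \overline{T_X}$ by the $G_2(\bA^p_f)$-stability of $\overline{T_X}$, so $g \cdot y \in f^{-1}(\overline{T_X})$. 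Upgrading this from the level of $k$-points to the level of closed subschemes is routine, using the Hecke correspondences as finite algebraic maps between finite-level models.

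For (1), I plan to argue by contradiction, assuming throughout that $Z \subsetneq Sh_2$ is a proper sub-Shimura variety (the case $Z = Sh_2$ being vacuous). Suppose $\overline{T_X} \subset \cZ_k$. Since $\overline{T_X}$ is $G_2(\bA^p_f)$-stable, every Hecke translate $g \cdot X$ lies in $\cZ_k$, so $X$ is contained in the Hecke-invariant closed subset
\[ Y_0 := \bigcap_{g \in G_2(\bA^p_f)} g^{-1} \cZ_k \subset \Sh_{2,k}. \]
The goal is then to show that $Y_0$ cannot contain an ordinary point, contradicting the hypothesis that $X$ meets the ordinary locus.

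To rule out ordinary points in $Y_0$, I would pick an ordinary $x \in X(k)$ and lift it to a canonical lift $\tilde x \in \Sh_2(W(k))$, using the fact that canonical lifts on the ordinary locus of a Hodge-type Shimura variety with hyperspecial level are compatible with prime-to-$p$ Hecke correspondences. If every $g \cdot x$ lies in $\cZ_k$, then by flatness of $\cZ$ over $\bZ_{(p)}$ and the uniqueness of canonical lifts, the translates $g \cdot \tilde x$ lie in $\cZ(W(k))$, so $\tilde x$ together with all its $G_2(\bA^p_f)$-translates lies in $Z_{\overline{\bQ}_p}$. By the structure theory of sub-Shimura varieties, the intersection $\bigcap_g g^{-1} Z$ is attached to the normal core of the reductive group of $Z$ inside $G_2$; since $Z$ is a proper sub, this normal core lies in the center of $G_2^{\ad}$, forcing the intersection to consist of special points only. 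This contradicts the genericity of the canonical lift of a non-CM ordinary point; one avoids the degenerate case where every ordinary point of $X$ is CM by replacing $X$ with a nearby Hecke translate, noting that CM points form a countable subset of $\Sh_{2,k}(k)$ while $k = \overline{\bF}_p$ is uncountable.

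The main obstacle I foresee is rigorously establishing the Hecke-compatibility of canonical lifts in the general Hodge-type setting (rather than just $\cA_g$, where Serre--Tate theory gives it cleanly); once this compatibility is in hand, the normal-core calculation reduces the characteristic-$p$ statement to a purely group-theoretic one in characteristic zero. An alternative route, which sidesteps canonical lifts entirely, is to combine a monodromy argument on the isocrystal attached to $\bbH_\cris$ with the observation that any Hecke-stable closed subvariety $Y_0 \subsetneq \Sh_{2,k}$ containing an ordinary point would force the monodromy of $\bbH_\cris|_{Y_0}$ to be contained in the reductive group of $Z$, contradicting the known big monodromy on the ordinary locus.
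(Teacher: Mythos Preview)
Your argument for (2) is the same as the paper's: both push the Hecke correspondence on $\Sh_1$ attached to $g$ forward via $f$ to the one on $\Sh_2$ attached to $\iota(g)=f(g)$, and then use $G_2(\bA^p_f)$-stability of $\overline{T_X}$.

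For (1), the paper takes essentially your \emph{alternative} route, but with $\ell$-adic rather than crystalline monodromy: since $\overline{T_X}$ is Hecke-stable, the $\ell$-adic monodromy of the relative $H^1_{\ell,\et}$ of the universal abelian scheme over $\overline{T_X}$ must be Zariski-dense in $G_2(\bQ_\ell)$, whereas over $\cZ_k$ it is contained in the reductive group of $Z$, a proper algebraic subgroup. That single comparison finishes the proof. Using $\ell$-adic monodromy here is cleaner than crystalline because the big-monodromy input (normality of the monodromy group under the Hecke action forcing it to contain $G_2^{\der}$) is standard on the $\ell$-adic side.

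Your main route via canonical lifts is considerably longer, and the step you call routine---``by flatness of $\cZ$ and uniqueness of canonical lifts, $g\cdot\tilde x\in\cZ(W(k))$''---has a genuine gap. Flatness of $\cZ$ over $\bZ_{(p)}$ does not by itself guarantee that the Serre--Tate canonical lift, taken in $\Sh_2$, of an ordinary point of $\cZ_k$ lands back in $\cZ$; for that one would need $x$ to lie in the $\mu$-ordinary locus of $Z$ itself, which need not coincide with the restriction to $\cZ_k$ of the ordinary locus of $\Sh_{2,k}$. The normal-core description of $\bigcap_g g^{-1}Z$ is also not quite right as stated: the Hecke translate $g^{-1}Z$ is not a sub-Shimura variety attached to a $G_2(\bQ)$-conjugate of the group of $Z$, so the passage to a group-theoretic intersection needs more care. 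The monodromy argument sidesteps both issues in one line.
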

\begin{proof}
\begin{enumerate}
%
    \item The $\ell$-adic monodromy of the $\ell$-adic lisse sheaf given by the relative $H^1_{\ell, \text{\'et}}$ of the universal abelian variety restricted to any Hecke-stable subvariety in  must be Zariski-dense in $G_2(\bQ_{\ell})$. Note that since $T_X$ is Hecke-stable, then $\overline{T_X}$ is Hecke-stable as all Hecke correspondences are algebraic. It then follows that $\overline{T_X}$ is not contained in any $\cZ_k$ since the $\ell$-adic monodromy of the family of abelian varieties over $Z$ is contained in the reductive group associated to $Z$, which is a proper algebraic subgroup of $G_2(\bQ_\ell)$.
    
    \item Note that $\overline{T_X}$ is stable under $G_2(\bA^p_f)$, then it suffices to prove that for any $x\in \Sh_1(k)$ and for any $g\in G_1(\bA^p_f)$, if $x'\in g.x$, then there exists $g'\in G_2(\bA^p_f)$ such that $f(x')\in g'.f(x)$.
    Indeed, we may take $g'=f(g)$, where we view $f:G_1\rightarrow G_2$, and the desired property follows from the definition of Hecke actions via the extension property of canonical integral models given in \cite[Thm.~2.3.8]{Kisin}. \qedhere
%
\end{enumerate}
\end{proof}

\begin{proof}[Proof of \Cref{thm_Hecke} orthogonal case]
We will induct on $\dim_k\Sh_k=b$. When $b=1$, $\Sh_k$ is a curve and since the prime-to-$p$ Hecke orbit of an ordinary point is infinite, and thus the base case is verified. 

Now assume that \Cref{thm_Hecke} holds for all ordinary points in the special fiber of the canonical integral model of GSpin Shimura varieties of dimension $b-1$ with hyperspecial level. Consider $x\in \Sh(k)$ ordinary; since the prime-to-$p$ Hecke orbit $T_x$ of $x$ is infinite and hence its Zariski closure $\overline{T_x} \subset \Sh_k$ is at least $1$-dimensional, and by definition, $\overline{T_x}$ is generically ordinary. If the Zariski closure $\overline{T_x}^\BB$ in $\Sh^\BB_k$ contains a $0$-dimensional cusp, then the theorem follows directly from \Cref{pf_dim0} since $\overline{T_x}^\BB\cap \Sh_k\neq \emptyset$. If $\overline{T_x}^\BB$ in $\Sh^\BB_k$ contains a point in a $1$-dimensional cusp, then by \Cref{pf_Hecke-1dim}, Case (1) is done and we may assume that we are in Case (2). In other words, we remain to prove the theorem for Case (2) in \Cref{pf_Hecke-1dim} and the case when $\overline{T_x}^\BB=\overline{T_x}$.

By \Cref{basichecke}(1), we have that $\overline{T_x}\not\subset Z(m)$ for any $m$ since all $Z(m)$ are (finite unions of) special fibers of the Zariski closure of proper Shimura subvarieties of $Sh$ in $\Sh$ (here we use the fact that $\cZ(m)$ are all flat). For Case (2), since there is always a proper curve in a projective variety of dimension at least $2$ 
avoiding finitely many points, 
then we may always find a proper curve $C'$ in $\overline{T_x}$. On the other hand, when $\overline{T_x}^\BB=\overline{T_x}$, since $\dim_k \overline{T_x}\geq 1$, we may also find a proper curve in $\overline{T_x}$. Since $\overline{T_x}$ is generically ordinary and not contained any $Z(m)$, we may always choose a proper curve $C'$ such that $C'$ is generically ordinary and $C'\not\subset Z(m)$ for any $m$. Then by \Cref{thm_int} applying to the normalization $C$ of $C'$ with the natural map $C\rightarrow C'\rightarrow \Sh_k$, there exists an ordinary point $x'$ on $C'\subset \overline{T_x}$ such that $x'\in Z(m)(k)$ for some $p\nmid m$ representable by $(L,Q)$, as there are only finitely many non-ordinary points on $C'$.

Let $\Sh'\subset \cZ(m)$ denote the canonical integral model of the Shimura subvariety of $\Sh$ which consists some irreducible components of $\cZ(m)$ and $x'\in \Sh'(k)$. Note that since $p\nmid m$, $\Sh'$ has hyperspecial level at $p$ and $\dim_k \Sh'_k=b-1$.
By \Cref{basichecke}(2), $\overline{T_x}\cap \Sh'_k$ is a generically ordinary Hecke-stable subvariety of $\Sh'_k$. Then by the inductive hypothesis, we have that $\overline{T_x} \cap \Sh'_k = \Sh'_k$, and thus $\Sh'_k \subset \overline{T_x}$. In fact, an identical argument yields that $Z'(m) \subset \overline{T_x}$ for infinitely many $m$, where $Z'(m)$ is some irreducible component of $Z(m)$; indeed, if there were only finitely many such $Z'(m)$, they only intersect $C$ at finitely many $k$-points and we may always pick $x'$ different from these finitely many points when we apply \Cref{thm_int}. Since the Zariski closure of infinitely many distinct subvarieties of dimension $b-1$ must be at least $b$-dimensional, we conclude that $\overline{T_x}$ must contain at least one irreducible component of $\Sh_k$. Moreover, since the Hecke action $G(\bA^p_f)$ on $\Sh_k$ permutes all its irreducible/connected components, we conclude that $\overline{T_x}=\Sh_k$.
\end{proof}

\begin{remark}\label{rmk_unitary}
By \cite[\S 9.3]{SSTT}, as a direct consequence of \Cref{thm_int}, we have that \Cref{thm_int} still holds for $\Sh_k$ being the $\bmod\, \fp$ special fiber of the canonical integral model over $\Spec \cO_{K,(\fp)}$ of the PEL type unitary Shimura variety considered in \cite[\S 2.1]{BHKRY} and \cite[\S 2.1, Notation 2.6]{KR14} with the special divisors described in \cite[\S 2.5]{BHKRY} and \cite[\S 2.2, Def.~2.8]{KR14}, where $\fp\mid p$ and $p$ splits in $K/\bQ$. Therefore, we adapt the above inductive proof for the orthogonal case to the unitary case if $\overline{T_x}$ is proper; thus to finish the proof of \Cref{thm_Hecke} unitary case, it remains to treat the case when $\overline{T_x}^\BB$ hits the boundary of $\Sh^\BB_k$. 

The arithmetic compactifications of $\Sh$ are described in \cite[\S 3]{BHKRY}. More precisely, by \cite[Thm.~3.7.1, Prop.~3.4.4]{BHKRY}, the boundary components of $\Sh^\BB$ are $0$-dimensional (relative to $\Spec \cO_{K,\fp}$); the toroidal compactification $\Sh^{\tor}$ is canonical and the fibers over the cusps of $\Sh^{\tor}\rightarrow \Sh^\BB$ are abelian schemes and each of these abelian schemes, up to quotient by a finite group, is isomorphic (over some finite extension of $\cO_K$) to $E\otimes_{\cO_K}\Lambda_0$, where $E$ is an elliptic curve CM by $\cO_K$ and $\Lambda_0$ is an $\cO_K$-lattice of rank $n-1$. Since $\Sh^{\tor}$ is canonical, the Hecke action $G(\bA^p_f)$ (here $G$ denotes the reductive group associated to $\Sh$) extends to $\Sh^{\tor}$. For each cusp in $\Sh^\BB$, we may choose an isotropic line $J\subset W$, where $W$ is the Hermitian space over $K$ of signature $(n,1)$ used to define $\Sh$. The admissible parabolic associated to the cusp is the stabilizer of $J$ and by \cite[\S 3.3, p.~673]{Howard}, the $\bZ[1/\ell]$-points of the unipotent part of this parabolic acts on $E\otimes_{\cO_K}\Lambda_0$ by translations of $\ell$-power torsion points and thus we prove the analogous statement of \Cref{onedimbdrycharzero} for the unitary case. Therefore we prove the unitary case of \Cref{thm_Hecke} by the proof of \Cref{pf_Hecke-1dim}.
\end{remark}

\begin{bibdiv}
\begin{biblist}

\bibselect{bib}

\end{biblist}
\end{bibdiv}

\end{document}